\newtheorem{theorem}{Theorem}
\theoremstyle{plain}
\newtheorem{corollary}{Corollary}
\newtheorem{definition}{Definition}
\newtheorem{remark}{Remark}
\numberwithin{equation}{section}
\begin{document}
\title[Two Capacities and Thinness in Weighted Variable Exponent Sobolev
Spaces]{On Some Properties of Relative Capacity and Thinness in Weighted
Variable Exponent Sobolev Spaces}
\author{CIHAN UNAL}
\address{Sinop University\\
Faculty of Arts and Sciences\\
Department of Mathematics}
\email{cihanunal88@gmail.com}
\urladdr{}
\thanks{}
\author{ISMAIL AYDIN}
\address{Sinop University\\
Faculty of Arts and Sciences\\
Department of Mathematics}
\email{iaydin@sinop.edu.tr}
\urladdr{}
\thanks{}
\subjclass[2000]{Primary 32U20, 31B15; Secondary 46E35, 43A15}
\keywords{Weighted variable exponent Sobolev spaces, Relative capacity,
Sobolev capacity, Thinness}
\dedicatory{}
\thanks{}

\begin{abstract}
In this paper, we define weighted relative $p(.)$-capacity and discuss
properties of capacity in the space $W_{\vartheta }^{1,p(.)}(%
%TCIMACRO{\U{211d} }%
%BeginExpansion
\mathbb{R}
%EndExpansion
^{n}).$ Also, we investigate some properties of weighted variable Sobolev
capacity. It is shown that there is a relation between these two capacities.
Moreover, we introduce a thinness in sense to this new defined relative
capacity and prove an equivalence statement for this thinness.
\end{abstract}

\maketitle

\section{Introduction}

Kov\'{a}\v{c}ik and R\'{a}kosn\'{\i}k \cite{K} introduced the variable
exponent Lebesgue space $L^{p\left( .\right) }(%
%TCIMACRO{\U{211d} }%
%BeginExpansion
\mathbb{R}
%EndExpansion
^{n})$ and the Sobolev space $W^{k,p(.)}\left( 
%TCIMACRO{\U{211d} }%
%BeginExpansion
\mathbb{R}
%EndExpansion
^{n}\right) $. The boundedness of the maximal operator was an open problem
in $L^{p\left( .\right) }(%
%TCIMACRO{\U{211d} }%
%BeginExpansion
\mathbb{R}
%EndExpansion
^{n})$ for a long time. Diening \cite{Dien} proved the first time this state
over bounded domains if $p\left( .\right) $ satisfies locally log-H\"{o}lder
continuous condition, that is,%
\begin{equation*}
\left\vert p\left( x\right) -p\left( y\right) \right\vert \leq \frac{C}{-\ln
\left\vert x-y\right\vert }\text{, }x,y\in \Omega ,\text{ }\left\vert
x-y\right\vert \leq \frac{1}{2}
\end{equation*}%
where $\Omega $ is a bounded domain. We denote by $P^{\log }\left( 
%TCIMACRO{\U{211d} }%
%BeginExpansion
\mathbb{R}
%EndExpansion
^{n}\right) $ the class of variable exponents which satisfy the log-H\"{o}%
lder continuous condition. Diening later extended the result to unbounded
domains by supposing, in addition, that the exponent $p\left( .\right) =p$
is a constant function outside a large ball. After this study, many
absorbing and crucial papers appeared in non-weighted and weighted variable
exponent spaces. For a historical journey, we refer \cite{Dien2}, \cite{Fan}%
, \cite{K}, \cite{Mus} and \cite{Sam}. Sobolev capacity for constant
exponent spaces has found a great number of uses, see \cite{Eva} and \cite%
{Maz}. Moreover, the weighted Sobolev capacity was revealed by Kilpel\"{a}%
inen \cite{Kil}. He investigated the role of capacity in the pointwise
definition of functions in Sobolev spaces involving weights of Muckenhoupt's 
$A_{p}-$class. Harjulehto et al. \cite{Har2} introduced variable Sobolev
capacity in the spaces $W^{1,p(.)}\left( 
%TCIMACRO{\U{211d} }%
%BeginExpansion
\mathbb{R}
%EndExpansion
^{n}\right) .$ Also, Ayd\i n \cite{A2} generalized some results of the
variable Sobolev capacity to the weighted variable exponent case.

The variational capacity has been used extensively in nonlinear potential
theory on $%
%TCIMACRO{\U{211d} }%
%BeginExpansion
\mathbb{R}
%EndExpansion
^{n}$. Let \ $\Omega \subset 
%TCIMACRO{\U{211d} }%
%BeginExpansion
\mathbb{R}
%EndExpansion
^{n}$ is open and $K\subset \Omega $ is compact. Then the relative
variational $p$-capacity is defined by%
\begin{equation*}
\text{cap}_{p}\left( K,\Omega \right) =\inf_{f}\dint\limits_{\Omega
}\left\vert \bigtriangledown f\left( x\right) \right\vert ^{p}dx\text{,}
\end{equation*}%
where the infimum is taken over smooth and zero boundary valued functions $f$
in $\Omega $ such that $f\geq 1$ in $K.$ The set of admissible functions $f$
can be replaced by the continuous first order Sobolev functions with $f\geq
1 $ in $K.$ The $p$-capacity is a Choquet capacity relative to $\Omega .$
For more details and historical background, see \cite{Hei}. Also, Harjulehto
et al. \cite{Har1} defined a relative capacity. They studied properties of
the capacity and compare it with the Sobolev capacity.

In \cite{Dubinin}, the authors have considered a relative capacity and
relationship with the well-known half-plane capacity. It is known that the
half-plane capacity is a particular case because of its applications in
geometric function theory and stochastic processes. Also, they proved some
properties of defined relative capacity such as the behavior of this
capacity under various forms of symmetrization and under some other
geometric transformations. Moreover, they investigated some applications to
bounded holomorphic functions of the unit disk.

Our purpose is to investigate some properties of the Sobolev capacity and,
also, relative $p(.)$-capacity in sense to Harjulehto et al. \cite{Har1} to
the weighted variable exponent case. Also, we give relationship between
these defined two capacities. Moreover, we present a thinness in sense to
this new defined relative capacity and prove an equivalence statement for
this thinness.

\section{Notation and Preliminaries}

In this paper, we will work on $%
%TCIMACRO{\U{211d} }%
%BeginExpansion
\mathbb{R}
%EndExpansion
^{n}$ with Lebesgue measure $dx$. The measure $\mu $ is doubling if there is
a fixed constant $c_{d}\geq 1,$ called the doubling constant of $\mu $ such
that%
\begin{equation*}
\mu \left( B\left( x_{0},2r\right) \right) \leq c_{d}\mu \left( B\left(
x_{0},r\right) \right)
\end{equation*}%
for every ball $B\left( x_{0},r\right) $ in $%
%TCIMACRO{\U{211d} }%
%BeginExpansion
\mathbb{R}
%EndExpansion
^{n}.$ Also, the elements of the space $C_{0}^{\infty }(%
%TCIMACRO{\U{211d} }%
%BeginExpansion
\mathbb{R}
%EndExpansion
^{n})$ are the infinitely differentiable functions with compact support. We
denote the family of all measurable functions $p(.):%
%TCIMACRO{\U{211d} }%
%BeginExpansion
\mathbb{R}
%EndExpansion
^{n}\rightarrow \lbrack 1,\infty )$ (called the variable exponent on $%
%TCIMACRO{\U{211d} }%
%BeginExpansion
\mathbb{R}
%EndExpansion
^{n}$) by the symbol $\mathcal{P}\left( 
%TCIMACRO{\U{211d} }%
%BeginExpansion
\mathbb{R}
%EndExpansion
^{n}\right) $. In this paper, the function $p(.)$ always denotes a variable
exponent. For $p(.)\in \mathcal{P}\left( 
%TCIMACRO{\U{211d} }%
%BeginExpansion
\mathbb{R}
%EndExpansion
^{n}\right) ,$ put 
\begin{equation*}
p^{-}=\underset{x\in 
%TCIMACRO{\U{211d} }%
%BeginExpansion
\mathbb{R}
%EndExpansion
^{n}}{\text{ess inf}}p(x)\text{, \ \ \ \ \ \ }p^{+}=\underset{x\in 
%TCIMACRO{\U{211d} }%
%BeginExpansion
\mathbb{R}
%EndExpansion
^{n}}{\text{ess sup}}p(x)\text{.}
\end{equation*}

A positive, measurable and locally integrable function $\vartheta :%
%TCIMACRO{\U{211d} }%
%BeginExpansion
\mathbb{R}
%EndExpansion
^{n}\rightarrow \left( 0,\infty \right) $ is called a weight function. The
weighted modular is defined by 
\begin{equation*}
\rho _{p(.),\vartheta }(f)=\dint\limits_{%
%TCIMACRO{\U{211d} }%
%BeginExpansion
\mathbb{R}
%EndExpansion
^{n}}\left\vert f(x)\right\vert ^{p(x)}\vartheta \left( x\right) dx\text{.}
\end{equation*}%
The weighted variable exponent Lebesgue spaces $L_{\vartheta }^{p(.)}(%
%TCIMACRO{\U{211d} }%
%BeginExpansion
\mathbb{R}
%EndExpansion
^{n})$ consist of all measurable functions $f$ on $%
%TCIMACRO{\U{211d} }%
%BeginExpansion
\mathbb{R}
%EndExpansion
^{n}$ endowed with the Luxemburg norm%
\begin{equation*}
\left\Vert f\right\Vert _{p\left( .\right) ,\vartheta }=\inf \left\{ \lambda
>0:\dint\limits_{%
%TCIMACRO{\U{211d} }%
%BeginExpansion
\mathbb{R}
%EndExpansion
^{n}}\left\vert \frac{f(x)}{\lambda }\right\vert ^{p(x)}\vartheta \left(
x\right) dx\leq 1\right\} .
\end{equation*}%
When $\vartheta \left( x\right) =1,$ the space $L_{\vartheta }^{p(.)}(%
%TCIMACRO{\U{211d} }%
%BeginExpansion
\mathbb{R}
%EndExpansion
^{n})$ is the variable exponent Lebesgue space. The space $L_{\vartheta
}^{p(.)}(%
%TCIMACRO{\U{211d} }%
%BeginExpansion
\mathbb{R}
%EndExpansion
^{n})$ is a Banach space with respect to $\left\Vert .\right\Vert
_{p(.),\vartheta }.$ Also, some basic properties of this space were
investigated in \cite{A1}, \cite{A2}, \cite{Kok}.

Let $\Omega \subset 
%TCIMACRO{\U{211d} }%
%BeginExpansion
\mathbb{R}
%EndExpansion
^{n}$ is bounded and $\vartheta $ is a weight function$.$ It is known that a
function $f\in C_{0}^{\infty }\left( \Omega \right) $ satisfy Poincar\'{e}
inequality in $L_{\vartheta }^{1}(\Omega )$ if and only if there is a
constant $c>0$ such that the inequality%
\begin{equation*}
\dint\limits_{\Omega }\left\vert f(x)\right\vert \vartheta \left( x\right)
dx\leq c\left( \text{diam }\Omega \right) \dint\limits_{\Omega }\left\vert
\nabla f(x)\right\vert \vartheta \left( x\right) dx
\end{equation*}%
holds \cite{Hei}.

In recent decades, variable exponent Lebesgue spaces $L^{p(.)}$ and the
corresponding the variable exponent Sobolev spaces $W^{k,p(.)}$ have
attracted more and more attention. Let $1<p^{-}\leq p\left( .\right) \leq
p^{+}<\infty $ and $k\in 
%TCIMACRO{\U{2115} }%
%BeginExpansion
\mathbb{N}
%EndExpansion
.$ The variable exponent Sobolev spaces $W^{k,p(.)}\left( 
%TCIMACRO{\U{211d} }%
%BeginExpansion
\mathbb{R}
%EndExpansion
^{n}\right) $ consist of all measurable functions $f\in L^{p(.)}(%
%TCIMACRO{\U{211d} }%
%BeginExpansion
\mathbb{R}
%EndExpansion
^{n})$ such that the distributional derivatives $D^{\alpha }f$ are in $%
L^{p(.)}(%
%TCIMACRO{\U{211d} }%
%BeginExpansion
\mathbb{R}
%EndExpansion
^{n})$ for all $0\leq \left\vert \alpha \right\vert \leq k$ where $\alpha
\in 
%TCIMACRO{\U{2115} }%
%BeginExpansion
\mathbb{N}
%EndExpansion
_{0}^{n}$ is a multiindex, $\left\vert \alpha \right\vert =\alpha
_{1}+\alpha _{2}+...+\alpha _{n},$ and $D^{\alpha }=\frac{\partial
^{\left\vert \alpha \right\vert }}{\partial _{x_{1}}^{\alpha _{1}}\partial
_{x_{2}}^{\alpha _{2}}...\partial _{x_{n}}^{\alpha _{n}}}.$ The spaces $%
W^{k,p(.)}\left( 
%TCIMACRO{\U{211d} }%
%BeginExpansion
\mathbb{R}
%EndExpansion
^{n}\right) $ are a special class of so-called generalized Orlicz-Sobolev
spaces with the norm%
\begin{equation*}
\left\Vert f\right\Vert _{k,p\left( .\right) }=\sum_{0\leq \left\vert \alpha
\right\vert \leq k}\left\Vert D^{\alpha }f\right\Vert _{p\left( .\right) }.
\end{equation*}

We set the weighted variable exponent Sobolev spaces $W_{\vartheta
}^{k,p\left( .\right) }\left( 
%TCIMACRO{\U{211d} }%
%BeginExpansion
\mathbb{R}
%EndExpansion
^{n}\right) $ by

\begin{equation*}
W_{\vartheta }^{k,p(.)}(%
%TCIMACRO{\U{211d} }%
%BeginExpansion
\mathbb{R}
%EndExpansion
^{n})=\left\{ f\in L_{\vartheta }^{p\left( .\right) }\left( 
%TCIMACRO{\U{211d} }%
%BeginExpansion
\mathbb{R}
%EndExpansion
^{n}\right) :D^{\alpha }f\in L_{\vartheta }^{p(.)}(%
%TCIMACRO{\U{211d} }%
%BeginExpansion
\mathbb{R}
%EndExpansion
^{n}),0\leq \left\vert \alpha \right\vert \leq k\right\}
\end{equation*}%
equipped with the norm

\begin{equation*}
\left\Vert f\right\Vert _{k,p\left( .\right) ,\vartheta }=\sum_{0\leq
\left\vert \alpha \right\vert \leq k}\left\Vert D^{\alpha }f\right\Vert
_{p\left( .\right) ,\vartheta }.
\end{equation*}%
It is already known that $W_{\vartheta }^{k,p(.)}(%
%TCIMACRO{\U{211d} }%
%BeginExpansion
\mathbb{R}
%EndExpansion
^{n})$ is a reflexive Banach space.

Now, let $1<p^{-}\leq p\left( .\right) \leq p^{+}<\infty $, $k\in 
%TCIMACRO{\U{2115} }%
%BeginExpansion
\mathbb{N}
%EndExpansion
$ and $\vartheta ^{-\frac{1}{p\left( .\right) -1}}\in L_{loc}^{1}\left( 
%TCIMACRO{\U{211d} }%
%BeginExpansion
\mathbb{R}
%EndExpansion
^{n}\right) .$ Thus, we have $L_{\vartheta }^{p\left( .\right) }\left( 
%TCIMACRO{\U{211d} }%
%BeginExpansion
\mathbb{R}
%EndExpansion
^{n}\right) \hookrightarrow L_{loc}^{1}\left( 
%TCIMACRO{\U{211d} }%
%BeginExpansion
\mathbb{R}
%EndExpansion
^{n}\right) $ and then the weighted variable exponent Sobolev spaces $%
W_{\vartheta }^{k,p\left( .\right) }\left( 
%TCIMACRO{\U{211d} }%
%BeginExpansion
\mathbb{R}
%EndExpansion
^{n}\right) $ is well-defined by [\cite{A2}, Proposition 2.1].

In particular, the space $W_{\vartheta }^{1,p\left( .\right) }\left( 
%TCIMACRO{\U{211d} }%
%BeginExpansion
\mathbb{R}
%EndExpansion
^{n}\right) $ is defined by%
\begin{equation*}
W_{\vartheta }^{1,p(.)}\left( 
%TCIMACRO{\U{211d} }%
%BeginExpansion
\mathbb{R}
%EndExpansion
^{n}\right) =\left\{ f\in L_{\vartheta }^{p\left( .\right) }\left( 
%TCIMACRO{\U{211d} }%
%BeginExpansion
\mathbb{R}
%EndExpansion
^{n}\right) :\left\vert \nabla f\right\vert \in L_{\vartheta }^{p(.)}(%
%TCIMACRO{\U{211d} }%
%BeginExpansion
\mathbb{R}
%EndExpansion
^{n})\right\} .
\end{equation*}%
The function $\rho _{1,p\left( .\right) ,\vartheta }:W_{\vartheta }^{1,p(.)}(%
%TCIMACRO{\U{211d} }%
%BeginExpansion
\mathbb{R}
%EndExpansion
^{n})\longrightarrow \left[ 0,\infty \right) $ is shown as $\rho _{1,p\left(
.\right) ,\vartheta }\left( f\right) =\rho _{p\left( .\right) ,\vartheta
}\left( f\right) +\rho _{p\left( .\right) ,\vartheta }\left( \left\vert
\nabla f\right\vert \right) .$ Also, the norm $\left\Vert f\right\Vert
_{1,p\left( .\right) ,\vartheta }=\left\Vert f\right\Vert _{p\left( .\right)
,\vartheta }+\left\Vert \nabla f\right\Vert _{p\left( .\right) ,\vartheta }$
makes the space $W_{\vartheta }^{1,p\left( .\right) }\left( 
%TCIMACRO{\U{211d} }%
%BeginExpansion
\mathbb{R}
%EndExpansion
^{n}\right) $ a Banach space. The local weighted variable exponent Sobolev
space $W_{\vartheta ,loc}^{1,p\left( .\right) }\left( 
%TCIMACRO{\U{211d} }%
%BeginExpansion
\mathbb{R}
%EndExpansion
^{n}\right) $ is defined in the classical way. More information on the
classic theory of variable exponent spaces can be found in \cite{K}.

As an alternative to the Sobolev $p(.)$- capacity, Harjulehto et al. \cite%
{Har1} introduced relative $p(.)$- capacity. Recall that%
\begin{equation*}
C_{0}(\Omega )=\left\{ f:\Omega \longrightarrow 
%TCIMACRO{\U{211d} }%
%BeginExpansion
\mathbb{R}
%EndExpansion
:f\text{ is continuous and supp}f\subset \Omega \text{ is compact}\right\} ,
\end{equation*}%
where supp$f$ is the support of $f$. Suppose that $K$ is a compact subset of 
$\Omega .$ We denote%
\begin{equation*}
R_{p\left( .\right) }\left( K,\Omega \right) =\left\{ f\in W^{1,p(.)}\left(
\Omega \right) \cap C_{0}\left( \Omega \right) :f\geq 1\text{ on }K\right\}
\end{equation*}%
and define%
\begin{equation*}
\text{cap}_{p\left( .\right) }^{\ast }\left( K,\Omega \right) =\inf_{f\in
R_{p\left( .\right) }\left( K,\Omega \right) }\dint\limits_{\Omega
}\left\vert \bigtriangledown f\left( x\right) \right\vert ^{p\left( x\right)
}dx=\inf_{f\in R_{p\left( .\right) }\left( K,\Omega \right) }\rho _{p\left(
.\right) }\left( \left\vert \bigtriangledown f\right\vert \right) .
\end{equation*}%
Further, if $U\subset \Omega $ is open, then%
\begin{equation*}
\text{cap}_{p\left( .\right) }\left( U,\Omega \right) =\sup_{\substack{ %
K\subset U  \\ compact}}\text{cap}_{p\left( .\right) }^{\ast }\left(
K,\Omega \right) ,
\end{equation*}%
and for an arbitrary set $E\subset \Omega $%
\begin{equation*}
\text{cap}_{p\left( .\right) }\left( E,\Omega \right) =\inf_{_{\substack{ %
E\subset U\subset \Omega  \\ U\text{ open}}}}\text{cap}_{p\left( .\right)
}\left( U,\Omega \right) .
\end{equation*}%
The number cap$_{p\left( .\right) }\left( E,\Omega \right) $ is called the
variational $p(.)$- capacity of $E$ relative to $\Omega .$ It is usually
called simply the relative $p(.)$- capacity of the pair or condenser $\left(
E,\Omega \right) .$

Throughout this paper, we assume that $p\left( .\right) \in P^{\log }\left( 
%TCIMACRO{\U{211d} }%
%BeginExpansion
\mathbb{R}
%EndExpansion
^{n}\right) $ with $1<p^{-}\leq p\left( .\right) \leq p^{+}<\infty $ and $%
\vartheta ^{-\frac{1}{p\left( .\right) -1}}\in L_{loc}^{1}\left( 
%TCIMACRO{\U{211d} }%
%BeginExpansion
\mathbb{R}
%EndExpansion
^{n}\right) .$ We write that $a\approx b$ for two quantities if there exists
positive constants $c_{1},c_{2}$ such that $c_{1}a\leq b\leq c_{2}a.$ Also,
we will denote%
\begin{equation*}
\mu _{\vartheta }\left( \Omega \right) =\dint\limits_{\Omega }\vartheta
\left( x\right) dx.
\end{equation*}

\section{The Sobolev $\left( p\left( .\right) ,\protect\vartheta \right) $%
-Capacity and The Relative $\left( p\left( .\right) ,\protect\vartheta %
\right) $- Capacity}

A capacity for subsets of $%
%TCIMACRO{\U{211d} }%
%BeginExpansion
\mathbb{R}
%EndExpansion
^{n}$ was introduced in \cite{A2}. To define this capacity we denote%
\begin{equation*}
S_{p\left( .\right) ,\vartheta }(E)=\left\{ f\in W_{\vartheta
}^{1,p(.)}\left( 
%TCIMACRO{\U{211d} }%
%BeginExpansion
\mathbb{R}
%EndExpansion
^{n}\right) :f\geq 1\text{ in open set containing }E\right\} .
\end{equation*}%
The Sobolev $\left( p\left( .\right) ,\vartheta \right) $- capacity of $E$
is defined by%
\begin{equation*}
C_{p\left( .\right) ,\vartheta }\left( E\right) =\inf_{f\in S_{p\left(
.\right) ,\vartheta }(E)}\rho _{1,p\left( .\right) ,\vartheta }\left(
f\right) .
\end{equation*}%
Thanks to meaning of the infimum, in case $S_{p\left( .\right) ,\vartheta
}(E)=\emptyset ,$ we set $C_{p\left( .\right) ,\vartheta }\left( E\right)
=\infty $. If $1<p^{-}\leq p\left( .\right) \leq p^{+}<\infty ,$ then the
set function $E\longrightarrow C_{p\left( .\right) ,\vartheta }\left(
E\right) $ is an outer measure. If $f\in S_{p\left( .\right) ,\vartheta
}(E), $ then $\min \left\{ 1,f\right\} \in S_{p\left( .\right) ,\vartheta
}(E)$ and $\rho _{1,p\left( .\right) ,\vartheta }\left( \min \left\{
1,f\right\} \right) \leq \rho _{1,p\left( .\right) ,\vartheta }\left(
f\right) .$ Thus it is enough to test the Sobolev $\left( p\left( .\right)
,\vartheta \right) $- capacity by $f\in S_{p\left( .\right) ,\vartheta }(E)$
with $0\leq f\leq 1.$

\begin{remark}
In general, it is known that the space $C^{\infty }\left( 
%TCIMACRO{\U{211d} }%
%BeginExpansion
\mathbb{R}
%EndExpansion
^{n}\right) \cap W_{\vartheta }^{1,p\left( .\right) }\left( 
%TCIMACRO{\U{211d} }%
%BeginExpansion
\mathbb{R}
%EndExpansion
^{n}\right) $ is not dense in $W_{\vartheta }^{1,p\left( .\right) }\left( 
%TCIMACRO{\U{211d} }%
%BeginExpansion
\mathbb{R}
%EndExpansion
^{n}\right) $. But Zhikov and Surnachev have investigated a sufficient
condition for this denseness. This condition was formulated in terms of the
asymptotic behavior of the integrals of negative and positive powers of the
weight, see \cite{Zhi}. In this paper, we will assume that this denseness
holds.
\end{remark}

\begin{theorem}
\label{Yogunluk}Assume that $1<p^{-}\leq p\left( .\right) \leq p^{+}<\infty $
and $C^{\infty }\left( 
%TCIMACRO{\U{211d} }%
%BeginExpansion
\mathbb{R}
%EndExpansion
^{n}\right) \cap W_{\vartheta }^{1,p\left( .\right) }\left( 
%TCIMACRO{\U{211d} }%
%BeginExpansion
\mathbb{R}
%EndExpansion
^{n}\right) $ is dense in $W_{\vartheta }^{1,p\left( .\right) }\left( 
%TCIMACRO{\U{211d} }%
%BeginExpansion
\mathbb{R}
%EndExpansion
^{n}\right) $. If K is compact, then%
\begin{equation*}
C_{p\left( .\right) ,\vartheta }\left( K\right) =\inf_{f\in S_{p\left(
.\right) ,\vartheta }^{\infty }(K)}\rho _{1,p\left( .\right) ,\vartheta
}\left( f\right)
\end{equation*}%
where $S_{p\left( .\right) ,\vartheta }^{\infty }(K)=S_{p\left( .\right)
,\vartheta }(K)\cap C^{\infty }\left( 
%TCIMACRO{\U{211d} }%
%BeginExpansion
\mathbb{R}
%EndExpansion
^{n}\right) .$
\end{theorem}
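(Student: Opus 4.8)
Since $S_{p(.),\vartheta}^{\infty}(K)\subseteq S_{p(.),\vartheta}(K)$, the inequality
$C_{p(.),\vartheta}(K)\leq\inf_{f\in S_{p(.),\vartheta}^{\infty}(K)}\rho_{1,p(.),\vartheta}(f)$
is immediate from the definition of $C_{p(.),\vartheta}(K)$ as an infimum over the larger class. So the whole content lies in the reverse inequality: given $f\in S_{p(.),\vartheta}(K)$ with, say, $0\leq f\leq 1$, I must produce smooth functions $g_j\in S_{p(.),\vartheta}^{\infty}(K)$ with $\rho_{1,p(.),\vartheta}(g_j)\to\rho_{1,p(.),\vartheta}(f)$, or at least with $\limsup_j\rho_{1,p(.),\vartheta}(g_j)\leq\rho_{1,p(.),\vartheta}(f)+\varepsilon$ for arbitrary $\varepsilon>0$.

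First I would fix $f\in S_{p(.),\vartheta}(K)$, so that $f\geq 1$ on some open set $U\supseteq K$, and use the density hypothesis to choose $\varphi_j\in C^{\infty}(\mathbb{R}^n)\cap W_{\vartheta}^{1,p(.)}(\mathbb{R}^n)$ with $\varphi_j\to f$ in the $W_{\vartheta}^{1,p(.)}$-norm. The difficulty is that $\varphi_j\geq 1$ need not hold on any neighborhood of $K$, so $\varphi_j$ is not admissible. The standard fix is to pick a cutoff-type correction: choose $\eta\in C_0^{\infty}(U)$ with $0\leq\eta\leq 1$ and $\eta\equiv 1$ on a neighborhood of $K$ (possible since $K$ is compact and $U$ open), and set
$g_j=\varphi_j+\eta\,(1-\varphi_j)$ — wait, better to use the truncation-stable construction $g_j=\max\{\varphi_j,\eta\}$ if one wants $g_j\geq\eta$, hence $g_j\geq 1$ near $K$; but $\max$ destroys smoothness. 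The cleaner route: set $g_j = (1-\eta)\varphi_j + \eta = \varphi_j + \eta(1-\varphi_j)$, which is smooth, lies in $W_{\vartheta}^{1,p(.)}$, and equals $1$ on the neighborhood of $K$ where $\eta\equiv 1$, so $g_j\in S_{p(.),\vartheta}^{\infty}(K)$.

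It then remains to estimate $\rho_{1,p(.),\vartheta}(g_j)$. Writing $g_j-f = (1-\eta)(\varphi_j-f)+\eta(1-f)$ and noting $\eta(1-f)\equiv 0$ on $U$ (since $f\geq 1$ there) while $\eta$ is supported in $U$, one gets $g_j-f=(1-\eta)(\varphi_j-f)$ pointwise, and similarly $\nabla(g_j-f)=(1-\eta)\nabla(\varphi_j-f)-(\varphi_j-f)\nabla\eta$. Since $\eta$ and $\nabla\eta$ are bounded with compact support, $\|g_j-f\|_{1,p(.),\vartheta}\leq C\|\varphi_j-f\|_{1,p(.),\vartheta}\to 0$, so $g_j\to f$ in $W_{\vartheta}^{1,p(.)}(\mathbb{R}^n)$. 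Because $\rho_{1,p(.),\vartheta}$ is continuous with respect to norm convergence (a consequence of $1\leq p^-\leq p^+<\infty$, which makes modular and norm convergence equivalent in $L^{p(.)}_{\vartheta}$), we conclude $\rho_{1,p(.),\vartheta}(g_j)\to\rho_{1,p(.),\vartheta}(f)$. Hence $\inf_{g\in S_{p(.),\vartheta}^{\infty}(K)}\rho_{1,p(.),\vartheta}(g)\leq\rho_{1,p(.),\vartheta}(f)$, and taking the infimum over $f\in S_{p(.),\vartheta}(K)$ gives the reverse inequality.

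**The main obstacle** is the second paragraph: turning a norm-approximating smooth sequence into an \emph{admissible} smooth sequence (functions that are genuinely $\geq 1$ on a neighborhood of $K$, not merely close to $f$) while keeping control of the modular. The cutoff correction $g_j=\varphi_j+\eta(1-\varphi_j)$ handles this, but one must be careful that $\eta$ can be chosen once and for all (depending only on $K$ and $U$, not on $j$) and that the error terms involving $\nabla\eta$ are absorbed; the support of $\nabla\eta$ sits in the compact set $\overline{U}\setminus(\text{nbhd of }K)$, where $\vartheta$ is integrable, so these terms are genuinely controlled by $\|\varphi_j-f\|_{p(.),\vartheta}$. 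A secondary technical point is invoking the equivalence of modular and norm convergence, which is routine given the stated bounds on $p(.)$.
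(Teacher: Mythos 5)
Your proof is correct and is essentially the paper's own argument: with $\eta=1-\alpha$ your $g_j=(1-\eta)\varphi_j+\eta$ is exactly the paper's $\beta_n=1-(1-\alpha_n)\alpha$, and the remaining estimates (the cutoff-error terms controlled by $\|\varphi_j-f\|_{1,p(.),\vartheta}$, then modular continuity from $p^+<\infty$) match. You are, if anything, slightly more careful than the paper in spelling out the passage from norm convergence of $g_j$ to convergence of the modulars, which is needed to finish the infimum comparison.
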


\begin{proof}
Given any $f\in S_{p\left( .\right) ,\vartheta }(K)$ with $0\leq f\leq 1.$
Since by the assumption $C^{\infty }\left( 
%TCIMACRO{\U{211d} }%
%BeginExpansion
\mathbb{R}
%EndExpansion
^{n}\right) \cap W_{\vartheta }^{1,p\left( .\right) }\left( 
%TCIMACRO{\U{211d} }%
%BeginExpansion
\mathbb{R}
%EndExpansion
^{n}\right) $ is dense in $W_{\vartheta }^{1,p\left( .\right) }\left( 
%TCIMACRO{\U{211d} }%
%BeginExpansion
\mathbb{R}
%EndExpansion
^{n}\right) $, we can find a sequence $\left( \alpha _{n}\right) _{n\in 
%TCIMACRO{\U{2115} }%
%BeginExpansion
\mathbb{N}
%EndExpansion
}\subset C^{\infty }\left( 
%TCIMACRO{\U{211d} }%
%BeginExpansion
\mathbb{R}
%EndExpansion
^{n}\right) \cap W_{\vartheta }^{1,p\left( .\right) }\left( 
%TCIMACRO{\U{211d} }%
%BeginExpansion
\mathbb{R}
%EndExpansion
^{n}\right) $ such that $\alpha _{n}\longrightarrow f$ in $W_{\vartheta
}^{1,p\left( .\right) }\left( 
%TCIMACRO{\U{211d} }%
%BeginExpansion
\mathbb{R}
%EndExpansion
^{n}\right) .$ Now, we take an open bounded neighborhood $U$ of $K$ such
that $f=1$ in $U.$ Also, we characterise a function $\alpha \in C^{\infty
}\left( 
%TCIMACRO{\U{211d} }%
%BeginExpansion
\mathbb{R}
%EndExpansion
^{n}\right) ,$ $0\leq \alpha \leq 1$ be such that $\alpha =1$\ in $%
%TCIMACRO{\U{211d} }%
%BeginExpansion
\mathbb{R}
%EndExpansion
^{n}-U$ and $\alpha =0$ in an open neighborhood of $K$. Then, $f$ or $\alpha 
$ is equal to one in $%
%TCIMACRO{\U{211d} }%
%BeginExpansion
\mathbb{R}
%EndExpansion
^{n}.$ Now we define $\beta _{n}=1-\left( 1-\alpha _{n}\right) \alpha .$
Thus, we get 
\begin{eqnarray*}
f-\beta _{n} &=&\left( f-\alpha _{n}\right) \alpha +\left( 1-\alpha \right)
\left( f-1\right) \\
&=&\left( f-\alpha _{n}\right) \alpha .
\end{eqnarray*}%
Therefore, $\beta _{n}\longrightarrow f$ in $W_{\vartheta }^{1,p\left(
.\right) }\left( 
%TCIMACRO{\U{211d} }%
%BeginExpansion
\mathbb{R}
%EndExpansion
^{n}\right) .$ Indeed, first, if we use the definitions of defined
functions, then we get%
\begin{eqnarray*}
\rho _{p\left( .\right) ,\vartheta }\left( \left( f-\alpha _{n}\right)
\alpha \right) &=&\dint\limits_{%
%TCIMACRO{\U{211d} }%
%BeginExpansion
\mathbb{R}
%EndExpansion
^{n}-U}\left\vert f\left( x\right) -\alpha _{n}\left( x\right) \right\vert
^{p\left( x\right) }\vartheta \left( x\right) dx \\
&\leq &\rho _{p\left( .\right) ,\vartheta }\left( f-\alpha _{n}\right)
\longrightarrow 0.
\end{eqnarray*}%
Similarly, we have $\rho _{p\left( .\right) ,\vartheta }\left( \left\vert
\bigtriangledown \left( f-\alpha _{n}\right) \right\vert \right)
\longrightarrow 0.$Since $p^{+}<\infty ,$ we find that%
\begin{eqnarray*}
\left\Vert f-\beta _{n}\right\Vert _{1,p\left( .\right) ,\vartheta }
&=&\left\Vert \left( f-\alpha _{n}\right) \alpha \right\Vert _{1,p\left(
.\right) ,\vartheta } \\
&=&\left\Vert \left( f-\alpha _{n}\right) \alpha \right\Vert _{p\left(
.\right) ,\vartheta }+\left\Vert \bigtriangledown \left( \left( f-\alpha
_{n}\right) \alpha \right) \right\Vert _{p\left( .\right) ,\vartheta
}\longrightarrow 0.
\end{eqnarray*}%
Finally, since $\beta _{n}=1-\left( 1-\alpha _{n}\right) \alpha \in
S_{p\left( .\right) ,\vartheta }^{\infty }(K),$ it is clear to say that $%
S_{p\left( .\right) ,\vartheta }^{\infty }(K)$ is dense in $S_{p\left(
.\right) ,\vartheta }(K).$ This completes the proof.
\end{proof}

As in the proof [\cite{Dien4}, Proposition 10.1.10], we can show the
following theorem.

\begin{theorem}
Let $A\subset 
%TCIMACRO{\U{211d} }%
%BeginExpansion
\mathbb{R}
%EndExpansion
^{n}$ and $1<p^{-}\leq p\left( .\right) \leq p^{+}<\infty ,$ $1<q^{-}\leq
q\left( .\right) \leq q^{+}<\infty $ with $q\left( .\right) \leq p\left(
.\right) .$ If $C_{p\left( .\right) ,\vartheta }\left( A\right) =0,$ then $%
C_{q\left( .\right) ,\vartheta }\left( A\right) =0.$
\end{theorem}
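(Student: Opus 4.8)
The plan is, for each prescribed $\varepsilon>0$, to manufacture a competitor for the $\left( q\left( .\right) ,\vartheta \right) $-capacity of $A$ whose weighted $1,q\left( .\right) $-modular is below $\varepsilon $. Since $q^{-}>1$, the set function $E\mapsto C_{q\left( .\right) ,\vartheta }\left( E\right) $ is an outer measure, so writing $A=\bigcup_{m\in \mathbb{N}}\bigl(A\cap \overline{B\left( 0,m\right) }\bigr)$ and using monotonicity (each $A\cap \overline{B\left( 0,m\right) }$ still has vanishing $\left( p\left( .\right) ,\vartheta \right) $-capacity) together with countable subadditivity, it is enough to treat the case that $A$ is bounded; fix an open ball $B$ with $A\subset B$ and a cutoff $\psi \in C_{0}^{\infty }\left( \mathbb{R}^{n}\right) $, $0\leq \psi \leq 1$, $\psi =1$ on $B$, $\operatorname{supp}\psi \subset 2B$. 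As $C_{p\left( .\right) ,\vartheta }\left( A\right) =0$, pick $f\in S_{p\left( .\right) ,\vartheta }\left( A\right) $ with $0\leq f\leq 1$ and $\rho _{1,p\left( .\right) ,\vartheta }\left( f\right) $ as small as we like, say with $f\geq 1$ on an open set $V\supset A$, and set $h=\min \left\{ 1,\psi f\right\} $. Then $0\leq h\leq 1$, $h\equiv 1$ on $B\cap V$, which is an open neighbourhood of $A$ because $A\subset B$, and $\operatorname{supp}h\subset 2B$. From $h\leq f$, $\left\vert \nabla h\right\vert \leq \left\vert \nabla f\right\vert +\left\Vert \nabla \psi \right\Vert _{\infty }\left\vert f\right\vert $ a.e., the inequality $\left( a+b\right) ^{p\left( x\right) }\leq 2^{p^{+}}\bigl(a^{p\left( x\right) }+b^{p\left( x\right) }\bigr)$ and $p^{+}<\infty $, one obtains $\rho _{1,p\left( .\right) ,\vartheta }\left( h\right) \leq C\,\rho _{1,p\left( .\right) ,\vartheta }\left( f\right) $ with $C=C\left( \psi ,p^{-},p^{+}\right) $ independent of $f$; in particular $h\in W_{\vartheta }^{1,p\left( .\right) }\left( \mathbb{R}^{n}\right) $ and $\rho _{1,p\left( .\right) ,\vartheta }\left( h\right) $ is as small as we please.

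The decisive step is to pass from $p\left( .\right) $ to $q\left( .\right) $ on the fixed ball $2B$. Since $\vartheta $ is locally integrable, $\mu _{\vartheta }\left( 2B\right) <\infty $, and because $q\left( .\right) \leq p\left( .\right) $ the variable exponent Lebesgue spaces over the finite measure $\vartheta \left( x\right) dx$ on $2B$ satisfy the continuous embedding $L_{\vartheta }^{p\left( .\right) }\left( 2B\right) \hookrightarrow L_{\vartheta }^{q\left( .\right) }\left( 2B\right) $, exactly as in the proof of [\cite{Dien4}, Proposition 10.1.10]. As $h$ and $\left\vert \nabla h\right\vert $ are supported in $2B$, this yields $h\in L_{\vartheta }^{q\left( .\right) }\left( \mathbb{R}^{n}\right) $ and $\left\vert \nabla h\right\vert \in L_{\vartheta }^{q\left( .\right) }\left( \mathbb{R}^{n}\right) $, so $h\in W_{\vartheta }^{1,q\left( .\right) }\left( \mathbb{R}^{n}\right) $ and hence $h\in S_{q\left( .\right) ,\vartheta }\left( A\right) $. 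Moreover $\rho _{p\left( .\right) ,\vartheta }\left( h\right) \to 0$ forces $\left\Vert h\right\Vert _{p\left( .\right) ,\vartheta }\to 0$, hence $\left\Vert h\right\Vert _{q\left( .\right) ,\vartheta }\to 0$ by the embedding, hence $\rho _{q\left( .\right) ,\vartheta }\left( h\right) \to 0$ (using $q^{+}<\infty $); the same chain applies verbatim to $\left\vert \nabla h\right\vert $. Thus $\rho _{1,q\left( .\right) ,\vartheta }\left( h\right) $ can be made arbitrarily small, and since $C_{q\left( .\right) ,\vartheta }\left( A\right) \leq \rho _{1,q\left( .\right) ,\vartheta }\left( h\right) $ we conclude $C_{q\left( .\right) ,\vartheta }\left( A\right) =0$.

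I expect the main obstacle to be precisely this passage between exponents: on $\mathbb{R}^{n}$ the measure $\vartheta \left( x\right) dx$ need not be finite, so $L_{\vartheta }^{p\left( .\right) }\left( \mathbb{R}^{n}\right) \hookrightarrow L_{\vartheta }^{q\left( .\right) }\left( \mathbb{R}^{n}\right) $ is false in general, and one genuinely has to localise the competitor to a fixed ball first --- which is the reason for the reduction to bounded $A$ and for the cutoff $\psi $. Everything else is routine bookkeeping with the modular--norm equivalence and the elementary sum estimate for $\left\vert \nabla \left( \psi f\right) \right\vert $.
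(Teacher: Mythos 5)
Your argument is correct and is essentially the proof the paper has in mind: the paper simply invokes ``as in the proof of [\cite{Dien4}, Proposition 10.1.10],'' and that proof runs exactly along your lines --- decompose $A$ into bounded pieces, localize the $S_{p\left( .\right) ,\vartheta }$-competitor by a cutoff so that it lives on a fixed ball $2B$ with $\mu _{\vartheta }\left( 2B\right) <\infty $, and pass from $p\left( .\right) $ to $q\left( .\right) $ via the embedding $L_{\vartheta }^{p\left( .\right) }\left( 2B\right) \hookrightarrow L_{\vartheta }^{q\left( .\right) }\left( 2B\right) $. You also correctly route the smallness through the Luxemburg norm rather than attempting a direct modular comparison, which is the one place a naive estimate would fail, since $h^{q\left( x\right) }\geq h^{p\left( x\right) }$ for $0\leq h\leq 1$ and $q\left( .\right) \leq p\left( .\right) $.
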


Now, we will introduce relative $\left( p\left( .\right) ,\vartheta \right) $%
- capacity.

\begin{definition}
Let $p\left( .\right) \in \mathcal{P}\left( \Omega \right) $ and $K\subset
\Omega $ be a compact subset. We denote%
\begin{equation*}
R_{p\left( .\right) ,\vartheta }\left( K,\Omega \right) =\left\{ f\in
W_{\vartheta }^{1,p(.)}\left( \Omega \right) \cap C_{0}\left( \Omega \right)
:f>1\text{ on }K\text{ and }f\geq 0\right\} ,
\end{equation*}%
set 
\begin{eqnarray*}
\text{cap}_{p\left( .\right) ,\vartheta }^{\ast }\left( K,\Omega \right)
&=&\inf_{f\in R_{p\left( .\right) ,\vartheta }\left( K,\Omega \right) }\rho
_{p\left( .\right) ,\vartheta }\left( \left\vert \bigtriangledown
f\right\vert \right) \\
&=&\inf_{f\in R_{p\left( .\right) ,\vartheta }\left( K,\Omega \right)
}\dint\limits_{\Omega }\left\vert \bigtriangledown f\left( x\right)
\right\vert ^{p\left( x\right) }\vartheta \left( x\right) dx.
\end{eqnarray*}%
\ \ Moreover, if $U\subset \Omega $ is an open subset, then we define%
\begin{equation*}
\text{cap}_{p\left( .\right) ,\vartheta }\left( U,\Omega \right) =\sup 
_{\substack{ K\subset U  \\ compact}}\text{cap}_{p\left( .\right) ,\vartheta
}^{\ast }\left( K,\Omega \right) ,
\end{equation*}%
and also for an arbitrary set $A\subset \Omega $ we define%
\begin{equation*}
\text{cap}_{p\left( .\right) ,\vartheta }\left( A,\Omega \right) =\inf 
_{\substack{ A\subset U\subset \Omega  \\ U\text{ }open}}\text{cap}_{p\left(
.\right) ,\vartheta }\left( U,\Omega \right) .
\end{equation*}%
We call cap$_{p\left( .\right) ,\vartheta }\left( A,\Omega \right) $ the
variational $\left( p\left( .\right) ,\vartheta \right) $-capacity of $A$
with respect to $\Omega .$ We say simply cap$_{p\left( .\right) ,\vartheta
}\left( A,\Omega \right) $ the relative $\left( p\left( .\right) ,\vartheta
\right) $- capacity. It is evident that the same number cap$_{p\left(
.\right) ,\vartheta }\left( A,\Omega \right) $ is obtained if the infimum in
the definition is taken over $f\in R_{p\left( .\right) ,\vartheta }\left(
K,\Omega \right) $ with $0\leq f\leq 1;$ when suitable, we implicitly assume
this extra condition.
\end{definition}

By the same arguments as in [\cite{Dien4}, Proposition 10.2.2] and [\cite%
{Dien4}, Proposition 10.2.3], we obtain Theorem \ref{atif1} and Theorem \ref%
{atif2}, respectively.

\begin{theorem}
\label{atif1}Let $K\subset \Omega $ be a compact subset. We denote 
\begin{equation*}
R_{p\left( .\right) ,\vartheta }^{\ast }\left( K,\Omega \right) =\left\{
f\in W_{\vartheta }^{1,p(.)}\left( \Omega \right) \cap C_{0}\left( \Omega
\right) :f\geq 1\text{ on }K\right\} .
\end{equation*}%
Then%
\begin{equation*}
\text{cap}_{p\left( .\right) ,\vartheta }^{\ast }\left( K,\Omega \right)
=\inf_{f\in R_{p\left( .\right) ,\vartheta }^{\ast }\left( K,\Omega \right)
}\rho _{p\left( .\right) ,\vartheta }\left( \left\vert \bigtriangledown
f\right\vert \right) .
\end{equation*}
\end{theorem}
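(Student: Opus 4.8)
The plan is to show the two infima coincide by proving each is $\le$ the other. One direction is immediate: since $R_{p(.),\vartheta}(K,\Omega)\subset R_{p(.),\vartheta}^{\ast}(K,\Omega)$ (the condition $f>1$ on $K$ is stronger than $f\ge 1$ on $K$, and requiring $f\ge 0$ only shrinks the admissible class further), the infimum over the larger class $R_{p(.),\vartheta}^{\ast}(K,\Omega)$ is at most the infimum over $R_{p(.),\vartheta}(K,\Omega)$, i.e. $\inf_{f\in R_{p(.),\vartheta}^{\ast}(K,\Omega)}\rho_{p(.),\vartheta}(|\nabla f|)\le \text{cap}_{p(.),\vartheta}^{\ast}(K,\Omega)$.

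For the reverse inequality, fix an arbitrary $f\in R_{p(.),\vartheta}^{\ast}(K,\Omega)$; I want to produce, for each $\varepsilon>0$, a competitor $g\in R_{p(.),\vartheta}(K,\Omega)$ with $\rho_{p(.),\vartheta}(|\nabla g|)\le \rho_{p(.),\vartheta}(|\nabla f|)+\varepsilon$. The standard device is to rescale and truncate: first replace $f$ by $\max\{0,f\}$, which lies in $W_{\vartheta}^{1,p(.)}(\Omega)\cap C_0(\Omega)$, is $\ge 0$, is still $\ge 1$ on $K$, and has $|\nabla \max\{0,f\}|\le |\nabla f|$ a.e., so the modular does not increase. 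Then for $\lambda>1$ close to $1$ set $g=\min\{1,\lambda f\}$ (or equivalently $\lambda\min\{1,f\}$ after first truncating above at $1$); this function is continuous, compactly supported in $\Omega$, satisfies $0\le g\le 1$ hence $g\ge 0$, and on $K$ we have $\lambda f\ge\lambda>1$ so $g=1>1$ fails — so instead I take $g=\min\{1,\lambda f\}$ where on $K$, $\lambda f\ge\lambda$, but we need strict inequality $g>1$ which is impossible if $g\le 1$. The correct construction is therefore $g=\min\{\tfrac{\lambda+1}{2},\lambda f\}$ normalized, or more cleanly: let $g=\lambda f$ on the set where $\lambda f<1$ and observe we actually want $g>1$ on $K$; so take $g=\min\{\tfrac{1+\lambda}{2}\,,\,\lambda f\}$. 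Then on $K$, $g=\tfrac{1+\lambda}{2}>1$, and $\nabla g = \lambda\nabla f$ only on $\{\lambda f<\tfrac{1+\lambda}{2}\}$ and $0$ elsewhere, so $\rho_{p(.),\vartheta}(|\nabla g|)\le \lambda^{p^+}\rho_{p(.),\vartheta}(|\nabla f|)$.

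Finally I would let $\lambda\downarrow 1$: since $\lambda^{p^+}\to 1$, we get $\inf_{g\in R_{p(.),\vartheta}(K,\Omega)}\rho_{p(.),\vartheta}(|\nabla g|)\le \lambda^{p^+}\rho_{p(.),\vartheta}(|\nabla f|)\to \rho_{p(.),\vartheta}(|\nabla f|)$, and taking the infimum over all $f\in R_{p(.),\vartheta}^{\ast}(K,\Omega)$ yields $\text{cap}_{p(.),\vartheta}^{\ast}(K,\Omega)\le \inf_{f\in R_{p(.),\vartheta}^{\ast}(K,\Omega)}\rho_{p(.),\vartheta}(|\nabla f|)$. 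Combining the two directions gives the claimed equality. The main point to be careful about — the step I expect to require the most attention — is verifying that the truncated/rescaled functions genuinely remain in $W_{\vartheta}^{1,p(.)}(\Omega)\cap C_0(\Omega)$ (lattice operations preserve membership in the weighted Sobolev space, with the chain rule giving the pointwise gradient bounds, and $\min$ with a constant does not enlarge the support), together with the uniformity of the factor $\lambda^{p^+}$, which is where the hypothesis $p^+<\infty$ is used; this is exactly the argument of [\cite{Dien4}, Proposition 10.2.2] transcribed to the weighted variable-exponent modular, so no genuinely new difficulty arises.
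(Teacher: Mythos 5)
Your proof is correct and matches the argument the paper delegates to \cite{Dien4}, Proposition 10.2.2: the easy inclusion $R_{p(.),\vartheta}(K,\Omega)\subset R_{p(.),\vartheta}^{\ast}(K,\Omega)$ gives one inequality, and for the other you take $\max\{0,f\}$, rescale by $\lambda>1$, note the rescaled function is admissible for the relative capacity, bound the modular by $\lambda^{p^{+}}\rho_{p(.),\vartheta}(|\nabla f|)$ using $p^{+}<\infty$, and let $\lambda\downarrow 1$. The additional truncation at $\tfrac{1+\lambda}{2}$ is harmless but superfluous, since $R_{p(.),\vartheta}(K,\Omega)$ does not require $f\le 1$; $g=\lambda\max\{0,f\}$ is already admissible.
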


\begin{theorem}
\label{atif2}Let $p\left( .\right) \in \mathcal{P}\left( \Omega \right) $
and $\vartheta $ is a weight function. Then, we have cap$_{p\left( .\right)
,\vartheta }^{\ast }\left( K,\Omega \right) =$cap$_{p\left( .\right)
,\vartheta }\left( K,\Omega \right) $ for every compact set $K\subset \Omega 
$.
\end{theorem}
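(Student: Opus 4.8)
The plan is to prove the two inequalities $\text{cap}_{p(.),\vartheta}^{\ast}(K,\Omega)\le \text{cap}_{p(.),\vartheta}(K,\Omega)$ and $\text{cap}_{p(.),\vartheta}(K,\Omega)\le \text{cap}_{p(.),\vartheta}^{\ast}(K,\Omega)$ separately, using nothing more than the chain of definitions together with the continuity of admissible functions.

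For the first inequality I would argue as follows. Let $U$ be any open set with $K\subset U\subset\Omega$. Since $K$ is itself a compact subset of $U$, the definition of $\text{cap}_{p(.),\vartheta}(U,\Omega)$ as a supremum over compact subsets of $U$ gives $\text{cap}_{p(.),\vartheta}^{\ast}(K,\Omega)\le \text{cap}_{p(.),\vartheta}(U,\Omega)$. Taking the infimum over all such open $U$ and invoking the definition of $\text{cap}_{p(.),\vartheta}(K,\Omega)$ for arbitrary sets yields $\text{cap}_{p(.),\vartheta}^{\ast}(K,\Omega)\le \text{cap}_{p(.),\vartheta}(K,\Omega)$. (In particular, if $\text{cap}_{p(.),\vartheta}^{\ast}(K,\Omega)=\infty$, e.g.\ when $R_{p(.),\vartheta}(K,\Omega)=\emptyset$, this already forces equality and we are done.)

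For the reverse inequality, fix $\varepsilon>0$ and choose $f\in R_{p(.),\vartheta}(K,\Omega)$ with $\rho_{p(.),\vartheta}(|\nabla f|)\le \text{cap}_{p(.),\vartheta}^{\ast}(K,\Omega)+\varepsilon$. The point is to exploit that the admissible class $R_{p(.),\vartheta}(K,\Omega)$ is defined with the \emph{strict} inequality $f>1$ on $K$: since $f\in C_{0}(\Omega)$ is continuous, the super-level set $U:=\{x\in\Omega: f(x)>1\}$ is open and satisfies $K\subset U\subset\Omega$. Now for every compact $K'\subset U$ we still have $f>1$ on $K'$ and $f\ge 0$, hence $f\in R_{p(.),\vartheta}(K',\Omega)$ and therefore $\text{cap}_{p(.),\vartheta}^{\ast}(K',\Omega)\le \rho_{p(.),\vartheta}(|\nabla f|)$. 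Passing to the supremum over compact $K'\subset U$ gives $\text{cap}_{p(.),\vartheta}(U,\Omega)\le \rho_{p(.),\vartheta}(|\nabla f|)$, and since $U$ is an open neighbourhood of $K$ it is admissible in the infimum defining $\text{cap}_{p(.),\vartheta}(K,\Omega)$, so $\text{cap}_{p(.),\vartheta}(K,\Omega)\le \text{cap}_{p(.),\vartheta}(U,\Omega)\le \rho_{p(.),\vartheta}(|\nabla f|)\le \text{cap}_{p(.),\vartheta}^{\ast}(K,\Omega)+\varepsilon$. Letting $\varepsilon\to 0$ completes the argument.

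I do not expect a genuine obstacle: the proof is a routine unwinding of the definitions, and the only delicate point is one already encoded in the setup, namely that $R_{p(.),\vartheta}(K,\Omega)$ uses $f>1$ on $K$ rather than $f\ge 1$, so that the super-level set $\{f>1\}$ is open and can serve as the required open neighbourhood of $K$ in the final step; by Theorem \ref{atif1} this strict requirement does not alter the value of $\text{cap}_{p(.),\vartheta}^{\ast}$, so nothing is lost in passing between the two formulations. The only other ingredient is the monotonicity $A\subset B\Rightarrow \text{cap}_{p(.),\vartheta}(A,\Omega)\le\text{cap}_{p(.),\vartheta}(B,\Omega)$ (needed only for a compact set contained in an open set), which is immediate from the definitions.
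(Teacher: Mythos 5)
Your proof is correct and follows exactly the argument the paper invokes by citing Diening--Harjulehto--H\"ast\"o--R\r{u}\v{z}i\v{c}ka, Proposition 10.2.3: the first inequality is immediate from the inf--sup definitions, and the second hinges on the strict condition $f>1$ in $R_{p(.),\vartheta}(K,\Omega)$ making the super-level set $\{f>1\}$ an open neighbourhood of $K$ whose compact subsets all admit $f$ as a test function. Nothing is missing, and the edge case $R_{p(.),\vartheta}(K,\Omega)=\emptyset$ is handled correctly.
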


Therefore the relative $\left( p\left( .\right) ,\vartheta \right) $-
capacity is well defined on compact sets. But, if $p^{+}=\infty ,$ then the
elements of the $R_{p\left( .\right) ,\vartheta }^{\ast }\left( K,\Omega
\right) $ do not satisfy equality in general. Also, the relative $\left(
p\left( .\right) ,\vartheta \right) $- capacity has the following properties.

\begin{enumerate}
\item[P1] . cap$_{p\left( .\right) ,\vartheta }\left( \emptyset ,\Omega
\right) =0.$

\item[P2] . If $A_{1}\subset A_{2}\subset \Omega _{2}\subset \Omega _{1},$
then cap$_{p\left( .\right) ,\vartheta }\left( A_{1},\Omega _{1}\right) \leq 
$cap$_{p\left( .\right) ,\vartheta }\left( A_{2},\Omega _{2}\right) .$

\item[P3] . If $A$ is a subset of $\Omega ,$ then%
\begin{equation*}
\text{cap}_{p\left( .\right) ,\vartheta }\left( A,\Omega \right) =\inf 
_{\substack{ A\subset U\subset \Omega  \\ U\text{ }open}}\text{cap}_{p\left(
.\right) ,\vartheta }\left( U,\Omega \right) .
\end{equation*}

\item[P4] . If $K_{1}$ and $K_{2}$ are compact subsets of $\Omega ,$ then%
\begin{eqnarray*}
\text{cap}_{p\left( .\right) ,\vartheta }\left( K_{1}\cup K_{2},\Omega
\right) +\text{cap}_{p\left( .\right) ,\vartheta }\left( K_{1}\cap
K_{2},\Omega \right) &\leq &\text{cap}_{p\left( .\right) ,\vartheta }\left(
K_{1},\Omega \right) \\
&&+\text{cap}_{p\left( .\right) ,\vartheta }\left( K_{2},\Omega \right) .
\end{eqnarray*}

\item[P5] . Let $K_{n}$ is a decreasing sequence of compact subsets of $%
\Omega $ for $n\in 
%TCIMACRO{\U{2115} }%
%BeginExpansion
\mathbb{N}
%EndExpansion
.$ Then%
\begin{equation*}
\lim_{n\longrightarrow \infty }\text{cap}_{p\left( .\right) ,\vartheta
}\left( K_{n},\Omega \right) =\text{cap}_{p\left( .\right) ,\vartheta
}\left( \tbigcap\limits_{n=1}^{\infty }K_{n},\Omega \right) .
\end{equation*}

\item[P6] . If $A_{n}$ is an increasing sequence of subsets of $\Omega $ for 
$n\in 
%TCIMACRO{\U{2115} }%
%BeginExpansion
\mathbb{N}
%EndExpansion
,$ then%
\begin{equation*}
\lim_{n\longrightarrow \infty }\text{cap}_{p\left( .\right) ,\vartheta
}\left( A_{n},\Omega \right) =\text{cap}_{p\left( .\right) ,\vartheta
}\left( \tbigcup\limits_{n=1}^{\infty }A_{n},\Omega \right) .
\end{equation*}

\item[P7] . If $A_{n}\subset \Omega $ for $n\in 
%TCIMACRO{\U{2115} }%
%BeginExpansion
\mathbb{N}
%EndExpansion
,$ then%
\begin{equation*}
\text{cap}_{p\left( .\right) ,\vartheta }\left(
\tbigcup\limits_{n=1}^{\infty }A_{n},\Omega \right) \leq
\tsum\limits_{n=1}^{\infty }\text{cap}_{p\left( .\right) ,\vartheta }\left(
A_{n},\Omega \right) .
\end{equation*}
\end{enumerate}

The proof of these properties is the same as in \cite{Dien4}, \cite{Har1}, 
\cite{Hei}. Hence the relative $\left( p\left( .\right) ,\vartheta \right) $%
- capacity is an outer measure. A set function which satisfies the capacity
properties (P1), (P2), (P5) and (P6) is called Choquet capacity, see \cite%
{Cho}. Therefore we have the following result.

\begin{corollary}
\label{Chouquet}The set function $A\longrightarrow $cap$_{p\left( .\right)
,\vartheta }\left( A,\Omega \right) ,$ $A\subset \Omega ,$ is a Choquet
capacity. In particular, all Borel sets $A\subset \Omega $ are capacitable,
that is,%
\begin{equation*}
\text{cap}_{p\left( .\right) ,\vartheta }\left( A,\Omega \right) =\inf 
_{\substack{ A\subset U\subset \Omega  \\ U\text{ }open}}\text{cap}_{p\left(
.\right) ,\vartheta }\left( U,\Omega \right) =\sup_{\substack{ K\subset A 
\\ compact}}\text{cap}_{p\left( .\right) ,\vartheta }\left( K,\Omega \right)
.
\end{equation*}
\end{corollary}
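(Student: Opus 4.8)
The plan is to invoke the abstract Choquet capacitability theorem, so the only real work is to verify that our set function satisfies the axioms of a Choquet capacity. First I would note that the excerpt has already recorded properties (P1), (P2), (P5) and (P6) for $\mathrm{cap}_{p(.),\vartheta}(\cdot,\Omega)$ as a set function on subsets of $\Omega$; by the standard definition (see \cite{Cho}) a set function enjoying precisely these four properties—namely, vanishing on the empty set, monotonicity, continuity from above along decreasing sequences of \emph{compact} sets, and continuity from below along increasing sequences of \emph{arbitrary} sets—is a Choquet capacity on $\Omega$. So the first sentence of the corollary is immediate from the material already established.

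For the ``in particular'' assertion, the plan is to quote Choquet's capacitability theorem: every Choquet capacity is such that all Suslin sets, and in particular all Borel sets, are capacitable, meaning the outer regularity through open supersets coincides with the inner value obtained as a supremum over compact subsets. Concretely, I would argue as follows. The identity $\mathrm{cap}_{p(.),\vartheta}(A,\Omega)=\inf_{A\subset U\subset\Omega,\ U\text{ open}}\mathrm{cap}_{p(.),\vartheta}(U,\Omega)$ is just property (P3), valid for every $A\subset\Omega$ by definition. The nontrivial equality is $\mathrm{cap}_{p(.),\vartheta}(A,\Omega)=\sup_{K\subset A,\ K\text{ compact}}\mathrm{cap}_{p(.),\vartheta}(K,\Omega)$ for Borel $A$. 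The inequality ``$\geq$'' follows from monotonicity (P2). The reverse inequality ``$\leq$'' is exactly the content of Choquet's theorem applied to the Choquet capacity just exhibited: since $\Omega$ is (a subset of) $\mathbb{R}^{n}$, which is a metrizable space in which compact sets abound, every Borel subset of $\Omega$ is capacitable.

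The main obstacle—to the extent there is one—is simply making sure the hypotheses of the abstract capacitability theorem are met in the precise form in which it is stated in \cite{Cho} (or in \cite{Hei}, \cite{Dien4}): one needs an ambient topological space that is ``nice enough'' (here $\mathbb{R}^{n}$, hence $\Omega$ with its subspace topology, causes no trouble), and one needs the four axioms (P1), (P2), (P5), (P6) with (P5) phrased for decreasing sequences of \emph{compact} sets and (P6) for increasing sequences of \emph{general} sets, which is exactly how they were listed above. Since all of this has been prepared, the proof is essentially a citation: the set function is a Choquet capacity by (P1), (P2), (P5), (P6), and therefore by Choquet's capacitability theorem all Borel (indeed all analytic) subsets of $\Omega$ are capacitable, which yields the displayed chain of equalities.

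\begin{proof}
By the properties (P1), (P2), (P5) and (P6) established above, the set function $A\longmapsto\mathrm{cap}_{p(.),\vartheta}(A,\Omega)$ satisfies the defining axioms of a Choquet capacity on $\Omega$ (cf. \cite{Cho}). Since $\Omega\subset\mathbb{R}^{n}$ carries a metrizable topology, Choquet's capacitability theorem applies and shows that every Borel (in fact every analytic) subset $A\subset\Omega$ is capacitable. The first equality in the displayed formula is property (P3), valid for arbitrary $A\subset\Omega$. For the second equality, monotonicity (P2) gives
\begin{equation*}
\sup_{\substack{K\subset A\\ \text{compact}}}\mathrm{cap}_{p(.),\vartheta}(K,\Omega)\leq\mathrm{cap}_{p(.),\vartheta}(A,\Omega),
\end{equation*}
while the reverse inequality is precisely the content of capacitability applied to the Borel set $A$. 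This proves the asserted chain of equalities.
\end{proof}
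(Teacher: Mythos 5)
Your proposal is correct and follows exactly the route the paper takes: observe that (P1), (P2), (P5), (P6) make $\mathrm{cap}_{p(.),\vartheta}(\cdot,\Omega)$ a Choquet capacity and then invoke Choquet's capacitability theorem (together with (P2) and (P3)) to get capacitability of Borel sets. The paper leaves the corollary without a separate proof precisely because it is an immediate consequence of those properties, as you have spelled out.
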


Note that each Borel set is a Suslin set and the definition of Suslin sets
can be reach in \cite{Fed}. Also, it is not necessary that $p^{+}<\infty $
for satisfying all these properties.

\begin{theorem}
If $A_{1}\subset \Omega _{1}\subset A_{2}\subset \Omega _{2}\subset
...\subset \Omega =\tbigcup\limits_{n=1}^{\infty }\Omega _{n}$, then%
\begin{equation*}
\text{cap}_{p\left( .\right) ,\vartheta }\left( A_{1},\Omega \right) \leq
\left( \tsum\limits_{n=1}^{\infty }\left( \text{cap}_{p\left( .\right)
,\vartheta }\left( A_{n},\Omega _{n}\right) \right) ^{\frac{1}{1-p^{-}}%
}\right) ^{1-p^{-}}.
\end{equation*}
\end{theorem}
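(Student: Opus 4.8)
The plan is to produce, for each finite $N$, a single admissible function for the condenser $(A_{1},\Omega)$ by gluing together near-optimal test functions for the condensers $(A_{n},\Omega_{n})$, $n=1,\dots,N$, and then optimizing over the gluing parameters. First I would reduce to the case that $A_{1}$ is compact: since $A_{1}\subset\Omega_{1}$, property P3 (after intersecting the competing open sets with $\Omega_{1}$) expresses $\text{cap}_{p(.),\vartheta}(A_{1},\Omega)$ as the infimum over open $U$ with $A_{1}\subset U\subset\Omega_{1}$ of $\text{cap}_{p(.),\vartheta}(U,\Omega)=\sup_{K\subset U}\text{cap}_{p(.),\vartheta}^{\ast}(K,\Omega)$, and for every compact $K\subset U$ the chain $K\subset\Omega_{1}\subset A_{2}\subset\Omega_{2}\subset\cdots$ persists; hence it suffices to bound $\text{cap}_{p(.),\vartheta}^{\ast}(K,\Omega)$ for a compact set in the first slot (with the first-level capacity taken relative to the matching domain), after which the auxiliary $U$'s are removed using that $t\mapsto t^{1/(1-p^{-})}$ is decreasing and that $\inf_{U}\text{cap}_{p(.),\vartheta}(U,\Omega_{1})=\text{cap}_{p(.),\vartheta}(A_{1},\Omega_{1})$. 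On compact sets Theorems \ref{atif1} and \ref{atif2} are at my disposal.

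For the construction, fix levels $1=\ell_{0}>\ell_{1}>\cdots>\ell_{N}=0$ and set $d_{n}=\ell_{n-1}-\ell_{n}>0$, so $\sum_{n=1}^{N}d_{n}=1$. Given $\eta>0$, I would choose $g_{1}\in W_{\vartheta}^{1,p(.)}(\Omega_{1})\cap C_{0}(\Omega_{1})$ with $0\le g_{1}\le1$, $g_{1}=1$ on $A_{1}$ and $\rho_{p(.),\vartheta}(|\nabla g_{1}|)<\text{cap}_{p(.),\vartheta}(A_{1},\Omega_{1})+\eta$, and for $2\le n\le N$ choose $g_{n}\in W_{\vartheta}^{1,p(.)}(\Omega_{n})\cap C_{0}(\Omega_{n})$ with $0\le g_{n}\le1$, $g_{n}=1$ on $\overline{\Omega_{n-1}}$ and $\rho_{p(.),\vartheta}(|\nabla g_{n}|)<\text{cap}_{p(.),\vartheta}(A_{n},\Omega_{n})+\eta$; the last choice is legitimate because $\overline{\Omega_{n-1}}\subset A_{n}$ gives, by monotonicity and Theorem \ref{atif2}, $\text{cap}_{p(.),\vartheta}^{\ast}(\overline{\Omega_{n-1}},\Omega_{n})\le\text{cap}_{p(.),\vartheta}(A_{n},\Omega_{n})$. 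It is essential that I test against the compact set $\overline{\Omega_{n-1}}$ and not against $A_{n}$ itself, so that nothing has to be assumed about $A_{n}$. Then I set $u=\sum_{n=1}^{N}d_{n}g_{n}$. Each $g_{n}$ equals $1$ on $A_{1}$ (for $n\ge2$ since $\overline{\Omega_{n-1}}\supset\Omega_{1}\supset A_{1}$), so $u=1$ on $A_{1}$; moreover $0\le u\le1$, $u$ is continuous and compactly supported in $\Omega_{N}\subset\Omega$, and $u\in W_{\vartheta}^{1,p(.)}(\Omega)$, so $u$ is admissible and $\text{cap}_{p(.),\vartheta}^{\ast}(A_{1},\Omega)\le\rho_{p(.),\vartheta}(|\nabla u|)$.

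The estimate then hinges on two observations. First, the gradients have essentially disjoint supports: for $n<m$ the function $g_{m}$ is identically $1$ on the open set $\Omega_{m-1}$, which contains $\text{supp}\,g_{n}$, so $\nabla g_{m}=0$ a.e. on $\{\nabla g_{n}\ne0\}$; consequently $|\nabla u|^{p(x)}=\sum_{n=1}^{N}d_{n}^{p(x)}|\nabla g_{n}|^{p(x)}$ for a.e. $x$. Second, because $0<d_{n}<1$ we have $d_{n}^{p(x)}\le d_{n}^{p^{-}}$ — this is precisely where $p^{-}$, and not $p^{+}$, enters — so $\rho_{p(.),\vartheta}(|\nabla u|)\le\sum_{n=1}^{N}d_{n}^{p^{-}}\rho_{p(.),\vartheta}(|\nabla g_{n}|)<\sum_{n=1}^{N}d_{n}^{p^{-}}b_{n}$ with $b_{n}=\text{cap}_{p(.),\vartheta}(A_{n},\Omega_{n})+\eta$. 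Minimizing $\sum_{n=1}^{N}d_{n}^{p^{-}}b_{n}$ over $d_{n}>0$ with $\sum_{n=1}^{N}d_{n}=1$ is a convex problem whose Lagrange condition gives $d_{n}\propto b_{n}^{1/(1-p^{-})}$ and optimal value $\big(\sum_{n=1}^{N}b_{n}^{1/(1-p^{-})}\big)^{1-p^{-}}$. Letting $\eta\to0$ and then $N\to\infty$ — the right side being nonincreasing in $N$, since $1-p^{-}<0$ and the terms are positive — yields $\text{cap}_{p(.),\vartheta}^{\ast}(A_{1},\Omega)\le\big(\sum_{n=1}^{\infty}(\text{cap}_{p(.),\vartheta}(A_{n},\Omega_{n}))^{1/(1-p^{-})}\big)^{1-p^{-}}$, which together with the reduction of the first paragraph is the assertion. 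The degenerate cases in which some $\text{cap}_{p(.),\vartheta}(A_{n},\Omega_{n})$ equals $0$ or $\infty$, and the case of unbounded $\Omega_{n}$, are settled separately by routine monotonicity remarks.

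I expect the genuine difficulty to be the construction of the second paragraph: recognizing that the transition shells $\Omega_{n}\setminus A_{n}$ must be stacked so that their gradient-supports are disjoint, that the right functions to glue are those for the compact sets $\overline{\Omega_{n-1}}$, and that the weights $d_{n}$ must be normalized to sum to $1$ precisely so that the bound $d_{n}^{p(x)}\le d_{n}^{p^{-}}$ is available. Once the admissible $u$ is in hand, the rest — additivity of the modular over disjoint gradient-supports, the constrained minimization, and the two monotone limits — is bookkeeping, though one must verify at each step that the decreasing maps $t\mapsto t^{1/(1-p^{-})}$ and $s\mapsto s^{1-p^{-}}$ are applied in the direction that preserves the inequality.
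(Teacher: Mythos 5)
Your overall strategy is the same as the paper's: extend an admissible function for $(A_{1},\Omega)$ by gluing weighted near-optimizers across the successive annular regions so that the gradient supports are pairwise disjoint, observe that $d_{n}^{p(x)}\le d_{n}^{p^{-}}$ because $\sum d_{n}=1$, and then optimize the weights by a Lagrange computation to get the $\left(\sum b_{n}^{1/(1-p^{-})}\right)^{1-p^{-}}$ bound. The reduction to compact $A_{1}$ via P3 and the final monotone limits also match the paper's $K_{1}\subset U$, $\varepsilon\to 0$, $m\to\infty$ bookkeeping.

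There is, however, a concrete gap in your construction, precisely at the step you single out as essential. For $n\ge 2$ you require $g_{n}=1$ on $\overline{\Omega_{n-1}}$ and justify the bound $\rho_{p(.),\vartheta}(|\nabla g_{n}|)<\text{cap}_{p(.),\vartheta}(A_{n},\Omega_{n})+\eta$ by the inclusion $\overline{\Omega_{n-1}}\subset A_{n}$. But the hypothesis only gives $\Omega_{n-1}\subset A_{n}\subset\Omega_{n}$; it gives no control over $\partial\Omega_{n-1}$, so $\overline{\Omega_{n-1}}$ need not lie in $A_{n}$ (take, e.g., $\Omega_{n-1}=A_{n}$ open), and it need not even lie in $\Omega_{n}$ or be compact (the $\Omega_{n}$'s may be unbounded). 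Thus Theorem \ref{atif2} and monotonicity P2 do not apply to the pair $(\overline{\Omega_{n-1}},\Omega_{n})$, and the claim that such a $g_{n}$ exists is unjustified; this is not merely a matter of ``routine monotonicity remarks'' for the unbounded case, since the construction is built around that set. The correct object to test against is not the closure of the previous domain but the support of the previous glued function: set $K_{n}=\operatorname{supp} g_{n-1}$ (compact, and contained in $\Omega_{n-1}\subset A_{n}\subset\Omega_{n}$ because $g_{n-1}\in C_{0}(\Omega_{n-1})$), and choose $g_{n}\in R_{p(.),\vartheta}(K_{n},\Omega_{n})$ with $\rho_{p(.),\vartheta}(|\nabla g_{n}|)<\text{cap}_{p(.),\vartheta}(K_{n},\Omega_{n})+\eta\le\text{cap}_{p(.),\vartheta}(A_{n},\Omega_{n})+\eta$. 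Since $g_{n}=1$ on $\operatorname{supp} g_{n-1}$, the disjointness of the sets $\{\nabla g_{n}\ne 0\}$ (up to null sets) is preserved, and the rest of your argument goes through verbatim. This iterative choice is exactly what the paper does with $f_{n}$ and $K_{n}=\operatorname{supp} f_{n-1}$.
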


\begin{proof}
First we can assume that cap$_{p\left( .\right) ,\vartheta }\left(
A_{1},\Omega _{1}\right) <\infty $. Otherwise the proof is clear. Fix an
integer $m$. Also, let $\varepsilon >0$ and take an open set $U\subset
\Omega _{1}$ such that $A_{1}\subset U$ and%
\begin{equation}
\text{cap}_{p\left( .\right) ,\vartheta }\left( U,\Omega _{1}\right) \leq 
\text{cap}_{p\left( .\right) ,\vartheta }\left( A_{1},\Omega _{1}\right)
+\varepsilon .  \label{5}
\end{equation}%
Let $K_{1}\subset U$ be compact and let $f_{1}\in R_{p\left( .\right)
,\vartheta }\left( K_{1},\Omega _{1}\right) $ such that%
\begin{equation*}
\tint\limits_{\Omega _{1}}\left\vert \bigtriangledown f_{1}\left( x\right)
\right\vert ^{p\left( x\right) }\vartheta \left( x\right) dx\leq \text{cap}%
_{p\left( .\right) ,\vartheta }\left( K_{1},\Omega _{1}\right) +\varepsilon .
\end{equation*}%
Also, we can choose $f_{n}\in W_{\vartheta }^{1,p(.)}\left( \Omega \right)
\cap C_{0}\left( \Omega \right) ,$ $n=2,3,...,m$ such that $f_{n}\in
R_{p\left( .\right) ,\vartheta }\left( K_{n},\Omega _{n}\right) ,$ where $%
K_{n}=$supp$f_{n-1}$, and that%
\begin{equation*}
\tint\limits_{\Omega _{n}}\left\vert \bigtriangledown f_{n}\left( x\right)
\right\vert ^{p\left( x\right) }\vartheta \left( x\right) dx\leq \text{cap}%
_{p\left( .\right) ,\vartheta }\left( K_{n},\Omega _{n}\right) +\varepsilon
\end{equation*}%
by induction. Let $a_{n}$ be a sequence of nonnegative numbers with $%
\tsum\limits_{n=1}^{m}a_{n}=1$ and define $g=\tsum%
\limits_{n=1}^{m}a_{n}f_{n}.$ Since the space $W_{\vartheta }^{1,p(.)}\left(
\Omega \right) \cap C_{0}\left( \Omega \right) $ is a vector space, $g\in
W_{\vartheta }^{1,p(.)}\left( \Omega \right) \cap C_{0}\left( \Omega \right) 
$ and then $g\in R_{p\left( .\right) ,\vartheta }\left( K_{1},\Omega \right)
.$ It is easy to see that $K_{n}\subset \Omega _{n-1}\subset A_{n},$ $n\geq
2.$ Using the definition of relative $\left( p\left( .\right) ,\vartheta
\right) -$capacity, we have%
\begin{eqnarray*}
\text{cap}_{p\left( .\right) ,\vartheta }\left( K_{1},\Omega \right) &\leq
&\tint\limits_{\Omega _{1}\cup \Omega _{2}\cup ...}\left\vert
\tsum\limits_{n=1}^{m}a_{n}\bigtriangledown f_{n}\left( x\right) \right\vert
^{p\left( x\right) }\vartheta \left( x\right) dx \\
&\leq &\tsum\limits_{n=1}^{m}a_{n}^{p^{-}}\tint\limits_{\Omega
_{n}}\left\vert \bigtriangledown f_{n}\left( x\right) \right\vert ^{p\left(
x\right) }\vartheta \left( x\right) dx
\end{eqnarray*}%
where $\bigtriangledown f_{n}\neq 0$ are pairwise disjoint. This yields%
\begin{equation*}
\text{cap}_{p\left( .\right) ,\vartheta }\left( K_{1},\Omega \right) \leq
\left( a_{1}^{p^{-}}\text{cap}_{p\left( .\right) ,\vartheta }\left(
K_{1},\Omega _{1}\right) +\tsum\limits_{n=2}^{m}a_{n}^{p^{-}}\text{cap}%
_{p\left( .\right) ,\vartheta }\left( K_{n},\Omega _{n}\right) \right)
+\varepsilon ^{\ast }
\end{equation*}%
where $\varepsilon ^{\ast }=\varepsilon \tsum\limits_{n=1}^{m}a_{n}^{p^{-}}.$
Since $K_{1}\subset U,$ we get cap$_{p\left( .\right) ,\vartheta }\left(
K_{1},\Omega _{1}\right) \leq $cap$_{p\left( .\right) ,\vartheta }\left(
U,\Omega _{1}\right) .$ Also, it follows by the definition of relative $%
\left( p\left( .\right) ,\vartheta \right) -$capacity that cap$_{p\left(
.\right) ,\vartheta }\left( K_{n},\Omega _{n}\right) \leq $cap$_{p\left(
.\right) ,\vartheta }\left( A_{n},\Omega _{n}\right) $ and then $%
\tsum\limits_{n=2}^{m}a_{n}^{p^{-}}$cap$_{p\left( .\right) ,\vartheta
}\left( K_{n},\Omega _{n}\right) \leq \tsum\limits_{n=2}^{m}a_{n}^{p^{-}}$cap%
$_{p\left( .\right) ,\vartheta }\left( A_{n},\Omega _{n}\right) .$ Hence%
\begin{equation}
\text{cap}_{p\left( .\right) ,\vartheta }\left( K_{1},\Omega \right) \leq
a_{1}^{p^{-}}\text{cap}_{p\left( .\right) ,\vartheta }\left( U,\Omega
_{1}\right) +\tsum\limits_{n=2}^{m}a_{n}^{p^{-}}\text{cap}_{p\left( .\right)
,\vartheta }\left( A_{n},\Omega _{n}\right) +\varepsilon ^{\ast }.  \label{7}
\end{equation}%
If we use (\ref{5}) in (\ref{7}), then we have%
\begin{eqnarray*}
\text{cap}_{p\left( .\right) ,\vartheta }\left( K_{1},\Omega \right) &\leq
&\left( a_{1}^{p^{-}}\text{cap}_{p\left( .\right) ,\vartheta }\left(
A_{1},\Omega _{1}\right) \right. \\
&&\left. +\tsum\limits_{n=2}^{m}a_{n}^{p^{-}}\text{cap}_{p\left( .\right)
,\vartheta }\left( A_{n},\Omega _{n}\right) +\varepsilon ^{\ast
}a_{1}^{p^{-}}\right) +\varepsilon ^{\ast } \\
&=&\tsum\limits_{n=1}^{m}a_{n}^{p^{-}}\text{cap}_{p\left( .\right)
,\vartheta }\left( A_{n},\Omega _{n}\right) +\varepsilon ^{\ast \ast }
\end{eqnarray*}%
where $\varepsilon ^{\ast \ast }=\left( 1+a_{1}^{p^{-}}\right) \varepsilon
^{\ast }.$ Letting $\varepsilon ^{\ast \ast }\longrightarrow 0$ we get%
\begin{equation*}
\text{cap}_{p\left( .\right) ,\vartheta }\left( K_{1},\Omega \right) \leq
\tsum\limits_{n=1}^{m}a_{n}^{p^{-}}\text{cap}_{p\left( .\right) ,\vartheta
}\left( A_{n},\Omega _{n}\right) .
\end{equation*}%
Using the definition of infimum and relative $\left( p\left( .\right)
,\vartheta \right) -$ capacity, respectively, then we obtain%
\begin{equation}
\text{cap}_{p\left( .\right) ,\vartheta }\left( A_{1},\Omega \right) \leq 
\text{cap}_{p\left( .\right) ,\vartheta }\left( U,\Omega \right) \leq
\tsum\limits_{n=1}^{m}a_{n}^{p^{-}}\text{cap}_{p\left( .\right) ,\vartheta
}\left( A_{n},\Omega _{n}\right) .  \label{8}
\end{equation}%
Since the equality 
\begin{equation*}
\tsum\limits_{n=1}^{m}\left[ \text{cap}_{p\left( .\right) ,\vartheta }\left(
A_{n},\Omega _{n}\right) ^{\frac{1}{1-p^{-}}}\left( \tsum\limits_{k=1}^{m}%
\text{cap}_{p\left( .\right) ,\vartheta }\left( A_{k},\Omega _{k}\right) ^{%
\frac{1}{1-p^{-}}}\right) ^{-1}\right] =1
\end{equation*}%
holds, we can choose $a_{n}=$cap$_{p\left( .\right) ,\vartheta }\left(
A_{n},\Omega _{n}\right) ^{\frac{1}{1-p^{-}}}\left( \tsum\limits_{k=1}^{m}%
\text{cap}_{p\left( .\right) ,\vartheta }\left( A_{k},\Omega _{k}\right) ^{%
\frac{1}{1-p^{-}}}\right) ^{-1}$ for $n=1,2,..,m.$If cap$_{p\left( .\right)
,\vartheta }\left( A_{n},\Omega _{n}\right) >0$ for every $n=1,2,..,m,$ then
we have%
\begin{eqnarray*}
&&\text{cap}_{p\left( .\right) ,\vartheta }\left( A_{1},\Omega \right) \\
&\leq &\tsum\limits_{n=1}^{m}\text{cap}_{p\left( .\right) ,\vartheta }\left(
A_{n},\Omega _{n}\right) ^{1+\frac{p^{-}}{1-p^{-}}}\left(
\tsum\limits_{k=1}^{m}\text{cap}_{p\left( .\right) ,\vartheta }\left(
A_{k},\Omega _{k}\right) ^{\frac{1}{1-p^{-}}}\right) ^{-p^{-}} \\
&=&\left( \tsum\limits_{n=1}^{m}\text{cap}_{p\left( .\right) ,\vartheta
}\left( A_{n},\Omega _{n}\right) ^{\frac{1}{1-p^{-}}}\right) ^{1-p^{-}}.
\end{eqnarray*}%
When cap$_{p\left( .\right) ,\vartheta }\left( A_{n},\Omega _{n}\right) =0$
for some $n$, then cap$_{p\left( .\right) ,\vartheta }\left( A_{1},\Omega
\right) =0$ as well by considering (\ref{8}), and the proof is obvious. The
claim follows by letting $m\longrightarrow \infty .$
\end{proof}

\begin{remark}
\label{Liuembed}Let $\Omega \subset 
%TCIMACRO{\U{211d} }%
%BeginExpansion
\mathbb{R}
%EndExpansion
^{n}$ be a bounded set. Then, the claim of Proposition 2.4 in \cite{Liu}
satisfies even if $p\left( .\right) =1.$ This yields $L_{\vartheta
}^{p\left( .\right) }\left( \Omega \right) \hookrightarrow L_{\vartheta
}^{1}\left( \Omega \right) $.
\end{remark}

\begin{theorem}
\label{eskap2}If cap$_{p\left( .\right) ,\vartheta }\left( B\left(
x_{0},r\right) ,B\left( x_{0},2r\right) \right) \geq 1$ and $\mu _{\vartheta
}$ is a doubling measure, then we obtain%
\begin{equation*}
C_{1}\mu _{\vartheta }\left( B\left( x_{0},r\right) \right) \leq \text{cap}%
_{p\left( .\right) ,\vartheta }\left( B\left( x_{0},r\right) ,B\left(
x_{0},2r\right) \right) \leq C_{2}\mu _{\vartheta }\left( B\left(
x_{0},r\right) \right)
\end{equation*}%
such that $C_{1}=\frac{C}{r}$ and $C_{2}=2^{p^{+}}c_{d}\max \left\{
r^{-p^{-}},r^{-p^{+}}\right\} .$
\end{theorem}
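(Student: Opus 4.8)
The plan is to prove the two inequalities separately. The upper bound holds unconditionally and is obtained by testing the relative $(p(.),\vartheta)$-capacity with an explicit Lipschitz cut-off; the lower bound combines the weighted Poincar\'e inequality, the embedding $L^{p(.)}_{\vartheta}(\Omega)\hookrightarrow L^{1}_{\vartheta}(\Omega)$ of Remark \ref{Liuembed}, and the standard norm--modular inequality, and the hypothesis $\text{cap}_{p(.),\vartheta}(B(x_0,r),B(x_0,2r))\ge 1$ enters only at the very end, to convert a power-type estimate into a linear one.

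\emph{Upper bound.} Since $B(x_0,r)\subset\overline{B(x_0,r)}\subset B(x_0,2r)$, property (P2) gives
\[
\text{cap}_{p(.),\vartheta}(B(x_0,r),B(x_0,2r))\le\text{cap}_{p(.),\vartheta}(\overline{B(x_0,r)},B(x_0,2r)),
\]
and by Theorem \ref{atif2} together with Theorem \ref{atif1} the right-hand side is the infimum of $\rho_{p(.),\vartheta}(|\nabla f|)$ over all $f\in W^{1,p(.)}_{\vartheta}(B(x_0,2r))\cap C_{0}(B(x_0,2r))$ with $f\ge 1$ on $\overline{B(x_0,r)}$. I would test with the function $f(x)=\min\{1,(\tfrac{3r-2|x-x_0|}{r})_{+}\}$, which equals $1$ on $\overline{B(x_0,r)}$, is supported in $\overline{B(x_0,3r/2)}\subset B(x_0,2r)$, is Lipschitz, and has $|\nabla f|=2/r$ on $B(x_0,3r/2)\setminus B(x_0,r)$ and $|\nabla f|=0$ elsewhere; hence it is admissible and
\[
\text{cap}_{p(.),\vartheta}(\overline{B(x_0,r)},B(x_0,2r))\le\int_{B(x_0,3r/2)\setminus B(x_0,r)}\Big(\tfrac{2}{r}\Big)^{p(x)}\vartheta(x)\,dx.
\]
Using $(2/r)^{p(x)}\le 2^{p^{+}}\max\{r^{-p^{-}},r^{-p^{+}}\}$ pointwise and then the doubling property $\mu_{\vartheta}(B(x_0,3r/2))\le\mu_{\vartheta}(B(x_0,2r))\le c_{d}\mu_{\vartheta}(B(x_0,r))$ yields the upper bound with $C_{2}=2^{p^{+}}c_{d}\max\{r^{-p^{-}},r^{-p^{+}}\}$.

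\emph{Lower bound.} Fix a compact set $K\subset B(x_0,2r)$ and an admissible $u$ for $\text{cap}^{*}_{p(.),\vartheta}(K,B(x_0,2r))$ with $u\ge 1$ on $K$ and $u\ge 0$ (justified by Theorem \ref{atif1}); extend $u$ by zero. Since $u\ge 1$ on $K$, the weighted Poincar\'e inequality on $\Omega=B(x_0,2r)$ (of diameter $4r$), then Remark \ref{Liuembed}, and then the norm--modular inequality $\|g\|_{p(.),\vartheta}\le\max\{\rho_{p(.),\vartheta}(g)^{1/p^{-}},\rho_{p(.),\vartheta}(g)^{1/p^{+}}\}$ applied to $g=|\nabla u|$ give
\[
\mu_{\vartheta}(K)\le\int_{B(x_0,2r)}u\,\vartheta\,dx\le Ar\,\max\Big\{\rho_{p(.),\vartheta}(|\nabla u|)^{1/p^{-}},\rho_{p(.),\vartheta}(|\nabla u|)^{1/p^{+}}\Big\},
\]
where the constant $A$ depends only on the Poincar\'e and embedding constants (the latter for the bounded domain $B(x_0,2r)$); in this step the Poincar\'e inequality, stated for $C_{0}^{\infty}$, is first applied to mollifications of $u$ and the estimate is passed to the limit. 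Putting $s_{K}=\mu_{\vartheta}(K)/(Ar)$ and taking the infimum over $u$ gives $\text{cap}^{*}_{p(.),\vartheta}(K,B(x_0,2r))\ge\min\{s_{K}^{\,p^{-}},s_{K}^{\,p^{+}}\}$. For any open $V$ with $B(x_0,r)\subset V\subset B(x_0,2r)$, every closed ball $\overline{B(x_0,\rho)}$ with $\rho<r$ is a compact subset of $V$; letting $\rho\uparrow r$ so that $\mu_{\vartheta}(\overline{B(x_0,\rho)})\uparrow\mu_{\vartheta}(B(x_0,r))$ and using that $t\mapsto\min\{t^{p^{-}},t^{p^{+}}\}$ is continuous and nondecreasing, one obtains $\text{cap}_{p(.),\vartheta}(V,B(x_0,2r))\ge\min\{s_{0}^{\,p^{-}},s_{0}^{\,p^{+}}\}$ with $s_{0}=\mu_{\vartheta}(B(x_0,r))/(Ar)$; taking the infimum over $V$ (property (P3)) yields $\text{cap}_{p(.),\vartheta}(B(x_0,r),B(x_0,2r))\ge\min\{s_{0}^{\,p^{-}},s_{0}^{\,p^{+}}\}$. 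If $s_{0}\ge 1$ then $\min\{s_{0}^{\,p^{-}},s_{0}^{\,p^{+}}\}=s_{0}^{\,p^{-}}\ge s_{0}$; if $s_{0}<1$ then $C_{1}\mu_{\vartheta}(B(x_0,r))=s_{0}<1\le\text{cap}_{p(.),\vartheta}(B(x_0,r),B(x_0,2r))$ by the hypothesis. In either case $\text{cap}_{p(.),\vartheta}(B(x_0,r),B(x_0,2r))\ge C_{1}\mu_{\vartheta}(B(x_0,r))$ with $C_{1}=1/(Ar)=C/r$.

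\emph{Main obstacle.} The delicate point is the lower bound: the Poincar\'e inequality controls only the $L^{1}_{\vartheta}$-norm of $\nabla u$, whereas the capacity is measured by the $p(.)$-modular, and the comparison between the Luxemburg norm and the modular is linear only in the regime where the modular is at least $1$. This forces the dichotomy $s_{0}\ge 1$ versus $s_{0}<1$ above, and it is exactly in the second case that the standing assumption $\text{cap}\ge 1$ is used. A minor technical point is extending the quoted Poincar\'e inequality from $C_{0}^{\infty}$ to the (merely continuous, compactly supported) admissible Sobolev functions, which is done by mollification.
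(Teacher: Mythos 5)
Your proof is correct, and the skeleton is the same as the paper's: an explicit Lipschitz cut-off plus the doubling constant gives the upper bound; the lower bound combines the $L^{1}_{\vartheta}$ Poincar\'{e} inequality, the embedding $L^{p(.)}_{\vartheta}(B(x_0,2r))\hookrightarrow L^{1}_{\vartheta}(B(x_0,2r))$, and a norm--modular comparison. The one genuine difference is where you invoke the hypothesis $\mathrm{cap}_{p(.),\vartheta}(B(x_0,r),B(x_0,2r))\ge 1$. The paper uses it early, at the level of individual test functions: for $f\in R^{*}_{p(.),\vartheta}(B(x_0,s),B(x_0,2r))$ it asserts $\rho_{p(.),\vartheta}(|\nabla f|)\ge 1$ and hence $\|\nabla f\|_{p(.),\vartheta}\le\rho_{p(.),\vartheta}(|\nabla f|)$, giving a linear estimate $\mu_{\vartheta}(B(x_0,s))\le Cr\,\rho_{p(.),\vartheta}(|\nabla f|)$ directly. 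Strictly speaking, since $f$ is only admissible for the smaller ball $\overline{B(x_0,s)}$ with $s<r$, all one knows a priori is $\rho_{p(.),\vartheta}(|\nabla f|)\ge\mathrm{cap}_{p(.),\vartheta}(\overline{B(x_0,s)},B(x_0,2r))$, which need not be $\ge 1$ for any fixed $s<r$; the paper's step really only holds in the limit $s\to r$, so there is a small unspoken limiting argument. Your version instead applies the unconditional norm--modular inequality $\|g\|\le\max\{\rho(g)^{1/p^-},\rho(g)^{1/p^+}\}$, inverts to obtain the power-type bound $\mathrm{cap}^{*}\ge\min\{s_K^{p^-},s_K^{p^+}\}$, passes to the limit, and only at the very end uses the hypothesis through the dichotomy $s_0\ge 1$ versus $s_0<1$. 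This is slightly more roundabout but cleanly avoids the issue above; both routes produce the same constants up to the inessential choice of $C$.
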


\begin{proof}
Let $f\in C_{0}^{\infty }\left( B\left( x_{0},2r\right) \right) $ is a
function such that $f=1$ in $B\left( x_{0},r\right) $ and $\left\vert
\bigtriangledown f\right\vert \leq \frac{2}{r}.$ Since $\mu _{\vartheta }$
is doubling we get%
\begin{eqnarray}
\text{cap}_{p\left( .\right) ,\vartheta }\left( B\left( x_{0},r\right)
,B\left( x_{0},2r\right) \right) &\leq &\dint\limits_{B\left(
x_{0},2r\right) }\left\vert \bigtriangledown f\left( x\right) \right\vert
^{p\left( x\right) }\vartheta \left( x\right) dx  \notag \\
&\leq &2^{p^{+}}c_{d}\max \left\{ r^{-p^{-}},r^{-p^{+}}\right\} \mu
_{\vartheta }\left( B\left( x_{0},r\right) \right) .  \label{doub4}
\end{eqnarray}%
On the other hand, let $0<s<r$ and take a function $f\in R_{p\left( .\right)
,\vartheta }^{\ast }\left( B\left( x_{0},s\right) ,B\left( x_{0},2r\right)
\right) $. Since cap$_{p\left( .\right) ,\vartheta }\left( B\left(
x_{0},r\right) ,B\left( x_{0},2r\right) \right) \geq 1,$ it is easy to see
that $\rho _{L_{\vartheta }^{p\left( .\right) }\left( B\left(
x_{0},2r\right) \right) }\left( \left\vert \bigtriangledown f\right\vert
\right) \geq 1$ and then we have $\left\Vert \bigtriangledown f\right\Vert
_{L_{\vartheta }^{p\left( .\right) }\left( B\left( x_{0},2r\right) \right)
}<\rho _{L_{\vartheta }^{p\left( .\right) }\left( B\left( x_{0},2r\right)
\right) }\left( \left\vert \bigtriangledown f\right\vert \right) ,$ see \cite%
{Liu}. Hence if we use the Poincar\'{e} inequality in $L_{\vartheta
}^{1}\left( B\left( x_{0},2r\right) \right) $ and the embedding $%
L_{\vartheta }^{p\left( .\right) }\left( B\left( x_{0},2r\right) \right)
\hookrightarrow L_{\vartheta }^{1}\left( B\left( x_{0},2r\right) \right) $,
then we obtain 
\begin{eqnarray}
\mu _{\vartheta }\left( B\left( x_{0},s\right) \right) &\leq
&cr\dint\limits_{B\left( x_{0},2r\right) }\left\vert \bigtriangledown
f\left( x\right) \right\vert \vartheta \left( x\right) dx\leq
crc_{1}\left\Vert \bigtriangledown f\right\Vert _{L_{\vartheta }^{p\left(
.\right) }\left( B\left( x_{0},2r\right) \right) }  \notag \\
&\leq &Cr\dint\limits_{B\left( x_{0},2r\right) }\left\vert \bigtriangledown
f\left( x\right) \right\vert ^{p\left( x\right) }\vartheta \left( x\right)
dx.  \label{doub2}
\end{eqnarray}%
If we take the infimum over $f\in R_{p\left( .\right) ,\vartheta }^{\ast
}\left( B\left( x_{0},s\right) ,B\left( x_{0},2r\right) \right) $ and
letting $s\rightarrow r$ from the inequality (\ref{doub2}), then we get%
\begin{equation}
\mu _{\vartheta }\left( B\left( x_{0},r\right) \right) \leq Cr\text{cap}%
_{p\left( .\right) ,\vartheta }\left( B\left( x_{0},r\right) ,B\left(
x_{0},2r\right) \right) .  \label{doub1}
\end{equation}%
We conclude the proof considering the inequalities (\ref{doub4}) and (\ref%
{doub1})$.$ Hence it is clear that we can write $\mu _{\vartheta }\left(
B\left( x_{0},r\right) \right) \approx $cap$_{p\left( .\right) ,\vartheta
}\left( B\left( x_{0},r\right) ,B\left( x_{0},2r\right) \right) $ under the
hypotheses.
\end{proof}

\begin{remark}
\label{Trick}Note that the equivalence in Theorem \ref{eskap2} is not true
in general. But if we use the following trick in inequality (\ref{doub2})%
\begin{eqnarray*}
\mu _{\vartheta }\left( B\left( x_{0},s\right) \right) &\leq
&cr\dint\limits_{B\left( x_{0},2r\right) }\left\vert \bigtriangledown
f\left( x\right) \right\vert \vartheta \left( x\right) dx \\
&\leq &cr\dint\limits_{B\left( x_{0},2r\right) }\max \left\{ 1,\left\vert
\bigtriangledown f\left( x\right) \right\vert \right\} ^{p\left( x\right)
}\vartheta \left( x\right) dx \\
&\leq &cr\dint\limits_{B\left( x_{0},2r\right) }\left( 1+\left\vert
\bigtriangledown f\left( x\right) \right\vert ^{p\left( x\right) }\right)
\vartheta \left( x\right) dx \\
&\leq &cr\left( \mu _{\vartheta }\left( B\left( x_{0},2r\right) \right)
+\rho _{p\left( .\right) ,\vartheta }\left( \left\vert \bigtriangledown
f\right\vert \right) \right) ,
\end{eqnarray*}%
then this will allow for obtaining some estimates even in case cap$_{p\left(
.\right) ,\vartheta }\left( B\left( x_{0},r\right) ,B\left( x_{0},2r\right)
\right) <1.$
\end{remark}

\begin{theorem}
\label{eskap1}If $A\subset B\left( x_{0},r\right) ,$ cap$_{p\left( .\right)
,\vartheta }\left( A,B\left( x_{0},4r\right) \right) \geq 1$ and $0<r\leq
s\leq 2r,$ then%
\begin{equation*}
\frac{1}{C}\text{cap}_{p\left( .\right) ,\vartheta }\left( A,B\left(
x_{0},2r\right) \right) \leq \text{cap}_{p\left( .\right) ,\vartheta }\left(
A,B\left( x_{0},2s\right) \right) \leq \text{cap}_{p\left( .\right)
,\vartheta }\left( A,B\left( x_{0},2r\right) \right)
\end{equation*}%
such that $C=2^{p^{+}}+2^{2p^{+}+1}cc_{1}\max \left\{
r^{1-p^{-}},r^{1-p^{+}}\right\} $.
\end{theorem}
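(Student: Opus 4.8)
The right-hand inequality is immediate from the monotonicity of the relative capacity (property (P2)): as $r\le s$ we have $A\subset A\subset B(x_{0},2r)\subset B(x_{0},2s)$, so $\text{cap}_{p(.),\vartheta}\left(A,B(x_{0},2s)\right)\le\text{cap}_{p(.),\vartheta}\left(A,B(x_{0},2r)\right)$. For the left-hand inequality the plan is: (i) prove an estimate of the form $\text{cap}_{p(.),\vartheta}\left(K,B(x_{0},2r)\right)\le C_{1}\,\text{cap}_{p(.),\vartheta}\left(K,B(x_{0},2s)\right)+C_{2}$ for \emph{every} compact $K\subset B(x_{0},r)$, with $C_{1}=2^{p^{+}}+2^{2p^{+}}cc_{1}M$, $C_{2}=2^{2p^{+}}cc_{1}M$, $M=\max\left\{r^{1-p^{-}},r^{1-p^{+}}\right\}$, and $c,c_{1}$ the Poincar\'e and embedding constants of Remark \ref{Liuembed}; (ii) take the supremum over compact subsets to get the same estimate for open $U\subset B(x_{0},r)$; (iii) for an arbitrary $A\subset B(x_{0},r)$ approximate it from outside by open $V\subset B(x_{0},2s)$ and apply (ii) to $U=V\cap B(x_{0},r)\supset A$. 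The hypothesis $\text{cap}_{p(.),\vartheta}\left(A,B(x_{0},4r)\right)\ge 1$ is used only in step (iii): by (P2) it gives $\text{cap}_{p(.),\vartheta}\left(U,B(x_{0},2s)\right)\ge\text{cap}_{p(.),\vartheta}\left(U,B(x_{0},4r)\right)\ge\text{cap}_{p(.),\vartheta}\left(A,B(x_{0},4r)\right)\ge 1$, so the additive term absorbs, $C_{1}x+C_{2}\le\left(C_{1}+C_{2}\right)x$ for $x\ge 1$, and $C_{1}+C_{2}=2^{p^{+}}+2^{2p^{+}+1}cc_{1}M=C$.

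For (i), fix a compact $K\subset B(x_{0},r)$ and take $f\in R_{p(.),\vartheta}^{\ast}\left(K,B(x_{0},2s)\right)$ with $0\le f\le 1$, which is legitimate by Theorem \ref{atif1} and the truncation remark following the definition of the relative capacity. Fix a cut-off $\eta\in C_{0}^{\infty}\left(B(x_{0},2r)\right)$ with $0\le\eta\le 1$, $\eta\equiv 1$ on $B(x_{0},r)$, $\left\vert \nabla\eta\right\vert \le 2/r$, and set $g=\eta f$. Then $g\ge 1$ on $K$ (where $\eta\equiv 1$ and $f\ge 1$) and $g\in W_{\vartheta}^{1,p(.)}\left(B(x_{0},2r)\right)\cap C_{0}\left(B(x_{0},2r)\right)$, so $g\in R_{p(.),\vartheta}^{\ast}\left(K,B(x_{0},2r)\right)$ and Theorem \ref{atif1} gives
\[
\text{cap}_{p(.),\vartheta}^{\ast}\left(K,B(x_{0},2r)\right)\le\int_{B(x_{0},2r)}\left\vert \nabla g(x)\right\vert ^{p(x)}\vartheta(x)\,dx.
\]
Write $\nabla g=\eta\nabla f+f\nabla\eta$; the gradient of $\eta$ is supported in the annulus $D=B(x_{0},2r)\setminus B(x_{0},r)$. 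With $\left(a+b\right)^{p(x)}\le 2^{p^{+}}\left(a^{p(x)}+b^{p(x)}\right)$, $0\le\eta\le 1$, and $B(x_{0},2r)\subset B(x_{0},2s)$,
\[
\int_{B(x_{0},2r)}\left\vert \nabla g\right\vert ^{p(x)}\vartheta\,dx\le 2^{p^{+}}\int_{B(x_{0},2s)}\left\vert \nabla f\right\vert ^{p(x)}\vartheta\,dx+2^{p^{+}}\int_{D}f^{p(x)}\left\vert \nabla\eta\right\vert ^{p(x)}\vartheta\,dx,
\]
and everything reduces to the annulus term.

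This is the crux. Since $0\le f\le 1$ and $p(x)\ge 1$ we have the decisive pointwise bound $f^{p(x)}\le f$, while $\left\vert \nabla\eta\right\vert ^{p(x)}\le 2^{p^{+}}\max\left\{r^{-p^{-}},r^{-p^{+}}\right\}$; hence $\int_{D}f^{p(x)}\left\vert \nabla\eta\right\vert ^{p(x)}\vartheta\,dx\le 2^{p^{+}}\max\left\{r^{-p^{-}},r^{-p^{+}}\right\}\int_{B(x_{0},2s)}f\,\vartheta\,dx$. Now $s\le 2r$ forces $\mathrm{supp}\,f\subset B(x_{0},2s)\subset B(x_{0},4r)$, so the Poincar\'e inequality in $L_{\vartheta}^{1}$ (exactly as applied in the proof of Theorem \ref{eskap2}) gives $\int_{B(x_{0},2s)}f\,\vartheta\,dx\le cr\int_{B(x_{0},2s)}\left\vert \nabla f\right\vert \vartheta\,dx$; then the embedding $L_{\vartheta}^{p(.)}\left(B(x_{0},2s)\right)\hookrightarrow L_{\vartheta}^{1}\left(B(x_{0},2s)\right)$ from Remark \ref{Liuembed} and the modular--norm inequality $\left\Vert \nabla f\right\Vert _{L_{\vartheta}^{p(.)}\left(B(x_{0},2s)\right)}\le 1+\int_{B(x_{0},2s)}\left\vert \nabla f\right\vert ^{p(x)}\vartheta\,dx$ yield $\int_{B(x_{0},2s)}f\,\vartheta\,dx\le cc_{1}r\left(1+\int_{B(x_{0},2s)}\left\vert \nabla f\right\vert ^{p(x)}\vartheta\,dx\right)$. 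Collecting the pieces---the factor $r$ from Poincar\'e converting $r^{-p^{\pm}}$ into $r^{1-p^{\pm}}$, which is the origin of the exponent $1-p^{\pm}$ in $C$---gives $\int_{B(x_{0},2r)}\left\vert \nabla g\right\vert ^{p(x)}\vartheta\,dx\le C_{1}\int_{B(x_{0},2s)}\left\vert \nabla f\right\vert ^{p(x)}\vartheta\,dx+C_{2}$. Taking the infimum over all admissible $f$ and invoking Theorem \ref{atif2} to identify $\text{cap}^{\ast}$ with $\text{cap}$ on the compact set $K$ proves (i); steps (ii) and (iii) then finish the proof as explained above.

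The only genuine obstacle is the annulus term: the naive bound $f\le 1$ there would leave $r^{-p^{\pm}}\mu_{\vartheta}(D)$, and $\mu_{\vartheta}(D)$ is \emph{not} dominated by the capacity when $K$ is small, so the inequality would fail; the resolution is to keep the factor $f$, exploit $f^{p(x)}\le f$ so that the linear Poincar\'e inequality applies to $f$ directly, and pass to the $L_{\vartheta}^{p(.)}$-norm only afterwards. The cost of the final passage from that norm back to the modular is the additive term $C_{2}$, and it is precisely this that makes the hypothesis $\text{cap}_{p(.),\vartheta}\left(A,B(x_{0},4r)\right)\ge 1$ necessary (through $B(x_{0},2s)\subset B(x_{0},4r)$) and explains why the enveloping ball in the statement is $B(x_{0},4r)$ rather than $B(x_{0},2s)$.
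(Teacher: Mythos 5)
Your proof is correct and follows essentially the same strategy as the paper: cut-off function $\eta$ on $B(x_0,2r)$, product rule, the pointwise bound $f^{p(x)}\le f$ for $0\le f\le 1$, and then Poincar\'e in $L_\vartheta^1$ followed by the embedding $L_\vartheta^{p(.)}\hookrightarrow L_\vartheta^1$; the resulting constant $C=2^{p^+}+2^{2p^{+}+1}cc_1\max\{r^{1-p^-},r^{1-p^+}\}$ agrees with the paper's. The one genuine organizational difference is where the hypothesis $\mathrm{cap}_{p(.),\vartheta}(A,B(x_0,4r))\ge 1$ is used. The paper reduces at once to $s=2r$, tests with $f\in R^{\ast}_{p(.),\vartheta}(A,B(x_0,4r))$, and invokes the hypothesis early: since the infimum defining the capacity is $\ge 1$, every admissible $f$ satisfies $\rho(|\nabla f|)\ge 1$, which upgrades the Luxemburg norm to the modular ($\|\nabla f\|\le\rho(|\nabla f|)$) and yields a purely multiplicative bound directly. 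You instead keep $s$ general, use the unconditional inequality $\|\nabla f\|\le 1+\rho(|\nabla f|)$, obtain a multiplicative-plus-additive estimate $\mathrm{cap}(K,B(x_0,2r))\le C_1\mathrm{cap}(K,B(x_0,2s))+C_2$ that holds without any capacity hypothesis, and only at the very end use $\mathrm{cap}(A,B(x_0,2s))\ge\mathrm{cap}(A,B(x_0,4r))\ge 1$ to absorb $C_2$. Your version has the mild advantage of isolating exactly where the lower bound on the capacity is indispensable, and your explicit compact-to-open-to-arbitrary passage (steps (ii)--(iii)) is more careful than the paper's shorthand ``since the relative capacity is Choquet we may assume $A$ compact''; the paper's version is shorter because it collapses those reductions into one sentence.
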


\begin{proof}
Since $B\left( x_{0},2r\right) \subset B\left( x_{0},2s\right) ,$ it is
clear that%
\begin{equation*}
\text{cap}_{p\left( .\right) ,\vartheta }\left( A,B\left( x_{0},2s\right)
\right) \leq \text{cap}_{p\left( .\right) ,\vartheta }\left( A,B\left(
x_{0},2r\right) \right) .
\end{equation*}%
Thus, we need to satisfy the first inequality in case $s=2r.$ Because of the
fact that relative $\left( p\left( .\right) ,\vartheta \right) $- capacity
is a Choquet capacity, we can suppose that $A$ is compact. Let $g\in
C_{0}^{\infty }\left( B\left( x_{0},2r\right) \right) ,$ $0\leq g\leq 1$ is
a cut-off function such that $g=1$ in $B\left( x_{0},r\right) $ and $%
\left\vert \bigtriangledown g\right\vert \leq \frac{2}{r}.$ Also, let the
function $f\in R_{p\left( .\right) ,\vartheta }^{\ast }\left( A,B\left(
x_{0},4r\right) \right) $ be given. If we use the definition of $R_{p\left(
.\right) ,\vartheta }^{\ast }\left( A,B\left( x_{0},4r\right) \right) $ and
the function $g$ and also the fact that the space $C_{0}^{\infty }\left(
B\left( x_{0},2r\right) \right) $ is dense in $W_{\vartheta }^{1,p\left(
.\right) }\left( B\left( x_{0},2r\right) \right) ,$ then we get that $gf\in
W_{\vartheta }^{1,p(.)}\left( B\left( x_{0},2r\right) \right) \cap
C_{0}\left( B\left( x_{0},2r\right) \right) $ such that $gf=1$ on $A$. Thus $%
gf\in R_{p\left( .\right) ,\vartheta }^{\ast }\left( A,B\left(
x_{0},2r\right) \right) .$ Therefore, we have%
\begin{eqnarray*}
&&\text{cap}_{p\left( .\right) ,\vartheta }\left( A,B\left( x_{0},2r\right)
\right) \\
&\leq &2^{p^{+}}\dint\limits_{B\left( x_{0},2r\right) }\left\vert
\bigtriangledown f\left( x\right) \right\vert ^{p\left( x\right) }\vartheta
\left( x\right) dx \\
&&+2^{2p^{+}}\max \left\{ r^{-p^{-}},r^{-p^{+}}\right\}
\dint\limits_{B\left( x_{0},2r\right) }\left\vert f\left( x\right)
\right\vert ^{p\left( x\right) }\vartheta \left( x\right) dx \\
&\leq &2^{p^{+}}\dint\limits_{B\left( x_{0},4r\right) }\left\vert
\bigtriangledown f\left( x\right) \right\vert ^{p\left( x\right) }\vartheta
\left( x\right) dx \\
&&+2^{2p^{+}}\max \left\{ r^{-p^{-}},r^{-p^{+}}\right\}
\dint\limits_{B\left( x_{0},4r\right) }\left\vert f\left( x\right)
\right\vert ^{p\left( x\right) }\vartheta \left( x\right) dx.
\end{eqnarray*}%
Since cap$_{p\left( .\right) ,\vartheta }\left( A,B\left( x_{0},4r\right)
\right) \geq 1,$ we have $\left\Vert \bigtriangledown f\right\Vert
_{L_{\vartheta }^{p\left( .\right) }\left( B\left( x_{0},4r\right) \right)
}<\rho _{L_{\vartheta }^{p\left( .\right) }\left( B\left( x_{0},4r\right)
\right) }\left( \left\vert \bigtriangledown f\right\vert \right) ,$ see \cite%
{Liu}. Hence if we use the Poincar\'{e} inequality in $L_{\vartheta
}^{1}\left( B\left( x_{0},4r\right) \right) $ and the embedding $%
L_{\vartheta }^{p\left( .\right) }\left( B\left( x_{0},4r\right) \right)
\hookrightarrow L_{\vartheta }^{1}\left( B\left( x_{0},4r\right) \right) $,
then we obtain%
\begin{eqnarray*}
\dint\limits_{B\left( x_{0},4r\right) }\left\vert f\left( x\right)
\right\vert ^{p\left( x\right) }\vartheta \left( x\right) dx &\leq
&2rc\dint\limits_{B\left( x_{0},4r\right) }\left\vert \bigtriangledown
f\left( x\right) \right\vert \vartheta \left( x\right) dx \\
&\leq &2rcc_{1}\left\Vert \bigtriangledown f\right\Vert _{L_{\vartheta
}^{p\left( .\right) }\left( B\left( x_{0},4r\right) \right) } \\
&\leq &2rcc_{1}\rho _{L_{\vartheta }^{p\left( .\right) }\left( B\left(
x_{0},4r\right) \right) }\left( \left\vert \bigtriangledown f\right\vert
\right) .
\end{eqnarray*}%
This yields%
\begin{equation*}
\text{cap}_{p\left( .\right) ,\vartheta }\left( A,B\left( x_{0},2r\right)
\right) \leq C\dint\limits_{B\left( x_{0},4r\right) }\left\vert
\bigtriangledown f\left( x\right) \right\vert ^{p\left( x\right) }\vartheta
\left( x\right) dx
\end{equation*}%
where $C=2^{p^{+}}+2^{2p^{+}+1}cc_{1}\max \left\{
r^{1-p^{-}},r^{1-p^{+}}\right\} .$ The proof is completed by taking the
infimum over $f\in R_{p\left( .\right) ,\vartheta }^{\ast }\left( A,B\left(
x_{0},4r\right) \right) $ from the last inequality. Hence it is clear that
we can write cap$_{p\left( .\right) ,\vartheta }\left( A,B\left(
x_{0},2s\right) \right) \approx $cap$_{p\left( .\right) ,\vartheta }\left(
A,B\left( x_{0},2r\right) \right) $ under the hypotheses.
\end{proof}

\begin{remark}
By the same arguments as in Theorem \ref{eskap2} the equivalence in Theorem %
\ref{eskap1} is not true in general. But if we use the same trick in Remark %
\ref{Trick}, then it can be found some estimates even in case cap$_{p\left(
.\right) ,\vartheta }\left( A,B\left( x_{0},4r\right) \right) <1.$
\end{remark}

\begin{theorem}
Let $1<p^{-}\leq p\left( .\right) \leq p^{+}<\infty ,$ $1<q^{-}\leq q\left(
.\right) \leq q^{+}<\infty $ and $\frac{1}{p\left( .\right) }+\frac{1}{%
q\left( .\right) }=1.$ Assume that $\vartheta $ is a weight function such
that $\vartheta \left( x\right) \geq 1$ for $x\in 
%TCIMACRO{\U{211d} }%
%BeginExpansion
\mathbb{R}
%EndExpansion
^{n}.$ If $0<r_{1}<r_{2}<\infty $ and cap$_{p\left( .\right) ,\vartheta
}\left( A\left( x_{0};r_{1},r_{2}\right) ,B\left( x_{0},r_{2}\right) \right)
\geq 1,$ then%
\begin{equation*}
\omega _{n-1}\leq C\text{cap}_{p\left( .\right) ,\vartheta }\left( B\left(
x_{0},r_{1}\right) ,B\left( x_{0},r_{2}\right) \right)
\end{equation*}%
where $A\left( x_{0};r_{1},r_{2}\right) $ is the annulus $B\left(
x_{0},r_{2}\right) -B\left( x_{0},r_{1}\right) .$ Here%
\begin{eqnarray*}
C &=&c_{h}\max \left\{ \left[ \max \left\{ r_{2}^{\left( 1-n\right)
q^{+}},r_{2}^{\left( 1-n\right) q^{-}}\right\} \left\vert A\left(
x_{0};r_{1},r_{2}\right) \right\vert \right] ^{\frac{1}{q^{+}}},\right. \\
&&\left. \left[ \max \left\{ r_{2}^{\left( 1-n\right) q^{+}},r_{2}^{\left(
1-n\right) q^{-}}\right\} \left\vert A\left( x_{0};r_{1},r_{2}\right)
\right\vert \right] ^{\frac{1}{q^{-}}}\right\} .
\end{eqnarray*}%
where $\left\vert A\left( x_{0};r_{1},r_{2}\right) \right\vert $ is the
Lebesgue measure of $A\left( x_{0};r_{1},r_{2}\right) $ and $c_{h}$ is the
constant of H\"{o}lder inequality for variable exponent Lebesgue spaces$.$
\end{theorem}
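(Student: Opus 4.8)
The plan is to transport to the weighted variable exponent setting the classical lower bound for the capacity of a spherical ring, the genuinely new ingredients being a weighted form of H\"{o}lder's inequality and the standing hypotheses $\vartheta \geq 1$ and $\frac{1}{p\left( .\right) }+\frac{1}{q\left( .\right) }=1$. First I would reduce to compact condensers: since $B\left( x_{0},r_{1}\right) $ is a Borel set, Corollary \ref{Chouquet} and the cofinality of the closed balls $\overline{B\left( x_{0},\rho \right) }$, $\rho <r_{1}$, among the compact subsets of $B\left( x_{0},r_{1}\right) $ give
\begin{equation*}
\text{cap}_{p\left( .\right) ,\vartheta }\left( B\left( x_{0},r_{1}\right) ,B\left( x_{0},r_{2}\right) \right) =\lim_{\rho \uparrow r_{1}}\text{cap}_{p\left( .\right) ,\vartheta }\left( \overline{B\left( x_{0},\rho \right) },B\left( x_{0},r_{2}\right) \right) ,
\end{equation*}
and by Theorem \ref{atif2} each term on the right equals $\text{cap}_{p\left( .\right) ,\vartheta }^{\ast }\left( \overline{B\left( x_{0},\rho \right) },B\left( x_{0},r_{2}\right) \right) $. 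Hence it suffices to prove, for each fixed $\rho \in \left( 0,r_{1}\right) $, an inequality $\omega _{n-1}\leq C_{\rho }\,\text{cap}_{p\left( .\right) ,\vartheta }^{\ast }\left( \overline{B\left( x_{0},\rho \right) },B\left( x_{0},r_{2}\right) \right) $ with $C_{\rho }$ tending to the constant $C$ of the statement as $\rho \uparrow r_{1}$ (so that $A\left( x_{0};\rho ,r_{2}\right) $ increases to $A\left( x_{0};r_{1},r_{2}\right) $), and then to pass to the limit. So fix $\rho $ and $\varepsilon >0$, and pick $f\in R_{p\left( .\right) ,\vartheta }^{\ast }\left( \overline{B\left( x_{0},\rho \right) },B\left( x_{0},r_{2}\right) \right) $ with $0\leq f\leq 1$ and $\int_{B\left( x_{0},r_{2}\right) }\left\vert \nabla f\left( x\right) \right\vert ^{p\left( x\right) }\vartheta \left( x\right) dx\leq \text{cap}_{p\left( .\right) ,\vartheta }^{\ast }\left( \overline{B\left( x_{0},\rho \right) },B\left( x_{0},r_{2}\right) \right) +\varepsilon $; using the denseness assumed throughout the paper, just as in the proofs of Theorems \ref{Yogunluk} and \ref{eskap1}, we may take $f$ smooth (alternatively, Sobolev functions are absolutely continuous on almost every ray, which is all we use).

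The core is a radial fundamental-theorem-of-calculus estimate. Since $f\equiv 1$ on $\overline{B\left( x_{0},\rho \right) }$ and $f$ vanishes near $\partial B\left( x_{0},r_{2}\right) $, for every $\omega \in S^{n-1}$ we have $f\left( x_{0}+\rho \omega \right) \geq 1$ and $f\left( x_{0}+t\omega \right) =0$ for $t$ near $r_{2}$, hence
\begin{equation*}
1\leq \left\vert \int_{\rho }^{r_{2}}\frac{d}{dt}f\left( x_{0}+t\omega \right) dt\right\vert \leq \int_{\rho }^{r_{2}}\left\vert \nabla f\left( x_{0}+t\omega \right) \right\vert dt .
\end{equation*}
Integrating over $S^{n-1}$ and passing to polar coordinates about $x_{0}$ yields
\begin{equation*}
\omega _{n-1}\leq \int_{S^{n-1}}\int_{\rho }^{r_{2}}\left\vert \nabla f\left( x_{0}+t\omega \right) \right\vert dt\,d\omega =\int_{A\left( x_{0};\rho ,r_{2}\right) }\left\vert \nabla f\left( x\right) \right\vert \left\vert x-x_{0}\right\vert ^{1-n}dx .
\end{equation*}

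Next I bring in the exponents and the weight. On $A:=A\left( x_{0};\rho ,r_{2}\right) $ write $\left\vert \nabla f\right\vert \left\vert x-x_{0}\right\vert ^{1-n}=\left( \left\vert \nabla f\right\vert \vartheta ^{1/p\left( .\right) }\right) \left( \left\vert x-x_{0}\right\vert ^{1-n}\vartheta ^{-1/p\left( .\right) }\right) $ and apply H\"{o}lder's inequality for variable exponent Lebesgue spaces, with the conjugate exponents $p\left( .\right) ,q\left( .\right) $ and constant $c_{h}$:
\begin{equation*}
\omega _{n-1}\leq c_{h}\left\Vert \left\vert \nabla f\right\vert \right\Vert _{L_{\vartheta }^{p\left( .\right) }\left( A\right) }\left\Vert \left\vert x-x_{0}\right\vert ^{1-n}\vartheta ^{-1/p\left( .\right) }\right\Vert _{L^{q\left( .\right) }\left( A\right) } ,
\end{equation*}
the first factor being the weighted norm because the $L^{p\left( .\right) }$-modular of $\left\vert \nabla f\right\vert \vartheta ^{1/p\left( .\right) }$ on $A$ equals $\int_{A}\left\vert \nabla f\left( x\right) \right\vert ^{p\left( x\right) }\vartheta \left( x\right) dx$. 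For the first factor, exactly as in the proof of Theorem \ref{eskap2} the hypothesis $\text{cap}_{p\left( .\right) ,\vartheta }\left( A\left( x_{0};r_{1},r_{2}\right) ,B\left( x_{0},r_{2}\right) \right) \geq 1$ makes the pertinent modular at least $1$, whence $\left\Vert \left\vert \nabla f\right\vert \right\Vert _{L_{\vartheta }^{p\left( .\right) }\left( A\right) }\leq \left\Vert \left\vert \nabla f\right\vert \right\Vert _{L_{\vartheta }^{p\left( .\right) }\left( B\left( x_{0},r_{2}\right) \right) }\leq \int_{B\left( x_{0},r_{2}\right) }\left\vert \nabla f\left( x\right) \right\vert ^{p\left( x\right) }\vartheta \left( x\right) dx$. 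For the second factor, since $\vartheta \geq 1$ and $\frac{q\left( x\right) }{p\left( x\right) }=q\left( x\right) -1$ we have $\vartheta ^{-q\left( x\right) /p\left( x\right) }=\vartheta ^{1-q\left( x\right) }\leq 1$, so its $L^{q\left( .\right) }$-modular on $A$ does not exceed $\int_{A}\left\vert x-x_{0}\right\vert ^{\left( 1-n\right) q\left( x\right) }dx$; bounding $\left\vert x-x_{0}\right\vert ^{\left( 1-n\right) q\left( x\right) }$ on the annulus by the relevant power of the radii and using $\left\vert A\right\vert \leq \left\vert A\left( x_{0};r_{1},r_{2}\right) \right\vert $ in the limit, this modular is at most the quantity $M$ sitting inside the constant $C$ of the statement, so by the modular--norm comparison the second factor is at most $\max \left\{ M^{1/q^{+}},M^{1/q^{-}}\right\} $. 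Combining the three estimates, letting $\varepsilon \rightarrow 0$ and then $\rho \uparrow r_{1}$, gives $\omega _{n-1}\leq c_{h}\max \left\{ M^{1/q^{+}},M^{1/q^{-}}\right\} \text{cap}_{p\left( .\right) ,\vartheta }\left( B\left( x_{0},r_{1}\right) ,B\left( x_{0},r_{2}\right) \right) =C\,\text{cap}_{p\left( .\right) ,\vartheta }\left( B\left( x_{0},r_{1}\right) ,B\left( x_{0},r_{2}\right) \right) $.

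The polar change of variables and the modular--norm comparisons are routine. What needs care is the weighted H\"{o}lder bookkeeping: the weight must be split as $\vartheta =\vartheta ^{1/p\left( .\right) }\vartheta ^{-1/p\left( .\right) }$ so that one H\"{o}lder factor becomes precisely the capacity integrand while the leftover factor $\vartheta ^{-1/p\left( .\right) }\leq 1$ is harmless --- this is exactly where the assumption $\vartheta \geq 1$ is used --- and the $L^{q\left( .\right) }$-norm of $\left\vert x-x_{0}\right\vert ^{1-n}$ has to be estimated on the annulus alone so that $\left\vert A\left( x_{0};r_{1},r_{2}\right) \right\vert $ enters $C$. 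One also has to invoke the standing hypothesis on the ring, as in Theorem \ref{eskap2}, to replace the $L_{\vartheta }^{p\left( .\right) }$-norm of $\left\vert \nabla f\right\vert $ by its modular, and carry out the compact exhaustion of $B\left( x_{0},r_{1}\right) $ so that the constant emerges exactly as stated.
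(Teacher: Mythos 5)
Your proof is correct and follows essentially the same route as the paper's. The only cosmetic differences are that you obtain the intermediate bound $\omega_{n-1}\le\int_{A}\left\vert\nabla f\right\vert\left\vert x-x_{0}\right\vert^{1-n}dx$ directly by integrating the fundamental theorem of calculus along rays and passing to polar coordinates, whereas the paper cites the Riesz-potential representation of Gilbarg--Trudinger Lemma 7.14 evaluated at $y=x_{0}$ (the same estimate, since that lemma is proved by precisely the radial argument you use), and that you make the exhaustion of $B\left(x_{0},r_{1}\right)$ by the compacta $\overline{B\left(x_{0},\rho\right)}$ explicit before taking the infimum, where the paper compresses this into the final phrase ``by the continuity of the integral.'' The weight splitting $\vartheta=\vartheta^{1/p\left(.\right)}\vartheta^{-1/p\left(.\right)}$, the use of the ring-capacity hypothesis to pass from norm to modular, the bound $\vartheta^{-q\left(.\right)/p\left(.\right)}\le 1$, and the resulting constant are identical to the paper's.
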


\begin{proof}
Let $f\in C_{0}^{\infty }\left( B\left( x_{0},r_{2}\right) \right) $ be a
function such that $f=1$ on $B\left( x_{0},r_{1}\right) .$ Then $f\in
R_{p\left( .\right) ,\vartheta }^{\ast }\left( B\left( x_{0},r_{1}\right)
,B\left( x_{0},r_{2}\right) \right) .$ By [\cite{Gil}, Lemma 7.14], we get%
\begin{equation*}
f\left( y\right) =\frac{1}{n\omega _{n}}\dint\limits_{%
%TCIMACRO{\U{211d} }%
%BeginExpansion
\mathbb{R}
%EndExpansion
^{n}}\frac{\bigtriangledown f\left( x\right) \left( y-x\right) }{\left\vert
x-y\right\vert ^{n}}dx.
\end{equation*}%
Also, it is well known that $\left( n-1\right) -$ dimensional measure of the
unit sphere $\omega _{n-1}$ in $%
%TCIMACRO{\U{211d} }%
%BeginExpansion
\mathbb{R}
%EndExpansion
^{n}$ equals $n\omega _{n}.$ Hence the following integral is obtained%
\begin{equation*}
f\left( y\right) =\frac{1}{\omega _{n-1}}\dint\limits_{%
%TCIMACRO{\U{211d} }%
%BeginExpansion
\mathbb{R}
%EndExpansion
^{n}}\frac{\bigtriangledown f\left( x\right) \left( y-x\right) }{\left\vert
x-y\right\vert ^{n}}dx
\end{equation*}%
for all $y\in 
%TCIMACRO{\U{211d} }%
%BeginExpansion
\mathbb{R}
%EndExpansion
^{n}$. Since cap$_{p\left( .\right) ,\vartheta }\left( A\left(
x_{0};r_{1},r_{2}\right) ,B\left( x_{0},r_{2}\right) \right) \geq 1,$ it is
easy to see that $\rho _{L_{\vartheta }^{p\left( .\right) }\left( B\left(
x_{0},r_{2}\right) \right) }\left( \left\vert \bigtriangledown f\right\vert
\right) \geq 1$ and then we have $\left\Vert \bigtriangledown f\right\Vert
_{L_{\vartheta }^{p\left( .\right) }\left( B\left( x_{0},r_{2}\right)
\right) }<\rho _{L_{\vartheta }^{p\left( .\right) }\left( B\left(
x_{0},r_{2}\right) \right) }\left( \left\vert \bigtriangledown f\right\vert
\right) ,$ see \cite{Liu}. Also, if we use the H\"{o}lder inequality for
variable exponent Lebesgue spaces, then we find%
\begin{eqnarray*}
\omega _{n-1} &=&\omega _{n-1}f\left( x_{0}\right) \\
&=&\dint\limits_{A\left( x_{0};r_{1},r_{2}\right) }\frac{\bigtriangledown
f\left( x\right) \left( x_{0}-x\right) }{\left\vert x-x_{0}\right\vert ^{n}}%
\vartheta \left( x\right) ^{\frac{1}{p\left( x\right) }}\vartheta \left(
x\right) ^{-\frac{1}{p\left( x\right) }}dx \\
&\leq &c_{h}\left\Vert \left\vert x-x_{0}\right\vert ^{1-n}\vartheta \left(
x\right) ^{-\frac{1}{p\left( .\right) }}\right\Vert _{L^{q\left( .\right)
}\left( A\left( x_{0};r_{1},r_{2}\right) \right) }\rho _{L_{\vartheta
}^{p\left( .\right) }\left( B\left( x_{0},r_{2}\right) \right) }\left(
\left\vert \bigtriangledown f\right\vert \right)
\end{eqnarray*}%
for some $c_{h}>0$ where $\frac{1}{p\left( .\right) }+\frac{1}{q\left(
.\right) }=1.$ Using the relationship between Luxemburg norm and modular
(see \cite{K}), we get%
\begin{eqnarray*}
&&\omega _{n-1} \\
&\leq &c_{h}\max \left\{ \left( \rho _{L^{q\left( .\right) }\left( A\left(
x_{0};r_{1},r_{2}\right) \right) }\left( \left\vert x-x_{0}\right\vert
^{1-n}\vartheta \left( x\right) ^{-\frac{1}{p\left( .\right) }}\right)
\right) ^{\frac{1}{q^{+}}},\right. \\
&&\left. \left( \rho _{L^{q\left( .\right) }\left( A\left(
x_{0};r_{1},r_{2}\right) \right) }\left( \left\vert x-x_{0}\right\vert
^{1-n}\vartheta \left( x\right) ^{-\frac{1}{p\left( .\right) }}\right)
\right) ^{\frac{1}{q^{-}}}\right\} \rho _{L_{\vartheta }^{p\left( .\right)
}\left( B\left( x_{0},r_{2}\right) \right) }\left( \left\vert
\bigtriangledown f\right\vert \right) \\
&\leq &c_{h}\max \left\{ \left[ \max \left\{ r_{2}^{\left( 1-n\right)
q^{+}},r_{2}^{\left( 1-n\right) q^{-}}\right\} \left\vert A\left(
x_{0};r_{1},r_{2}\right) \right\vert \right] ^{\frac{1}{q^{+}}},\right. \\
&&\left. \left[ \max \left\{ r_{2}^{\left( 1-n\right) q^{+}},r_{2}^{\left(
1-n\right) q^{-}}\right\} \left\vert A\left( x_{0};r_{1},r_{2}\right)
\right\vert \right] ^{\frac{1}{q^{-}}}\right\} \rho _{L_{\vartheta
}^{p\left( .\right) }\left( B\left( x_{0},r_{2}\right) \right) }\left(
\left\vert \bigtriangledown f\right\vert \right)
\end{eqnarray*}%
for some $c_{h}>0.$ Taking the infimum over $f\in R_{p\left( .\right)
,\vartheta }^{\ast }\left( B\left( x_{0},r_{1}\right) ,B\left(
x_{0},r_{2}\right) \right) $ from the last inequality, we have the desired
result by the continuity of the integral.
\end{proof}

\section{The Relationship Between Capacities}

Now, we will give several inequalities between the capacities previously
mentioned.

\begin{theorem}
\label{kar}If $\Omega \subset 
%TCIMACRO{\U{211d} }%
%BeginExpansion
\mathbb{R}
%EndExpansion
^{n}$ is bounded and $K\subset \Omega $ is compact, then%
\begin{equation*}
C_{p\left( .\right) ,\vartheta }\left( K\right) \leq C\max \left\{ \text{cap}%
_{p\left( .\right) ,\vartheta }\left( K,\Omega \right) ^{\frac{1}{p^{+}}},%
\text{cap}_{p\left( .\right) ,\vartheta }\left( K,\Omega \right) \right\}
\end{equation*}%
where the constant $C$ depends on the dimension $n,$ the Poincar\'{e}
inequality constant and diam$\left( \Omega \right) .$
\end{theorem}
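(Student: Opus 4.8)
The plan is to compare the Sobolev $(p(.),\vartheta)$-capacity of $K$ with its relative $(p(.),\vartheta)$-capacity by upgrading an admissible function for the relative capacity into an admissible function for the Sobolev capacity. By Theorem \ref{atif2} we may compute cap$_{p(.),\vartheta}(K,\Omega)$ over the class $R_{p(.),\vartheta}^{\ast}(K,\Omega)$, so fix $f\in R_{p(.),\vartheta}^{\ast}(K,\Omega)$ with $0\le f\le 1$ and $f\ge 1$ on $K$. Since $f\in W_{\vartheta}^{1,p(.)}(\Omega)\cap C_{0}(\Omega)$ has compact support in $\Omega$, its zero extension (still denoted $f$) lies in $W_{\vartheta}^{1,p(.)}(\mathbb{R}^{n})$, and the extended $f$ still satisfies $f\ge 1$ on the open set $\{f>1/2\}\cap\Omega'\supset K$ for a suitable neighborhood — more carefully, one takes $2f$ or uses that $f\ge 1$ on a neighborhood of $K$ after a standard truncation; in any case $f$ (or a harmless modification) belongs to $S_{p(.),\vartheta}(K)$. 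Hence
\begin{equation*}
C_{p(.),\vartheta}(K)\le \rho_{1,p(.),\vartheta}(f)=\rho_{p(.),\vartheta}(f)+\rho_{p(.),\vartheta}(|\nabla f|).
\end{equation*}

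**Controlling the zero-order term.** The key point is to bound $\rho_{p(.),\vartheta}(f)=\int_{\Omega}|f|^{p(x)}\vartheta\,dx$ by the gradient term $\rho_{p(.),\vartheta}(|\nabla f|)$. Since $f\in C_{0}(\Omega)$ with $\Omega$ bounded, $f$ has zero boundary values, so the Poincaré inequality in $L_{\vartheta}^{1}(\Omega)$ applies. However, to pass from an $L^1$-Poincaré estimate to a modular estimate one must be careful, because $t\mapsto t^{p(x)}$ is not linear. The cleanest route: since $0\le f\le 1$ we have $|f|^{p(x)}\le |f|^{p^{-}}\le \max\{1,|f|\}^{p^{-}}$, but this still is not directly $|f|$; instead use $|f|^{p(x)}\le |f|$ when $0\le f\le 1$ and $p(x)\ge 1$, giving immediately
\begin{equation*}
\rho_{p(.),\vartheta}(f)=\int_{\Omega}|f(x)|^{p(x)}\vartheta(x)\,dx\le \int_{\Omega}|f(x)|\vartheta(x)\,dx\le c\,(\mathrm{diam}\,\Omega)\int_{\Omega}|\nabla f(x)|\vartheta(x)\,dx.
\end{equation*}
Now the remaining task is to dominate $\int_{\Omega}|\nabla f|\vartheta\,dx$ by a power of $\rho_{p(.),\vartheta}(|\nabla f|)$. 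Using the embedding $L_{\vartheta}^{p(.)}(\Omega)\hookrightarrow L_{\vartheta}^{1}(\Omega)$ from Remark \ref{Liuembed}, we get $\int_{\Omega}|\nabla f|\vartheta\,dx\le c_{1}\|\nabla f\|_{L_{\vartheta}^{p(.)}(\Omega)}$, and then by the norm–modular relation (see \cite{K}, \cite{Liu}) $\|\nabla f\|_{L_{\vartheta}^{p(.)}(\Omega)}\le \max\{\rho_{p(.),\vartheta}(|\nabla f|)^{1/p^{-}},\rho_{p(.),\vartheta}(|\nabla f|)^{1/p^{+}}\}$.

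**Assembling and optimizing.** Combining the two displays, $\rho_{1,p(.),\vartheta}(f)\le \rho_{p(.),\vartheta}(|\nabla f|)+c\,c_{1}(\mathrm{diam}\,\Omega)\max\{\rho_{p(.),\vartheta}(|\nabla f|)^{1/p^{-}},\rho_{p(.),\vartheta}(|\nabla f|)^{1/p^{+}}\}$. Writing $t=\rho_{p(.),\vartheta}(|\nabla f|)\ge 0$, the right-hand side is $\le C\max\{t,t^{1/p^{+}}\}$ with $C$ depending on $n$, the Poincaré constant, the embedding constant $c_1$, and $\mathrm{diam}(\Omega)$ — one checks that $t^{1/p^{-}}\le \max\{t,t^{1/p^{+}}\}$ for all $t\ge 0$ since either $t\le 1$ (then $t^{1/p^{-}}\le t^{1/p^{+}}$) or $t\ge 1$ (then $t^{1/p^{-}}\le t$). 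Taking the infimum over $f\in R_{p(.),\vartheta}^{\ast}(K,\Omega)$ and invoking Theorem \ref{atif2} to identify that infimum with cap$_{p(.),\vartheta}(K,\Omega)$, and using that $t\mapsto\max\{t,t^{1/p^{+}}\}$ is nondecreasing, yields
\begin{equation*}
C_{p(.),\vartheta}(K)\le C\max\left\{\text{cap}_{p(.),\vartheta}(K,\Omega)^{\frac{1}{p^{+}}},\text{cap}_{p(.),\vartheta}(K,\Omega)\right\}.
\end{equation*}
The main obstacle I expect is the first step: verifying rigorously that a (truncated, zero-extended) member of $R_{p(.),\vartheta}^{\ast}(K,\Omega)$ genuinely lies in $S_{p(.),\vartheta}(K)$, i.e. that it is $\ge 1$ on an \emph{open} set containing $K$ rather than merely on $K$ — this is handled by the standard device of testing the relative capacity with functions that equal $1$ on a neighborhood of $K$ (legitimate since cap$^{\ast}$ is unchanged), or by replacing $f$ with $\min\{1,(1+\varepsilon)f\}$ and letting $\varepsilon\to 0^{+}$ using $p^{+}<\infty$ to control the modular.
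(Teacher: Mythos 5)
Your proof is correct and follows essentially the same route as the paper: zero-extend an admissible function for the relative capacity, truncate to get $0\le f\le 1$ so that $|f|^{p(x)}\le|f|$, apply the $L_{\vartheta}^{1}$-Poincar\'e inequality together with the embedding $L_{\vartheta}^{p(.)}(\Omega)\hookrightarrow L_{\vartheta}^{1}(\Omega)$ from Remark~\ref{Liuembed}, and then invoke the norm--modular inequality and the elementary bound $t^{1/p^{-}}\le\max\{t,t^{1/p^{+}}\}$. The only packaging difference is that the paper fixes a near-minimizing $f$ satisfying $\rho_{p(.),\vartheta}(|\nabla f|)\le\operatorname{cap}_{p(.),\vartheta}(K,\Omega)+\varepsilon$, sets $g=\min\{1,f\}$, and sends $\varepsilon\to0$ at the end, whereas you work with $f$ already satisfying $0\le f\le1$ and take an infimum directly; these are equivalent. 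You also correctly flag the one genuinely delicate point --- that the zero-extended, truncated function must equal $1$ on an \emph{open} neighborhood of $K$ to belong to $S_{p(.),\vartheta}(K)$ --- which the paper treats somewhat casually; your suggested fix (test with functions $>1$ on $K$, i.e.\ use the class $R_{p(.),\vartheta}(K,\Omega)$ via Theorems~\ref{atif1}--\ref{atif2}, or the $\min\{1,(1+\varepsilon)f\}$ device) is exactly what makes the step rigorous.
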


\begin{proof}
We can assume that cap$_{p\left( .\right) ,\vartheta }\left( K,\Omega
\right) <\infty $. Otherwise the proof is clear. Let $0<\varepsilon <1$ and $%
f\in R_{p\left( .\right) ,\vartheta }^{\ast }\left( K,\Omega \right) $ be a
function such that%
\begin{equation}
\rho _{p\left( .\right) ,\vartheta }\left( \left\vert \bigtriangledown
f\right\vert \right) \leq \text{cap}_{p\left( .\right) ,\vartheta }\left(
K,\Omega \right) +\varepsilon .  \label{13}
\end{equation}%
Now, let us extend $f$ by zero outside of $\Omega ,$ that is%
\begin{equation*}
f\left( x\right) =\left\{ 
\begin{array}{c}
f\left( x\right) ,\text{ \ \ }x\in \Omega \\ 
0,\text{ \ }x\in 
%TCIMACRO{\U{211d} }%
%BeginExpansion
\mathbb{R}
%EndExpansion
^{n}-\Omega%
\end{array}%
\right. ,
\end{equation*}%
and define $g=\min \left\{ 1,f\right\} .$ If we consider definitions of the
relative $\left( p\left( .\right) ,\vartheta \right) -$ capacity and the
Sobolev capacity, then we get $g\in S_{p\left( .\right) ,\vartheta }\left(
K\right) .$ Hence%
\begin{equation*}
C_{p\left( .\right) ,\vartheta }\left( K\right) \leq \dint\limits_{%
%TCIMACRO{\U{211d} }%
%BeginExpansion
\mathbb{R}
%EndExpansion
^{n}}\left( \left\vert g\left( x\right) \right\vert ^{p\left( x\right)
}+\left\vert \bigtriangledown f\left( x\right) \right\vert ^{p\left(
x\right) }\right) \vartheta \left( x\right) dx.
\end{equation*}%
It follows by $0\leq g\leq 1$ that%
\begin{equation}
\dint\limits_{\Omega }\left\vert g\left( x\right) \right\vert ^{p\left(
x\right) }\vartheta \left( x\right) dx\leq \dint\limits_{\Omega }\left\vert
g\left( x\right) \right\vert \vartheta \left( x\right) dx\leq
\dint\limits_{\Omega }\left\vert f\left( x\right) \right\vert \vartheta
\left( x\right) dx.  \label{9}
\end{equation}%
Also, if we use the Poincar\'{e} inequality in $L_{\vartheta }^{1}\left(
\Omega \right) $ and Remark \ref{Liuembed}, then we have%
\begin{equation}
\left\Vert f\vartheta \right\Vert _{1}=\left\Vert f\right\Vert _{1,\vartheta
}\leq c\text{diam}\left( \Omega \right) \left\Vert \bigtriangledown
f\right\Vert _{1,\vartheta }\leq c\text{diam}\left( \Omega \right)
c_{1}\left\Vert \bigtriangledown f\right\Vert _{p\left( .\right) ,\vartheta
}.  \label{10}
\end{equation}%
By (\ref{9}) and (\ref{10}), we have%
\begin{eqnarray*}
C_{p\left( .\right) ,\vartheta }\left( K\right) &\leq &\dint\limits_{\Omega
}\left\vert f\left( x\right) \right\vert \vartheta \left( x\right)
dx+\dint\limits_{\Omega }\left\vert \bigtriangledown f\left( x\right)
\right\vert ^{p\left( x\right) }\vartheta \left( x\right) dx \\
&\leq &C^{\ast }\left[ \left\Vert \bigtriangledown f\right\Vert _{p\left(
.\right) ,\vartheta }+\rho _{p\left( .\right) ,\vartheta }\left( \left\vert
\bigtriangledown f\right\vert \right) \right] \\
&\leq &C^{\ast }\left( \max \left\{ \rho _{p\left( .\right) ,\vartheta
}\left( \left\vert \bigtriangledown f\right\vert \right) ^{\frac{1}{p^{+}}%
},\rho _{p\left( .\right) ,\vartheta }\left( \left\vert \bigtriangledown
f\right\vert \right) ^{\frac{1}{p^{-}}}\right\} +\rho _{p\left( .\right)
,\vartheta }\left( \left\vert \bigtriangledown f\right\vert \right) \right)
\end{eqnarray*}%
where $C^{\ast }=\max \left\{ 1,c\text{diam}\left( \Omega \right)
c_{1}\right\} .$ Considering the fact that $1<p^{-}\leq p\left( .\right)
\leq p^{+}<\infty $ and (\ref{13})$,$ it is to see that%
\begin{eqnarray*}
&&\max \left\{ \rho _{p\left( .\right) ,\vartheta }\left( \left\vert
\bigtriangledown f\right\vert \right) ^{\frac{1}{p^{+}}},\rho _{p\left(
.\right) ,\vartheta }\left( \left\vert \bigtriangledown f\right\vert \right)
^{\frac{1}{p^{-}}}\right\} +\rho _{p\left( .\right) ,\vartheta }\left(
\left\vert \bigtriangledown f\right\vert \right) \\
&\leq &2\max \left\{ \rho _{p\left( .\right) ,\vartheta }\left( \left\vert
\bigtriangledown f\right\vert \right) ^{\frac{1}{p^{+}}},\rho _{p\left(
.\right) ,\vartheta }\left( \left\vert \bigtriangledown f\right\vert \right)
\right\} \\
&\leq &2\max \left\{ \left( \text{cap}_{p\left( .\right) ,\vartheta }\left(
K,\Omega \right) +\varepsilon \right) ^{\frac{1}{p^{+}}},\text{cap}_{p\left(
.\right) ,\vartheta }\left( K,\Omega \right) +\varepsilon \right\} \\
&\leq &2\max \left\{ \left( \text{cap}_{p\left( .\right) ,\vartheta }\left(
K,\Omega \right) \right) ^{\frac{1}{p^{+}}},\text{cap}_{p\left( .\right)
,\vartheta }\left( K,\Omega \right) \right\} +\varepsilon ^{\frac{1}{p^{+}}%
}+\varepsilon .
\end{eqnarray*}%
Hence, we get%
\begin{equation*}
C_{p\left( .\right) ,\vartheta }\left( K\right) \leq C\left[ \max \left\{
\left( \text{cap}_{p\left( .\right) ,\vartheta }\left( K,\Omega \right)
\right) ^{\frac{1}{p^{+}}},\text{cap}_{p\left( .\right) ,\vartheta }\left(
K,\Omega \right) \right\} +\varepsilon ^{\frac{1}{p^{+}}}+\varepsilon \right]
\end{equation*}%
where $C=2\max \left\{ 1,c\text{diam}\left( \Omega \right) c_{1}\right\} .$
This yields the claim as $\varepsilon $ tends to zero.
\end{proof}

The proof of the following theorem is similar to [\cite{Dien4}, Theorem
10.3.2].

\begin{theorem}
\label{Kap}If $\Omega \subset 
%TCIMACRO{\U{211d} }%
%BeginExpansion
\mathbb{R}
%EndExpansion
^{n}$ is bounded and $A\subset \Omega $, then%
\begin{equation*}
C_{p\left( .\right) ,\vartheta }\left( A\right) \leq C\max \left\{ \text{cap}%
_{p\left( .\right) ,\vartheta }\left( A,\Omega \right) ^{\frac{1}{p^{+}}},%
\text{cap}_{p\left( .\right) ,\vartheta }\left( A,\Omega \right) \right\}
\end{equation*}%
where the constant $C$ depends on the dimension $n,$ the Poincar\'{e}
inequality constant and diam$\left( \Omega \right) .$
\end{theorem}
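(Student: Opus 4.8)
The plan is to derive Theorem \ref{Kap} from its compact case, Theorem \ref{kar}, by the two standard regularisation steps, following [\cite{Dien4}, Theorem 10.3.2]. First, a \emph{reduction to open sets}: by the definition of $S_{p\left( .\right) ,\vartheta }$ a function is admissible for a set exactly when it is $\geq 1$ on \emph{some} open superset of it, so the Sobolev $\left( p\left( .\right) ,\vartheta \right) $-capacity is outer regular,
\begin{equation*}
C_{p\left( .\right) ,\vartheta }\left( A\right) =\inf \left\{ C_{p\left( .\right) ,\vartheta }\left( U\right) :A\subset U\subset \Omega ,\ U\text{ open}\right\} ,
\end{equation*}
since intersecting an open neighbourhood of $A$ with $\Omega $ only lowers $C_{p\left( .\right) ,\vartheta }$. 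Hence it is enough to prove the asserted inequality when $A=U$ is open. Granting that, one lets $U$ decrease to $A$: the function $t\mapsto \max \{t^{1/p^{+}},t\}$ is continuous and nondecreasing on $[0,\infty )$, and by property P3 the infimum of $\text{cap}_{p\left( .\right) ,\vartheta }\left( U,\Omega \right) $ over open $U$ with $A\subset U\subset \Omega $ equals $\text{cap}_{p\left( .\right) ,\vartheta }\left( A,\Omega \right) $, so the infimum passes through the right-hand side and the bound for $A$ follows.

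For the \emph{open case}, fix open $U$ with $A\subset U\subset \Omega $ (we may assume $\text{cap}_{p\left( .\right) ,\vartheta }\left( U,\Omega \right) <\infty $) and exhaust it by compact sets $K_{j}\nearrow U$. Theorem \ref{kar} applied to $K_{j}\subset \Omega $, together with monotonicity of the relative capacity (property P2), gives
\begin{eqnarray*}
C_{p\left( .\right) ,\vartheta }\left( K_{j}\right) &\leq &C\max \left\{ \text{cap}_{p\left( .\right) ,\vartheta }\left( K_{j},\Omega \right) ^{\frac{1}{p^{+}}},\text{cap}_{p\left( .\right) ,\vartheta }\left( K_{j},\Omega \right) \right\} \\
&\leq &C\max \left\{ \text{cap}_{p\left( .\right) ,\vartheta }\left( U,\Omega \right) ^{\frac{1}{p^{+}}},\text{cap}_{p\left( .\right) ,\vartheta }\left( U,\Omega \right) \right\}
\end{eqnarray*}
for every $j$, with $C$ the constant of Theorem \ref{kar} (depending only on $n$, the Poincar\'{e} constant and $\text{diam}\left( \Omega \right) $). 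Letting $j\to \infty $ and using $C_{p\left( .\right) ,\vartheta }\left( U\right) =\lim_{j\to \infty }C_{p\left( .\right) ,\vartheta }\left( K_{j}\right) $ completes the open case, hence the theorem.

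The one step beyond the elementary outer-measure properties, and the main obstacle, is this last identity --- continuity from below of $C_{p\left( .\right) ,\vartheta }$ along the exhaustion $K_{j}\nearrow U$. I would establish it by the standard compactness scheme: choose near-optimal $f_{j}\in S_{p\left( .\right) ,\vartheta }\left( K_{j}\right) $ with $0\leq f_{j}\leq 1$; since $C_{p\left( .\right) ,\vartheta }\left( K_{j}\right) $ stays bounded, the $f_{j}$ are bounded in the reflexive space $W_{\vartheta }^{1,p\left( .\right) }\left( \mathbb{R}^{n}\right) $, so a subsequence converges weakly, and passing to convex combinations (Mazur's lemma) one gets a.e. convergence to some $f$ with $f=1$ a.e. on $U=\bigcup_{j}K_{j}$; weak (Fatou-type) lower semicontinuity of $\rho _{1,p\left( .\right) ,\vartheta }$ then shows that $f$ is admissible for $U$ and $\rho _{1,p\left( .\right) ,\vartheta }\left( f\right) \leq \lim_{j}C_{p\left( .\right) ,\vartheta }\left( K_{j}\right) $, whence $C_{p\left( .\right) ,\vartheta }\left( U\right) =\lim_{j}C_{p\left( .\right) ,\vartheta }\left( K_{j}\right) $ (the reverse inequality being monotonicity). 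This is exactly the Choquet-capacity part of the theory of $C_{p\left( .\right) ,\vartheta }$ in \cite{A2} (parallel to Corollary \ref{Chouquet} and property P6 for the relative capacity), and may be quoted rather than reproved.
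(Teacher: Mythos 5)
Your proof is correct and follows the same regularization route that the paper delegates to [\cite{Dien4}, Theorem 10.3.2]: from compacts (Theorem \ref{kar}) to open sets via a compact exhaustion and Choquet-type continuity from below of $C_{p(.),\vartheta}$, and then from open to arbitrary sets via outer regularity of $C_{p(.),\vartheta}$ together with property P3. You correctly isolate continuity from below along increasing compacts as the one nonelementary ingredient, and your reflexivity/Mazur/Fatou sketch of it (the admissible class $\left\{ g\geq 1\text{ a.e.\ on }K_{m}\right\}$ being convex and strongly closed, hence weakly closed, plus lower semicontinuity of the modular) is sound and is precisely what the cited reference supplies.
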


\begin{corollary}
Let $A\subset \Omega .$ If cap$_{p\left( .\right) ,\vartheta }\left(
A,\Omega \right) =0,$ then $C_{p\left( .\right) ,\vartheta }\left( A\right)
=0.$
\end{corollary}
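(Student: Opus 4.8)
The plan is to deduce the corollary directly from Theorem~\ref{Kap}. Suppose $A\subset\Omega$ with $\Omega$ bounded and $\text{cap}_{p\left(.\right),\vartheta}\left(A,\Omega\right)=0$. Applying Theorem~\ref{Kap} to the pair $\left(A,\Omega\right)$ gives
\begin{equation*}
C_{p\left(.\right),\vartheta}\left(A\right)\leq C\max\left\{\text{cap}_{p\left(.\right),\vartheta}\left(A,\Omega\right)^{\frac{1}{p^{+}}},\text{cap}_{p\left(.\right),\vartheta}\left(A,\Omega\right)\right\}=C\max\left\{0,0\right\}=0,
\end{equation*}
where $C$ is the constant furnished by Theorem~\ref{Kap}, depending only on $n$, the Poincar\'{e} constant and $\text{diam}\left(\Omega\right)$. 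Since $C_{p\left(.\right),\vartheta}$ is nonnegative by definition, we conclude $C_{p\left(.\right),\vartheta}\left(A\right)=0$.

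The only point that needs a word of care is the implicit boundedness hypothesis: Theorem~\ref{Kap} requires $\Omega$ to be bounded, and the corollary inherits this through the phrase ``Let $A\subset\Omega$,'' where $\Omega$ is the ambient bounded set fixed in the statement of Theorem~\ref{Kap} and maintained throughout this section. So there is no genuine obstacle here; the corollary is a one-line consequence obtained by substituting the value $0$ into the right-hand side of the inequality in Theorem~\ref{Kap} and using the fact that $t\mapsto t^{1/p^{+}}$ vanishes at $t=0$. I would present it in exactly two sentences: invoke Theorem~\ref{Kap}, then observe both terms of the maximum are zero.

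If one wanted the sharper statement without the boundedness restriction, the plan would instead be to use the exhaustion-type Theorem preceding Remark~\ref{Liuembed} (the one with the $\left(\sum\text{cap}^{1/(1-p^{-})}\right)^{1-p^{-}}$ bound) together with monotonicity property (P2) to reduce to bounded pieces; but since the corollary as stated sits immediately after Theorem~\ref{Kap} and uses the same notational conventions, the direct deduction is what is intended, and that is the proof I would write.
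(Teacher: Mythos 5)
Your proof is correct and is precisely the deduction the paper intends: the corollary is placed immediately after Theorem~\ref{Kap} with no separate argument, because setting $\text{cap}_{p(.),\vartheta}(A,\Omega)=0$ in that inequality gives $C_{p(.),\vartheta}(A)\leq 0$, hence $=0$. Your remark about the implicit boundedness of $\Omega$ (inherited from the hypotheses of Theorem~\ref{Kap}) is also the right reading of the statement.
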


Note that the opposite implication of previous corollary does not always
true. We need to consider an additional hypothesis for this. By the same
arguments as in [\cite{Dien4}, Proposition 10.3.4], we obtain following
statement.

\begin{theorem}
Let $A\subset \Omega $. Assume that the space $W_{\vartheta }^{1,p\left(
.\right) }\left( 
%TCIMACRO{\U{211d} }%
%BeginExpansion
\mathbb{R}
%EndExpansion
^{n}\right) \cap C\left( 
%TCIMACRO{\U{211d} }%
%BeginExpansion
\mathbb{R}
%EndExpansion
^{n}\right) $ is dense in $W_{\vartheta }^{1,p\left( .\right) }\left( 
%TCIMACRO{\U{211d} }%
%BeginExpansion
\mathbb{R}
%EndExpansion
^{n}\right) .$ If $C_{p\left( .\right) ,\vartheta }\left( A\right) =0,$ then
cap$_{p\left( .\right) ,\vartheta }\left( A,\Omega \right) =0.$
\end{theorem}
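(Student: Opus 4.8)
The plan is to mimic the classical argument: from $C_{p(.),\vartheta}(A)=0$ we produce, for the pair $(A,\Omega)$, admissible test functions for the relative capacity whose gradient modular is arbitrarily small. First I would exploit the hypothesis that $W_{\vartheta}^{1,p(.)}(\mathbb{R}^n)\cap C(\mathbb{R}^n)$ is dense in $W_{\vartheta}^{1,p(.)}(\mathbb{R}^n)$: since $C_{p(.),\vartheta}(A)=0$, for each $j\in\mathbb{N}$ there is $g_j\in S_{p(.),\vartheta}(A)$ with $\rho_{1,p(.),\vartheta}(g_j)<2^{-j}$, and after replacing $g_j$ by $\min\{1,\max\{0,g_j\}\}$ and approximating in norm by a continuous Sobolev function we may assume $g_j$ is continuous, $0\le g_j\le 1$, $g_j=1$ on an open neighbourhood $U_j\supset A$, and still $\rho_{1,p(.),\vartheta}(g_j)$ is small (using $p^+<\infty$ to pass from norm-smallness to modular-smallness). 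Setting $h_k=\sup_{j\ge k}g_j$ or, more conveniently, $h_k=\min\{1,\sum_{j\ge k}g_j\}$ gives a continuous function equal to $1$ on the open set $\bigcup_{j\ge k}U_j\supset A$ with $\rho_{1,p(.),\vartheta}(h_k)\le\sum_{j\ge k}\rho_{1,p(.),\vartheta}(g_j)\to 0$.

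Next I would cut these functions off to make them compactly supported inside $\Omega$, which is where boundedness of $\Omega$ and the admissibility class $R_{p(.),\vartheta}^{\ast}(K,\Omega)$ enter. Fix a compact $K\subset A$ (by Corollary~\ref{Chouquet} it suffices to bound $\mathrm{cap}_{p(.),\vartheta}(K,\Omega)$ for all such $K$). Choose $\eta\in C_0^{\infty}(\Omega)$ with $0\le\eta\le 1$ and $\eta=1$ on a neighbourhood of $K$. Then $\eta h_k\in W_{\vartheta}^{1,p(.)}(\Omega)\cap C_0(\Omega)$ and $\eta h_k=1$ on $K$, so $\eta h_k\in R_{p(.),\vartheta}^{\ast}(K,\Omega)$. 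By the Leibniz rule, $\nabla(\eta h_k)=h_k\nabla\eta+\eta\nabla h_k$, and since $0\le h_k\le 1$, $|\nabla\eta|\le C_\eta$, $|\eta|\le 1$, the elementary inequality $(a+b)^{p(x)}\le 2^{p^+}(a^{p(x)}+b^{p(x)})$ gives
\begin{equation*}
\rho_{p(.),\vartheta}\big(|\nabla(\eta h_k)|\big)\le 2^{p^+}\Big(C_\eta^{p^+}\!\!\int_{\Omega\cap\{h_k\neq 0\}}\!\! h_k^{p(x)}\vartheta\,dx+\rho_{p(.),\vartheta}(|\nabla h_k|)\Big).
\end{equation*}
The second term is $\le\rho_{1,p(.),\vartheta}(h_k)\to 0$. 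For the first term, $h_k\to 0$ in $W_{\vartheta}^{1,p(.)}$ hence $\rho_{p(.),\vartheta}(h_k)=\int h_k^{p(x)}\vartheta\,dx\to 0$ as well (again using $p^+<\infty$), so the whole right-hand side tends to $0$.

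Therefore $\mathrm{cap}_{p(.),\vartheta}^{\ast}(K,\Omega)\le\rho_{p(.),\vartheta}(|\nabla(\eta h_k)|)\to 0$, which forces $\mathrm{cap}_{p(.),\vartheta}^{\ast}(K,\Omega)=0$ for every compact $K\subset A$. By Theorem~\ref{atif2} and Corollary~\ref{Chouquet} (capacitability of Borel, in particular open and arbitrary, sets, or directly the definition via $\sup$ over compacts after passing to open hulls), $\mathrm{cap}_{p(.),\vartheta}(A,\Omega)=\sup_{K\subset A\text{ compact}}\mathrm{cap}_{p(.),\vartheta}^{\ast}(K,\Omega)=0$, completing the proof. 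The main obstacle I anticipate is the bookkeeping in the continuity/reduction step: the functions in $S_{p(.),\vartheta}(A)$ need only be Sobolev, not continuous, so one must genuinely invoke the density hypothesis to replace $g_j$ by a \emph{continuous} near-optimal competitor while preserving both the constraint $g_j\ge 1$ near $A$ (which may require enlarging slightly and retruncating) and the smallness of the modular; the passage between norm and modular, licit because $1<p^-\le p^+<\infty$, has to be handled with the standard modular inequalities. Everything after that reduction is the routine Leibniz-rule cutoff estimate above.
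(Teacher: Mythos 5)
The core compact-set estimate is right: for a compact $K\subset A$, one picks a continuous Sobolev admissible function $g$ for $C_{p\left( .\right) ,\vartheta }\left( K\right) $ with small modular, multiplies by a cutoff $\eta \in C_{0}^{\infty }\left( \Omega \right) $ equal to $1$ near $K$, and the Leibniz estimate gives $\text{cap}_{p\left( .\right) ,\vartheta }^{\ast }\left( K,\Omega \right) \leq 2^{p^{+}}\left( C_{\eta }^{p^{+}}\rho _{p\left( .\right) ,\vartheta }\left( g\right) +\rho _{p\left( .\right) ,\vartheta }\left( \left\vert \bigtriangledown g\right\vert \right) \right) \rightarrow 0$. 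However, the inequality $\rho _{1,p\left( .\right) ,\vartheta }\left( h_{k}\right) \leq \sum_{j\geq k}\rho _{1,p\left( .\right) ,\vartheta }\left( g_{j}\right) $ for $h_{k}=\min \{1,\sum_{j\geq k}g_{j}\}$ is false as stated: the modular is convex rather than subadditive, so $\left( \sum a_{j}\right) ^{p\left( x\right) }\geq \sum a_{j}^{p\left( x\right) }$, and truncation only helps the zeroth-order term, not the gradient term. You would need either $\sup_{j\geq k}g_{j}$ (where $\left( \sup a_{j}\right) ^{p\left( x\right) }\leq \sum a_{j}^{p\left( x\right) }$ a.e., at the cost of a continuity question) or to argue at the level of norms and return to the modular via $p^{+}<\infty $. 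In fact the whole $h_{k}$ construction is unnecessary: since $K\subset A\subset U_{j}$ for every $j$, each individual $g_{j}$ is already $\geq 1$ on a neighbourhood of $K$, so $\eta g_{j}\in R_{p\left( .\right) ,\vartheta }^{\ast }\left( K,\Omega \right) $ and the estimate can be applied directly with $g_{j}$ in place of $h_{k}$.

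The more serious gap is in the final step. The identity $\text{cap}_{p\left( .\right) ,\vartheta }\left( A,\Omega \right) =\sup_{K\subset A\text{ compact}}\text{cap}_{p\left( .\right) ,\vartheta }^{\ast }\left( K,\Omega \right) $ is the capacitability assertion of Corollary~\ref{Chouquet}, which is stated only for Borel sets; for an arbitrary $A\subset \Omega $ the relative capacity is defined by an infimum over open supersets, and your estimate on compacts $K\subset A$ does not control $\text{cap}_{p\left( .\right) ,\vartheta }^{\ast }\left( K,\Omega \right) $ for compacts $K$ that merely lie in such a superset, because the cutoff constant $C_{\eta }$ blows up as $K$ approaches $\partial \Omega $. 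The standard repair is to pass first to a $G_{\delta }$ hull: the Sobolev capacity is outer (any $f\in S_{p\left( .\right) ,\vartheta }\left( A\right) $ is $\geq 1$ on an open $V\supset A$ and is therefore admissible for $V$), so one can pick open $V_{j}\supset A$ with $C_{p\left( .\right) ,\vartheta }\left( V_{j}\right) <2^{-j}$; then $G=\bigcap_{j}\left( V_{j}\cap \Omega \right) $ is a Borel subset of $\Omega $ containing $A$ with $C_{p\left( .\right) ,\vartheta }\left( G\right) =0$. Applying your compact-set argument to $G$ and then Corollary~\ref{Chouquet} to the Borel set $G$ gives $\text{cap}_{p\left( .\right) ,\vartheta }\left( G,\Omega \right) =0$, and monotonicity yields $\text{cap}_{p\left( .\right) ,\vartheta }\left( A,\Omega \right) \leq \text{cap}_{p\left( .\right) ,\vartheta }\left( G,\Omega \right) =0$. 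With this insertion the argument closes and matches the route the paper cites from Diening et al.
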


Now, we give a relationship between Sobolev $\left( p\left( .\right)
,\vartheta \right) $- capacity and relative $\left( p\left( .\right)
,\vartheta \right) $- capacity.

\begin{theorem}
If $A\subset B\left( x_{0},r\right) $ and cap$_{p\left( .\right) ,\vartheta
}\left( A,B\left( x_{0},2r\right) \right) \geq 1,$ then%
\begin{equation*}
\frac{1}{C_{1}}C_{p\left( .\right) ,\vartheta }\left( A\right) \leq \text{cap%
}_{p\left( .\right) ,\vartheta }\left( A,B\left( x_{0},2r\right) \right)
\leq C_{2}C_{p\left( .\right) ,\vartheta }\left( A\right)
\end{equation*}%
where $C_{1}=$ $1+cr\left( 1+\left\vert B\left( x_{0},2r\right) \right\vert
\right) $ and $C_{2}=2^{2p^{+}}\left( 1+\max \left\{
r^{-p^{-}},r^{-p^{+}}\right\} \right) $ and $c$ is the Poincar\'{e}
inequality constant.
\end{theorem}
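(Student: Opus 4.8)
The plan is to establish the two inequalities separately. For the lower bound $\mathrm{cap}_{p(.),\vartheta}(A,B(x_0,2r)) \leq C_2 C_{p(.),\vartheta}(A)$, I would start from an admissible function $f \in S_{p(.),\vartheta}(A)$ with $0 \leq f \leq 1$ and $f = 1$ in an open set $U \supset A$; such $f$ satisfies $\rho_{1,p(.),\vartheta}(f) \leq C_{p(.),\vartheta}(A) + \varepsilon$. To convert this into a competitor for the relative capacity, I would multiply by a smooth cut-off $g \in C_0^\infty(B(x_0,2r))$ with $0 \leq g \leq 1$, $g = 1$ on $B(x_0,r)$, and $|\nabla g| \leq 2/r$, exactly as in the proof of Theorem \ref{eskap1}. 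Then $gf \in W_{\vartheta}^{1,p(.)}(B(x_0,2r)) \cap C_0(B(x_0,2r))$ (using the density of $C_0^\infty$ in the local weighted Sobolev space) and $gf = 1$ on $A$, so $gf \in R_{p(.),\vartheta}^{\ast}(A,B(x_0,2r))$. Estimating $|\nabla(gf)|^{p(x)} \leq 2^{p^+}(|\nabla f|^{p(x)} + |\nabla g|^{p(x)}|f|^{p(x)}) \leq 2^{p^+}|\nabla f|^{p(x)} + 2^{2p^+}\max\{r^{-p^-},r^{-p^+}\}|f|^{p(x)}$, integrating against $\vartheta$, and using Theorem \ref{atif2} to identify $\mathrm{cap}^{\ast}$ with $\mathrm{cap}$, I obtain
\begin{equation*}
\mathrm{cap}_{p(.),\vartheta}(A,B(x_0,2r)) \leq 2^{p^+}\rho_{p(.),\vartheta}(|\nabla f|) + 2^{2p^+}\max\{r^{-p^-},r^{-p^+}\}\rho_{p(.),\vartheta}(f) \leq 2^{2p^+}(1 + \max\{r^{-p^-},r^{-p^+}\})\rho_{1,p(.),\vartheta}(f),
\end{equation*}
and letting $\varepsilon \to 0$ gives the stated constant $C_2$.

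For the upper bound $C_{p(.),\vartheta}(A) \leq C_1\, \mathrm{cap}_{p(.),\vartheta}(A,B(x_0,2r))$, I would follow the pattern of Theorem \ref{kar}: take $f \in R_{p(.),\vartheta}^{\ast}(A,B(x_0,2r))$ with $0 \leq f \leq 1$ nearly optimal, extend it by zero outside $B(x_0,2r)$, and set $g = \min\{1,f\}$, which lies in $S_{p(.),\vartheta}(A)$. Then
\begin{equation*}
C_{p(.),\vartheta}(A) \leq \rho_{1,p(.),\vartheta}(g) \leq \int_{B(x_0,2r)} |f(x)|\,\vartheta(x)\,dx + \rho_{p(.),\vartheta}(|\nabla f|),
\end{equation*}
using $|g|^{p(x)} \leq |g| \leq |f|$. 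Now the hypothesis $\mathrm{cap}_{p(.),\vartheta}(A,B(x_0,2r)) \geq 1$ forces $\rho_{L_\vartheta^{p(.)}(B(x_0,2r))}(|\nabla f|) \geq 1$, hence $\|\nabla f\|_{p(.),\vartheta} < \rho_{p(.),\vartheta}(|\nabla f|)$ on that ball (the argument from \cite{Liu} used repeatedly above). Applying the Poincar\'e inequality in $L_\vartheta^1(B(x_0,2r))$ together with the embedding $L_\vartheta^{p(.)}(B(x_0,2r)) \hookrightarrow L_\vartheta^1(B(x_0,2r))$ from Remark \ref{Liuembed} bounds $\int |f|\vartheta\,dx$ by $c r c_1 \|\nabla f\|_{p(.),\vartheta} \leq c r c_1 \rho_{p(.),\vartheta}(|\nabla f|)$; absorbing the Poincar\'e and embedding constants and taking the infimum over $f$ then yields $C_{p(.),\vartheta}(A) \leq (1 + cr(1+|B(x_0,2r)|))\,\mathrm{cap}_{p(.),\vartheta}(A,B(x_0,2r))$.

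The main subtlety — and the reason the hypothesis $\mathrm{cap}_{p(.),\vartheta}(A,B(x_0,2r)) \geq 1$ is needed — is the passage between the modular and the Luxemburg norm: in variable exponent spaces these are only comparable through the $\max\{(\cdot)^{1/p^-},(\cdot)^{1/p^+}\}$ unit-ball dichotomy, so one must be in the regime where the modular is at least $1$ to get the clean linear estimate $\|\cdot\| < \rho(\cdot)$. I would handle this by invoking the normalization lemma from \cite{Liu} as in Theorems \ref{eskap2} and \ref{eskap1}. A second point to watch is that the factor $|B(x_0,2r)|$ in $C_1$ comes from the embedding constant $c_1$ in Remark \ref{Liuembed}, which for the inclusion $L_\vartheta^{p(.)}(\Omega)\hookrightarrow L_\vartheta^{1}(\Omega)$ on a bounded set depends on $\mu_\vartheta(\Omega)$ (and hence is controlled by $1+|B(x_0,2r)|$ since $\vartheta$ need not be bounded); one should state this dependence carefully rather than absorb it into an anonymous constant. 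Everything else is bookkeeping with the inequalities $|\nabla(gf)|\leq g|\nabla f| + f|\nabla g|$ and $|g|^{p(x)}\leq|g|$.
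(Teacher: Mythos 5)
Your proposal follows essentially the same route as the paper's proof: for the first inequality you multiply a near-optimal $f\in S_{p(.),\vartheta}$ by a cut-off $g\in C_0^{\infty}(B(x_0,2r))$ and run the elementary estimate on $|\nabla(gf)|^{p(x)}$, and for the second you combine the Poincar\'e inequality in $L^1_\vartheta(B(x_0,2r))$ with the embedding of Remark \ref{Liuembed} and the modular/norm comparison from \cite{Liu} that the hypothesis $\mathrm{cap}_{p(.),\vartheta}(A,B(x_0,2r))\geq 1$ makes available. The only cosmetic departures are that the paper first restricts to compact $K$ and passes to arbitrary $A$ via Corollary \ref{Chouquet}, and in the second part the paper tests with smooth $f\in C_0^\infty(B(x_0,2r))$ equal to $1$ near $K$ rather than your $\min\{1,f\}$ device borrowed from Theorem \ref{kar}; your observation that the embedding constant $c_1$ needs to be tracked explicitly (the paper's proof produces $C_1=1+cr c_1$ whereas the statement displays $1+cr(1+|B(x_0,2r)|)$) is a fair criticism of the bookkeeping, though the claim that $c_1$ is dominated by $1+|B(x_0,2r)|$ would itself require an extra hypothesis on $\vartheta$.
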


\begin{proof}
Suppose that $K\subset B\left( x_{0},r\right) $ is compact. Let $g\in
C_{0}^{\infty }\left( B\left( x_{0},2r\right) \right) ,$ $0\leq g\leq 1$ is
a cut-off function such that $g=1$ in $B\left( x_{0},r\right) $ and $%
\left\vert \bigtriangledown g\right\vert \leq \frac{2}{r}.$ Also, the
function $f\in S_{p\left( .\right) ,\vartheta }\left( K\right) $ be given.
Thus we get $gf\in R_{p\left( .\right) ,\vartheta }^{\ast }\left( A,B\left(
x_{0},2r\right) \right) .$ Therefore%
\begin{eqnarray*}
&&\text{cap}_{p\left( .\right) ,\vartheta }\left( K,B\left( x_{0},2r\right)
\right) \\
&\leq &2^{p^{+}}\dint\limits_{B\left( x_{0},2r\right) }\left\vert
\bigtriangledown f\left( x\right) \right\vert ^{p\left( x\right) }\vartheta
\left( x\right) dx \\
&&+2^{2p^{+}}\max \left\{ r^{-p^{-}},r^{-p^{+}}\right\}
\dint\limits_{B\left( x_{0},2r\right) }\left\vert f\left( x\right)
\right\vert ^{p\left( x\right) }\vartheta \left( x\right) dx \\
&\leq &2^{2p^{+}}\left( 1+\max \left\{ r^{-p^{-}},r^{-p^{+}}\right\} \right)
\dint\limits_{%
%TCIMACRO{\U{211d} }%
%BeginExpansion
\mathbb{R}
%EndExpansion
^{n}}\left( \left\vert f\left( x\right) \right\vert ^{p\left( x\right)
}+\left\vert \bigtriangledown f\left( x\right) \right\vert ^{p\left(
x\right) }\right) \vartheta \left( x\right) dx.
\end{eqnarray*}%
If we take the infimum over $f\in S_{p\left( .\right) ,\vartheta }\left(
K\right) $ from the last inequality, then we have%
\begin{equation*}
\text{cap}_{p\left( .\right) ,\vartheta }\left( K,B\left( x_{0},2r\right)
\right) \leq C_{2}C_{p\left( .\right) ,\vartheta }\left( K\right)
\end{equation*}%
where $C_{2}=2^{2p^{+}}\left( 1+\max \left\{ r^{-p^{-}},r^{-p^{+}}\right\}
\right) .$

Now, we take $f\in C_{0}^{\infty }\left( B\left( x_{0},2r\right) \right) ,$ $%
0\leq f\leq 1$ such that $f=1$ in open set containing $K.$ Then $f\in
R_{p\left( .\right) ,\vartheta }^{\ast }\left( K,B\left( x_{0},2r\right)
\right) .$Since cap$_{p\left( .\right) ,\vartheta }\left( A,B\left(
x_{0},2r\right) \right) \geq 1,$ it is easy to see that $\rho _{L_{\vartheta
}^{p\left( .\right) }\left( B\left( x_{0},2r\right) \right) }\left(
\left\vert \bigtriangledown f\right\vert \right) \geq 1$ and then we have $%
\left\Vert \bigtriangledown f\right\Vert _{L_{\vartheta }^{p\left( .\right)
}\left( B\left( x_{0},2r\right) \right) }<\rho _{L_{\vartheta }^{p\left(
.\right) }\left( B\left( x_{0},2r\right) \right) }\left( \left\vert
\bigtriangledown f\right\vert \right) ,$ see \cite{Liu}. If we use the fact $%
0\leq f\leq 1,$ the Poincar\'{e} inequality in $L_{\vartheta }^{1}\left(
B\left( x_{0},2r\right) \right) $ and the embedding $L_{\vartheta }^{p\left(
.\right) }\left( B\left( x_{0},2r\right) \right) \hookrightarrow
L_{\vartheta }^{1}\left( B\left( x_{0},2r\right) \right) $ , then we obtain%
\begin{eqnarray*}
\dint\limits_{%
%TCIMACRO{\U{211d} }%
%BeginExpansion
\mathbb{R}
%EndExpansion
^{n}}\left\vert f\left( x\right) \right\vert ^{p\left( x\right) }\vartheta
\left( x\right) dx &\leq &cr\dint\limits_{B\left( x_{0},2r\right)
}\left\vert \bigtriangledown f\left( x\right) \right\vert \vartheta \left(
x\right) dx \\
&\leq &crc_{1}\dint\limits_{B\left( x_{0},2r\right) }\left\vert
\bigtriangledown f\left( x\right) \right\vert ^{p\left( x\right) }\vartheta
\left( x\right) dx.
\end{eqnarray*}%
It follows that%
\begin{equation*}
C_{p\left( .\right) ,\vartheta }\left( K\right) \leq
C_{1}\dint\limits_{B\left( x_{0},2r\right) }\left\vert \bigtriangledown
f\left( x\right) \right\vert ^{p\left( x\right) }\vartheta \left( x\right) dx
\end{equation*}%
where $C_{1}=$ $1+crc_{1}.$ This completes the proof for the compact sets if
we take the infimum over $f\in R_{p\left( .\right) ,\vartheta }^{\ast
}\left( K,B\left( x_{0},2r\right) \right) $ from the last inequality. If we
consider the definition of relative $\left( p\left( .\right) ,\vartheta
\right) $- capacity and use the first part of proof, then it is shown that
the desired result holds for arbitrary set $A\subset B\left( x_{0},r\right)
. $
\end{proof}

\section{$\left( p\left( .\right) ,\protect\vartheta \right) $- Thinness}

\begin{definition}
The set $A\subset 
%TCIMACRO{\U{211d} }%
%BeginExpansion
\mathbb{R}
%EndExpansion
^{n}$ is called $\left( p\left( .\right) ,\vartheta \right) $- thin at $%
x_{0} $ if%
\begin{equation}
\tint\limits_{0}^{1}\left( \frac{\text{cap}_{p\left( .\right) ,\vartheta
}\left( A\cap B\left( x_{0},r\right) ,B\left( x_{0},2r\right) \right) }{%
\text{cap}_{p\left( .\right) ,\vartheta }\left( B\left( x_{0},r\right)
,B\left( x_{0},2r\right) \right) }\right) ^{\frac{1}{p\left( x_{0}\right) -1}%
}\frac{dr}{r}<\infty .  \label{Wein}
\end{equation}%
We say that $A$ is $\left( p\left( .\right) ,\vartheta \right) $- thick at $%
x_{0}$ if $A$ is not $\left( p\left( .\right) ,\vartheta \right) $- thin at $%
x_{0}.$
\end{definition}

The integral in the inequality (\ref{Wein}) is called Wiener type integral,
see \cite{Hei}. From now on, we write that%
\begin{equation*}
W_{p\left( .\right) ,\vartheta }\left( A,x_{0}\right)
=\tint\limits_{0}^{1}\left( \frac{\text{cap}_{p\left( .\right) ,\vartheta
}\left( A\cap B\left( x_{0},r\right) ,B\left( x_{0},2r\right) \right) }{%
\text{cap}_{p\left( .\right) ,\vartheta }\left( B\left( x_{0},r\right)
,B\left( x_{0},2r\right) \right) }\right) ^{\frac{1}{p\left( x_{0}\right) -1}%
}\frac{dr}{r}
\end{equation*}%
for convenience. Also, we denote the Weiner sum as%
\begin{equation*}
W_{p\left( .\right) ,\vartheta }^{sum}\left( A,x_{0}\right)
=\dsum\limits_{i=0}^{\infty }\left( \frac{\text{cap}_{p\left( .\right)
,\vartheta }\left( A\cap B\left( x_{0},2^{-i}\right) ,B\left(
x_{0},2^{1-i}\right) \right) }{\text{cap}_{p\left( .\right) ,\vartheta
}\left( B\left( x_{0},2^{-i}\right) ,B\left( x_{0},2^{1-i}\right) \right) }%
\right) ^{^{\frac{1}{p\left( x_{0}\right) -1}}}.
\end{equation*}%
The Weiner sum is more useful than type integral one in most cases. Now we
give a relationship between these two notions.

\begin{theorem}
\label{denklikthin}Assume that the hypotheses of Theorem \ref{eskap2} and
Theorem \ref{eskap1} are hold. Then there exist constants $C_{1},C_{2}$ such
that%
\begin{equation*}
C_{1}W_{p\left( .\right) ,\vartheta }\left( A,x_{0}\right) \leq W_{p\left(
.\right) ,\vartheta }^{sum}\left( A,x_{0}\right) \leq C_{2}W_{p\left(
.\right) ,\vartheta }\left( A,x_{0}\right)
\end{equation*}%
for every $A\subset 
%TCIMACRO{\U{211d} }%
%BeginExpansion
\mathbb{R}
%EndExpansion
^{n}$ and $x_{0}\notin A.$ In particular, $W_{p\left( .\right) ,\vartheta
}\left( A,x_{0}\right) $ is finite if and only if $W_{p\left( .\right)
,\vartheta }^{sum}\left( A,x_{0}\right) $ is finite.
\end{theorem}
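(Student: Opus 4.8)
The plan is to compare the Wiener integral with the Wiener sum by splitting the integral over $(0,1]$ into the dyadic pieces $(2^{-i-1},2^{-i}]$ for $i\geq 0$ and estimating the integrand on each such piece. First I would write
\begin{equation*}
W_{p(.),\vartheta}(A,x_0)=\sum_{i=0}^{\infty}\int_{2^{-i-1}}^{2^{-i}}\left(\frac{\text{cap}_{p(.),\vartheta}\left(A\cap B(x_0,r),B(x_0,2r)\right)}{\text{cap}_{p(.),\vartheta}\left(B(x_0,r),B(x_0,2r)\right)}\right)^{\frac{1}{p(x_0)-1}}\frac{dr}{r},
\end{equation*}
and observe that $\int_{2^{-i-1}}^{2^{-i}}\frac{dr}{r}=\ln 2$, so the whole problem reduces to showing that, for $r\in(2^{-i-1},2^{-i}]$, the ratio $\text{cap}_{p(.),\vartheta}\left(A\cap B(x_0,r),B(x_0,2r)\right)/\text{cap}_{p(.),\vartheta}\left(B(x_0,r),B(x_0,2r)\right)$ is comparable, with constants independent of $i$, to the $i$-th term of the Wiener sum, namely $\text{cap}_{p(.),\vartheta}\left(A\cap B(x_0,2^{-i}),B(x_0,2^{1-i})\right)/\text{cap}_{p(.),\vartheta}\left(B(x_0,2^{-i}),B(x_0,2^{1-i})\right)$.

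To get that comparison I would use monotonicity (property P2) together with the equivalences in Theorem \ref{eskap2} and Theorem \ref{eskap1}. For $r\in(2^{-i-1},2^{-i}]$ one has the nesting $B(x_0,2^{-i-1})\subset B(x_0,r)\subset B(x_0,2^{-i})$ and correspondingly $B(x_0,2^{-i})\subset B(x_0,2r)\subset B(x_0,2^{1-i})$, so that both the numerator and the denominator are squeezed between capacities evaluated at the consecutive dyadic radii $2^{-i-1},2^{-i},2^{1-i}$. Theorem \ref{eskap1} (applied with the role of $r$ played by the dyadic radius and $s$ ranging in $[r,2r]$) lets me replace the outer ball $B(x_0,2r)$ by $B(x_0,2^{1-i})$ or $B(x_0,2^{-i})$ up to a multiplicative constant $C$, while Theorem \ref{eskap2} gives $\text{cap}_{p(.),\vartheta}\left(B(x_0,\rho),B(x_0,2\rho)\right)\approx\mu_\vartheta\left(B(x_0,\rho)\right)$ for the denominator, and the doubling property of $\mu_\vartheta$ makes $\mu_\vartheta\left(B(x_0,2^{-i-1})\right)\approx\mu_\vartheta\left(B(x_0,2^{-i})\right)$. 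Raising to the fixed power $\frac{1}{p(x_0)-1}$ preserves all these two-sided bounds with a new constant, and since $x_0\notin A$ the set $A\cap B(x_0,r)$ is a genuine subset of the punctured ball so none of the capacities is forced to be $0$ or $\infty$ in a way that breaks the ratios (the degenerate cases where a term vanishes can be handled separately: a zero numerator term simply contributes zero to both sides).

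Summing the resulting uniform comparison over $i$ and absorbing the factor $\ln 2$ into the constants yields $C_1 W_{p(.),\vartheta}(A,x_0)\leq W^{sum}_{p(.),\vartheta}(A,x_0)\leq C_2 W_{p(.),\vartheta}(A,x_0)$, and the finiteness equivalence is then immediate. The main obstacle I anticipate is bookkeeping the hypotheses of Theorems \ref{eskap2} and \ref{eskap1}: those results require the relevant capacities to be $\geq 1$, so strictly speaking the clean equivalences only hold at scales where $\text{cap}_{p(.),\vartheta}\left(B(x_0,2^{-i}),B(x_0,2^{1-i})\right)\geq 1$ (and similarly for the other condenser). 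To cover the remaining small scales I would invoke the refinement indicated in Remark \ref{Trick} and the remark following Theorem \ref{eskap1}, which supply one-sided estimates of the same homogeneity even when the capacity is below $1$; threading these substitute inequalities through the dyadic comparison, while keeping track that the constants $C$, $c_d$, $c_1$ and the powers $r^{\pm p^{\pm}}$ stay uniformly bounded over the finitely many "bad" scales $2^{-i}$ near $1$, is the only genuinely delicate part. Everything else is the standard dyadic-decomposition argument for Wiener-type criteria as in \cite{Hei}.
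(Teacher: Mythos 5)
Your proposal follows essentially the same strategy as the paper's own proof: split the Wiener integral over the dyadic intervals $(2^{-i-1},2^{-i}]$, use the equivalences from Theorem \ref{eskap2} and Theorem \ref{eskap1} (together with monotonicity P2) to compare the ratio at scale $r$ to its dyadic counterpart, and then sum. One small caution: the scales where the $\geq 1$ hypotheses of those two theorems fail and where their constants $C_1,C_2,C$ degenerate are the \emph{small} scales $r\to 0$ (since $\mathrm{cap}_{p(.),\vartheta}(B(x_0,r),B(x_0,2r))\approx\mu_\vartheta(B(x_0,r))\to 0$ and the factors $r^{-p^{\pm}}$, $r^{1-p^{\pm}}$ blow up there), not "finitely many scales near $1$" as you wrote — a subtlety the paper itself also glosses over rather than resolves.
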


\begin{proof}
Using the same methods in the Theorem \ref{eskap1} and Theorem \ref{eskap2},
it is easy to see for $r\leq s\leq 2r$ that%
\begin{equation*}
\text{cap}_{p\left( .\right) ,\vartheta }\left( A\cap B\left( x_{0},r\right)
,B\left( x_{0},2r\right) \right) \approx \text{cap}_{p\left( .\right)
,\vartheta }\left( A\cap B\left( x_{0},r\right) ,B\left( x_{0},2s\right)
\right)
\end{equation*}%
and%
\begin{equation*}
\text{cap}_{p\left( .\right) ,\vartheta }\left( B\left( x_{0},r\right)
,B\left( x_{0},2r\right) \right) \approx \text{cap}_{p\left( .\right)
,\vartheta }\left( B\left( x_{0},s\right) ,B\left( x_{0},2s\right) \right)
\end{equation*}%
where the constants in $\approx $ depend on $r$,$p^{-},p^{+}$, constants of
doubling measure and Poincar\'{e} inequality. Thus for $2^{-1-i}\leq r\leq
2^{-i}$ we have%
\begin{eqnarray*}
&&\frac{\text{cap}_{p\left( .\right) ,\vartheta }\left( A\cap B\left(
x_{0},r\right) ,B\left( x_{0},2r\right) \right) }{\text{cap}_{p\left(
.\right) ,\vartheta }\left( B\left( x_{0},r\right) ,B\left( x_{0},2r\right)
\right) } \\
&\leq &C\frac{\text{cap}_{p\left( .\right) ,\vartheta }\left( A\cap B\left(
x_{0},2^{-i}\right) ,B\left( x_{0},2^{1-i}\right) \right) }{\text{cap}%
_{p\left( .\right) ,\vartheta }\left( B\left( x_{0},2^{-i}\right) ,B\left(
x_{0},2^{1-i}\right) \right) } \\
&\leq &C\frac{\text{cap}_{p\left( .\right) ,\vartheta }\left( A\cap B\left(
x_{0},2r\right) ,B\left( x_{0},4r\right) \right) }{\text{cap}_{p\left(
.\right) ,\vartheta }\left( B\left( x_{0},2r\right) ,B\left( x_{0},4r\right)
\right) }.
\end{eqnarray*}%
Hence we obtain that%
\begin{eqnarray*}
&&W_{p\left( .\right) ,\vartheta }\left( A,x_{0}\right) \\
&=&\tsum\limits_{i=0}^{\infty }\tint\limits_{2^{-1-i}}^{2^{-i}}\left( \frac{%
\text{cap}_{p\left( .\right) ,\vartheta }\left( A\cap B\left( x_{0},r\right)
,B\left( x_{0},2r\right) \right) }{\text{cap}_{p\left( .\right) ,\vartheta
}\left( B\left( x_{0},r\right) ,B\left( x_{0},2r\right) \right) }\right) ^{%
\frac{1}{p\left( x_{0}\right) -1}}\frac{dr}{r} \\
&\leq &C\tsum\limits_{i=0}^{\infty }\left( \frac{\text{cap}_{p\left(
.\right) ,\vartheta }\left( A\cap B\left( x_{0},2^{-i}\right) ,B\left(
x_{0},2^{1-i}\right) \right) }{\text{cap}_{p\left( .\right) ,\vartheta
}\left( B\left( x_{0},2^{-i}\right) ,B\left( x_{0},2^{1-i}\right) \right) }%
\right) ^{\frac{1}{p\left( x_{0}\right) -1}} \\
&=&CW_{p\left( .\right) ,\vartheta }^{sum}\left( A,x_{0}\right) .
\end{eqnarray*}%
In a similar way we find%
\begin{eqnarray*}
&&W_{p\left( .\right) ,\vartheta }^{sum}\left( A,x_{0}\right) \\
&\leq &\tsum\limits_{i=0}^{\infty }\tint\limits_{2^{-1-i}}^{2^{-i}}\left( 
\frac{\text{cap}_{p\left( .\right) ,\vartheta }\left( A\cap B\left(
x_{0},2^{-i}\right) ,B\left( x_{0},2^{1-i}\right) \right) }{\text{cap}%
_{p\left( .\right) ,\vartheta }\left( B\left( x_{0},2^{-i}\right) ,B\left(
x_{0},2^{1-i}\right) \right) }\right) ^{\frac{1}{p\left( x_{0}\right) -1}}%
\frac{dr}{r} \\
&\leq &C\tint\limits_{0}^{1}\left( \frac{\text{cap}_{p\left( .\right)
,\vartheta }\left( A\cap B\left( x_{0},2r\right) ,B\left( x_{0},4r\right)
\right) }{\text{cap}_{p\left( .\right) ,\vartheta }\left( B\left(
x_{0},2r\right) ,B\left( x_{0},4r\right) \right) }\right) ^{\frac{1}{p\left(
x_{0}\right) -1}}\frac{dr}{r} \\
&\leq &CW_{p\left( .\right) ,\vartheta }\left( A,x_{0}\right) .
\end{eqnarray*}%
This completes the proof.
\end{proof}

Theorem \ref{denklikthin} give us an equivalent claim for $\left( p\left(
.\right) ,\vartheta \right) $- thinness at $x_{0}.$

\begin{theorem}
Assume that $A\subset 
%TCIMACRO{\U{211d} }%
%BeginExpansion
\mathbb{R}
%EndExpansion
^{n}$ and $x_{0}\notin A$.

\begin{enumerate}
\item[\textit{(i)}] If $A$ is $\left( p\left( .\right) ,\vartheta \right) $-
thin at $x_{0},$ there exist an open neighborhood $U$ of $A$ such that $U$
is $\left( p\left( .\right) ,\vartheta \right) $- thin at $x_{0}.$

\item[\textit{(ii)}] If $A$ is a Borel set and $\left( p\left( .\right)
,\vartheta \right) $- thick at $x_{0},$ there exist a compact set $K\subset
A\cup \left\{ x_{0}\right\} $ such that $K$ is $\left( p\left( .\right)
,\vartheta \right) $- thick at $x_{0}.$
\end{enumerate}
\end{theorem}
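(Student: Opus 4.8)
The plan is to exploit the countable subadditivity (P7), monotonicity (P2), and the Choquet capacitability from Corollary~\ref{Chouquet}, together with the Wiener-sum characterization of Theorem~\ref{denklikthin}, to reduce both statements to estimates on the dyadic annuli $B(x_0,2^{-i})$. Since $x_0\notin A$, for each $i$ the set $A\cap B(x_0,2^{-i})$ stays away from $x_0$ and the quantities $\mathrm{cap}_{p(.),\vartheta}\!\left(A\cap B(x_0,2^{-i}),B(x_0,2^{1-i})\right)$ are the building blocks of $W^{sum}_{p(.),\vartheta}(A,x_0)$.

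For part (i): assume $A$ is $(p(.),\vartheta)$-thin, so by Theorem~\ref{denklikthin} the sum $W^{sum}_{p(.),\vartheta}(A,x_0)=\sum_i \left(\frac{\mathrm{cap}_{p(.),\vartheta}(A\cap B(x_0,2^{-i}),B(x_0,2^{1-i}))}{\mathrm{cap}_{p(.),\vartheta}(B(x_0,2^{-i}),B(x_0,2^{1-i}))}\right)^{1/(p(x_0)-1)}$ converges. First I would fix a summable sequence $\varepsilon_i>0$ (e.g. $\varepsilon_i=2^{-i}\varepsilon$) and use property P3 to choose, for each $i$, an open set $V_i$ with $A\cap B(x_0,2^{-i})\subset V_i\subset B(x_0,2^{1-i})$ and $\mathrm{cap}_{p(.),\vartheta}(V_i,B(x_0,2^{1-i}))\le \mathrm{cap}_{p(.),\vartheta}(A\cap B(x_0,2^{-i}),B(x_0,2^{1-i}))+\varepsilon_i^{\,p(x_0)-1}$ (with the usual reduction if the capacity is already $0$). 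I would then set $U=\bigcup_i \bigl(V_i\cap B(x_0,2^{-i})\bigr)$, which is open, contains $A$, and excludes $x_0$. For the Wiener sum of $U$: by monotonicity, $U\cap B(x_0,2^{-j})\subset \bigcup_{i\ge j}\bigl(V_i\cap B(x_0,2^{-i})\bigr)$, and applying P2 and P7 one bounds $\mathrm{cap}_{p(.),\vartheta}(U\cap B(x_0,2^{-j}),B(x_0,2^{1-j}))$ by $\sum_{i\ge j}\mathrm{cap}_{p(.),\vartheta}(V_i\cap B(x_0,2^{-i}),B(x_0,2^{1-j}))$; using Theorem~\ref{eskap1} to transfer each term back to the scale $B(x_0,2^{1-i})$ (the hypothesis $x_0\notin A$ keeps the relevant radii comparable in a way that makes the geometric decay across scales usable), one sums the resulting double series and invokes the elementary fact that $(\sum a_i)^{1/(p(x_0)-1)}\le C\sum a_i^{1/(p(x_0)-1)}$ when $p(x_0)-1\le 1$, or works directly with the exponent otherwise. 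The point is that the error terms $\varepsilon_i$ and the overlap between scales contribute only a finite amount, so $W^{sum}_{p(.),\vartheta}(U,x_0)<\infty$ and hence $U$ is thin.

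For part (ii): assume $A$ is Borel and thick, so by Theorem~\ref{denklikthin} $W^{sum}_{p(.),\vartheta}(A,x_0)=\infty$. By Corollary~\ref{Chouquet} each $A\cap B(x_0,2^{-i})$ is capacitable, so there is a compact $K_i\subset A\cap B(x_0,2^{-i})$ with $\mathrm{cap}_{p(.),\vartheta}(K_i,B(x_0,2^{1-i}))\ge \tfrac12\,\mathrm{cap}_{p(.),\vartheta}(A\cap B(x_0,2^{-i}),B(x_0,2^{1-i}))$ (or $\ge M$ for a large $M$ if the capacity is $+\infty$). I would then set $K=\{x_0\}\cup\bigcup_i K_i$; one checks this is compact because $K_i\subset B(x_0,2^{-i})$ forces any accumulation only at $x_0$, which is included, and $K\subset A\cup\{x_0\}$. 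Finally $K\cap B(x_0,2^{-i})\supset K_i$, so $\mathrm{cap}_{p(.),\vartheta}(K\cap B(x_0,2^{-i}),B(x_0,2^{1-i}))\ge \mathrm{cap}_{p(.),\vartheta}(K_i,B(x_0,2^{1-i}))\ge \tfrac12\,\mathrm{cap}_{p(.),\vartheta}(A\cap B(x_0,2^{-i}),B(x_0,2^{1-i}))$, and dividing by the common denominator and raising to the power $1/(p(x_0)-1)$ gives a term-by-term comparison showing $W^{sum}_{p(.),\vartheta}(K,x_0)\gtrsim W^{sum}_{p(.),\vartheta}(A,x_0)=\infty$; by Theorem~\ref{denklikthin} again, $K$ is thick.

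The main obstacle I expect is in part (i): controlling the Wiener sum of the union $U$ when passing from the sets $V_i$ (each living at its own dyadic scale) to the truncations $U\cap B(x_0,2^{-j})$, because subadditivity across scales produces a double sum that must be shown to converge. This requires the scale-change estimates of Theorem~\ref{eskap1} and Theorem~\ref{eskap2} — more precisely a telescoping/geometric-decay argument showing that the contribution of $V_i$ to scale $j<i$ is damped by a factor depending on $2^{-(i-j)}$ (using the doubling property of $\mu_\vartheta$ and the hypotheses $\mathrm{cap}\ge 1$ on the relevant balls) — so that the nested sums can be rearranged and bounded by $C\,W^{sum}_{p(.),\vartheta}(A,x_0)+C\sum_i\varepsilon_i<\infty$. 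Part (ii) is comparatively routine once capacitability is invoked. The compactness of $K=\{x_0\}\cup\bigcup K_i$ in both parts, and the legitimacy of dropping finitely many small terms, are the kinds of bookkeeping points that need a line each but no real difficulty.
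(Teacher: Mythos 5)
Part \textit{(ii)} of your proposal is essentially the paper's argument: by Corollary~\ref{Chouquet} each $A\cap B(x_{0},2^{-i})$ is capacitable, one extracts a compact $K_{i}$ nearly realizing the capacity at that scale (the paper uses an additive error $2^{-i}$ instead of a multiplicative factor $1/2$, but either makes the term-by-term comparison work), and $K=\{x_{0}\}\cup\bigcup_{i}K_{i}$ is compact since $K_{i}\subset B(x_{0},2^{-i})$ forces accumulation only at $x_{0}$. This part is fine.

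Part \textit{(i)} has a genuine gap, and you correctly sense where: the passage from the individual $V_{i}$'s to the Wiener sum of their union $U$. But the specific inclusion you assert,
\begin{equation*}
U\cap B\!\left(x_{0},2^{-j}\right)\subset\bigcup_{i\ge j}\left(V_{i}\cap B\!\left(x_{0},2^{-i}\right)\right),
\end{equation*}
is false: for $i<j$ the piece $V_{i}\cap B(x_{0},2^{-i})$ can perfectly well intersect $B(x_{0},2^{-j})$ (nothing in your construction keeps $V_{i}$ away from $x_{0}$, and $A$ may accumulate at $x_{0}$ even though $x_{0}\notin A$). Even granting the inclusion, you are left with a genuine double sum, and Theorem~\ref{eskap1} only compares the two enclosing balls across a single dyadic step ($r\le s\le 2r$), so it cannot transport $\mathrm{cap}_{p(.),\vartheta}(\cdot,B(x_{0},2^{1-j}))$ to scale $2^{1-i}$ when $i-j$ is large; no geometric decay across scales is actually available from the cited results. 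For the same reason your claim that $U$ ``excludes $x_{0}$'' is not justified.

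The paper sidesteps all of this with a structural trick you are missing. Writing $B_{i}=B(x_{0},2^{1-i})$, it chooses the approximating open sets $U_{i}\supset A\cap B_{i}$ to be \emph{nested}, requiring $U_{i}\subset B_{i}\cap U_{i-1}$, and then defines
\begin{equation*}
U=\bigcup_{i=0}^{\infty}\left(U_{i}\setminus\overline{B_{i+1}}\right).
\end{equation*}
The annulus decomposition removes a neighborhood of $x_{0}$ from each piece, so $x_{0}\notin U$; it preserves $A\subset U$ after the harmless reduction $A\cap\partial B_{i}=\emptyset$; and — crucially — the nesting gives $U\cap B_{i+1}\subset U_{i+1}$ as a \emph{single} set, so $\mathrm{cap}_{p(.),\vartheta}(U\cap B_{i+1},B_{i})\le\mathrm{cap}_{p(.),\vartheta}(U_{i+1},B_{i})$ directly by monotonicity (P2), with no subadditivity across scales and no use of Theorem~\ref{eskap1} or Theorem~\ref{eskap2} at all. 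The Wiener sum of $U$ is then bounded term-by-term by that of $A$ plus $\sum 2^{-i-1}\le 1$. Without the nesting $U_{i}\subset U_{i-1}$ and the excision of $\overline{B_{i+1}}$, the double-sum obstacle you identify is real and your sketch does not overcome it.
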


\begin{proof}
Firstly we denote $B_{i}=B\left( x_{0},2^{1-i}\right) .$ Assume that $V_{1}$
and $V_{2}$ are $\left( p\left( .\right) ,\vartheta \right) $- thin at $%
x_{0}.$ By the subadditivity property of relative $\left( p\left( .\right)
,\vartheta \right) $- capacity, it is clear that $V_{1}\cup V_{2}$ is $%
\left( p\left( .\right) ,\vartheta \right) $- thin at $x_{0}.$ Since $%
x_{0}\notin A$ and $x_{0}$ is centers of the balls $B_{i}$ for each $i,$ we
may assume that $A\cap \partial B_{i}=\emptyset .$ Moreover, let $U_{0}=%
%TCIMACRO{\U{211d} }%
%BeginExpansion
\mathbb{R}
%EndExpansion
^{n}$ and for each $i=1,2,...$ take an open set $U_{i}\subset B_{i}\cap
U_{i-1}$ such that $A_{i}=A\cap B_{i}\subset U_{i}$ and that%
\begin{equation*}
\left( \frac{\text{cap}_{p\left( .\right) ,\vartheta }\left(
U_{i},B_{i-1}\right) }{\text{cap}_{p\left( .\right) ,\vartheta }\left(
B_{i},B_{i-1}\right) }\right) ^{\frac{1}{p\left( x_{0}\right) -1}}\leq
\left( \frac{\text{cap}_{p\left( .\right) ,\vartheta }\left(
A_{i},B_{i-1}\right) }{\text{cap}_{p\left( .\right) ,\vartheta }\left(
B_{i},B_{i-1}\right) }\right) ^{\frac{1}{p\left( x_{0}\right) -1}}+2^{-i-1}.
\end{equation*}%
Let us denote $U=\dbigcup\limits_{i=0}^{\infty }\left( U_{i}-\overline{%
B_{i+1}}\right) .$ Then we obtain that $A\subset U,$ $U$ is open, and%
\begin{eqnarray*}
W_{p\left( .\right) ,\vartheta }^{sum}\left( U,x_{0}\right) &\leq
&\dsum\limits_{i=0}^{\infty }\left( \frac{\text{cap}_{p\left( .\right)
,\vartheta }\left( U_{i},B_{i-1}\right) }{\text{cap}_{p\left( .\right)
,\vartheta }\left( B_{i},B_{i-1}\right) }\right) ^{\frac{1}{p\left(
x_{0}\right) -1}} \\
&\leq &W_{p\left( .\right) ,\vartheta }^{sum}\left( A,x_{0}\right) +1<\infty
.
\end{eqnarray*}%
This completes the proof of \textit{(i)} because of the fact that $U$ is the
desired neighborhood of $A.$

Now we consider the proof of \textit{(ii)}. Again we denote $B_{i}=B\left(
x_{0},2^{1-i}\right) .$ Since the sets $A\cap B_{i}$ are Borel%
\begin{equation*}
\text{cap}_{p\left( .\right) ,\vartheta }\left( A\cap B_{i},B_{i-1}\right)
=\sup_{\substack{ K\subset A\cap B_{i}  \\ compact}}\text{cap}_{p\left(
.\right) ,\vartheta }\left( K,B_{i-1}\right)
\end{equation*}%
for all $i\in 
%TCIMACRO{\U{2115} }%
%BeginExpansion
\mathbb{N}
%EndExpansion
.$ For each $i$ take a compact $K_{i}\subset A\cap B_{i}$ such that%
\begin{equation*}
\left( \frac{\text{cap}_{p\left( .\right) ,\vartheta }\left(
A_{i},B_{i-1}\right) }{\text{cap}_{p\left( .\right) ,\vartheta }\left(
B_{i},B_{i-1}\right) }\right) ^{\frac{1}{p\left( x_{0}\right) -1}}\leq
\left( \frac{\text{cap}_{p\left( .\right) ,\vartheta }\left(
K_{i},B_{i-1}\right) }{\text{cap}_{p\left( .\right) ,\vartheta }\left(
B_{i},B_{i-1}\right) }\right) ^{\frac{1}{p\left( x_{0}\right) -1}}+2^{-i}.
\end{equation*}%
Hence $K=\dbigcup\limits_{i=0}^{\infty }K_{i}\cup \left\{ x_{0}\right\} $ is
the desired compact set.
\end{proof}

\end{document}